\definecolor{rose}{rgb}{0.82, 0.1, 0.26}
 	\definecolor{blu}{rgb}{0.36, 0.54, 0.66}
 	\definecolor{mor}{rgb}{0.55, 0.0, 0.55}
\theoremstyle{plain}
\newtheorem{theorem}{Theorem}[section]
\newtheorem{thm}[theorem]{Theorem}
\newtheorem{cor}[theorem]{Corollary}
\newtheorem{lem}[theorem]{Lemma}
\newtheorem{prop}[theorem]{Proposition}
\newtheorem{defi}[theorem]{Definition}
\theoremstyle{definition}
\newtheorem{rmk}[theorem]{Remark}
\newtheorem*{thm1}{Theorem A}
\newtheorem*{thm2}{Theorem B}
\newtheorem*{coro}{Corollary}
\theoremstyle{remark}
\newcommand{\PP}{\mathds{P}}
\newcommand{\PPP}{\PP^{3}}
\newcommand{\cc}{\mathscr{C}}
\newcommand{\ccc}{\mathscr{C}^{h}}
\newcommand{\hh}{\mathscr{H}}
\newcommand{\aaa}{\mathscr{A}}
\newcommand{\rr}{\mathscr{R}}
\newcommand{\oc}{\overline{\mathscr{C}_{3}}}
\newcommand{\ii}{\mathscr{I}}
\title{Liaison theory and the birational geometry of the Hilbert scheme of curves in $\PPP$}
\author{ Montserrat Vite}
\address{Instituto de Matem\'aticas, Universidad Nacional Aut\'onoma de M\'exico, Oaxaca de Ju\'arez, M\'exico}
\email{montserrat@matmor.unam.mx} 
\begin{document}

\begin{abstract}
  In the Hilbert scheme of curves of degree $d_{r}=\frac{r(r+1)}{2}$ and arithmetic genus $g_{r}=\frac{r(r+1)(2r-5)}{6}+1$ in $\PPP$ we prove that there exists a unique component of arithmetically Cohen-Macaulay curves denoted by $\overline{\cc_{r}}$. For $r\geq 3$, we verify that the subvariety of curves in $\overline{\cc_{r}}$ with Rao module of rank one always contains a reducible divisor. In particular, in the case of curves of degree $6$ and genus $3$ we prove that this subvariety is a reducible divisor. Furthermore, the components of such divisor are linearly independent and each component generates an extremal ray of the effective cone $\overline{\text{Eff}(\oc)}$. 
\end{abstract}

\maketitle

 \tableofcontents
\section*{Introduction}

Two curves in $\PPP$ are \textit{geometrically directly linked} if their union is the complete intersection of two surfaces. This definition can be extended to define an equivalence relation, \textit{linkage equivalence} (for more details see  \cite{mig2}). In the 1940’s Apery and Gaeta showed that a smooth curve $C$ in $\PPP$ is in the linkage class of a complete intersection if and only if it is \textit{arithmetically Cohen-Macaulay (ACM for short)} (cf. \cite{apery} and \cite{gaeta}). This result was extended to arbitrary codimension two subschemes of projective space by Peskine and Szpiro \cite{pesk}, putting the theory of liaison into the framework of scheme theory. We are interested in studying families related by links, in particular, we ask ourselves:
\begin{center}
    \textit{How do the linkage classes behave in (flat) families?}
\end{center}

We study this question, and exhibit families of curves in the linkage class of two disjoint lines as limits of ACM curves. We can phrase our first result as follows:

\begin{thm1}[Corollary \ref{corP}] 
Let $\overline{\cc_{r}}$ be the closure of the family of curves in the linkage class of a line in the Hilbert scheme of curves of degree $d_{r}=\frac{r(r+1)}{2}$ and genus $g_{r}=\frac{r(r+1)(2r-5)}{6}+1$, then $\overline{\cc_{r}}$ is irreducible and has two irreducible divisors, each parametrizes curves in the linkage class of two disjoint lines.
\end{thm1}

This is not obvious since the Hilbert scheme is reducible, which constitutes a major obstacle this paper had to overcome. Furthermore, it is the first result that uses Liaison Theory in describing effective divisors of the Hilbert scheme of curves in projective 3-space, as far as we know. This Theorem is an effective instance of a result by Ballico and Bolondi (cf. \cite{babo}*{Corollary 2.3}).

There is a further reason that attracted our attention to our previous theorem: its implications in birational geometry.

Indeed, consider the following two families of 1-dimensional sub schemes of degree 6 and arithmetic genus 3: the family of hyperelliptic curves of degree $6$, denoted by $\ccc$, and the family of curves that parameterize the union of a plane quartic with two incident skew lines, denoted by $\aaa$. The closure of these two families are the divisors of the theorem A on $\oc$. The component $\oc$ may not be normal, thus, we have to take the normalzation of $\oc$, denoted by $\mathscr{B}$, that contains all the locally Cohen-Macaulay curves, in particular, the families $\overline{\ccc}$ and $\overline{\aaa}$. In this case, if $N^{1}(\mathscr{B})$ is the space of divisors of $\mathscr{B}$ modulo numerical equivalence, then we prove:

\begin{thm2}[Theorem \ref{li}] \label{intro1}
The classes of $\overline{\aaa}$ and $\overline{\ccc}$ in $N^{1}(\mathscr{B})$ are linearly independent and each of them spans an extremal ray of the cone of effective divisors: \[ \overline{\text{Eff}(\mathscr{B})}\subseteq N^1(\mathscr{B}).\]
\end{thm2}

As a corollary, we compute the dimension of the space $N^{1}(\mathscr{B})$:
\begin{coro}[Corollary \ref{T1}]
    The dimension of the vector space $N^{1}(\mathscr{B})$ is $3$.
\end{coro}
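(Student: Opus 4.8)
The plan is to estimate $\dim N^{1}(\mathscr{B})$ from both sides, separating $\mathscr{B}$ into the dense open subset $U$ parametrizing arithmetically Cohen--Macaulay curves and its complement. Since $\mathscr{B}$ is normal, $\mathrm{Cl}(\mathscr{B})\to\mathrm{Cl}(U)$ is surjective with kernel generated by the finitely many prime divisors of $\mathscr{B}$ contained in $\mathscr{B}\setminus U$; as $N^{1}$ is a quotient of $\mathrm{Cl}\otimes\QQ$, this gives $\dim N^{1}(\mathscr{B})\le\dim N^{1}(U)+\#\{\text{prime divisors of }\mathscr{B}\text{ inside }\mathscr{B}\setminus U\}$. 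The argument then reduces to: (i) $\dim N^{1}(U)=1$; and (ii) the divisorial part of $\mathscr{B}\setminus U$ is exactly $\overline{\aaa}\cup\overline{\ccc}$. Granting these, $\dim N^{1}(\mathscr{B})\le 3$; for the reverse inequality I would observe that $\overline{\aaa}$ and $\overline{\ccc}$ are supported off $U$, so the (surjective) restriction $N^{1}(\mathscr{B})\to N^{1}(U)$ annihilates both of their classes, while by Theorem \ref{li} these classes are linearly independent --- hence $\dim N^{1}(\mathscr{B})\ge\dim\langle[\overline{\aaa}],[\overline{\ccc}]\rangle+\dim N^{1}(U)\ge 2+1=3$.

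For (i) I would invoke the Hilbert--Burch presentation of the generic element of $\oc$. Every arithmetically Cohen--Macaulay curve of degree $6$ and genus $3$ has $h$-vector $(1,2,3)$, so its homogeneous ideal is minimally generated by four cubics with linear first syzygies, i.e.\ it is the zero locus of the maximal minors of a $3\times 4$ matrix of linear forms on $\PPP$. This presents $U$ as the geometric quotient of the open subset $V$ of ``honest'' such matrices in the $48$-dimensional affine space of all of them by the group $G=(\mathrm{GL}_{3}\times\mathrm{GL}_{4})/\CC^{*}$ of row and column operations modulo the scalars acting trivially; the count $48-(9+16-1)=24=\dim\oc$ confirms the quotient is birational onto $\oc$. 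As the bad locus has codimension $\ge 2$, the affine open $V$ satisfies $\mathrm{Pic}(V)=0$, so $\mathrm{Pic}(U)$ injects into $\mathrm{Pic}^{G}(V)=\widehat{G}$; and $\widehat{G}$ has rank $1$ (the characters $\det_{3}^{a}\det_{4}^{b}$ of $\mathrm{GL}_{3}\times\mathrm{GL}_{4}$ modulo those trivial on the central $\CC^{*}$, i.e.\ modulo $3a=4b$). Thus $\dim N^{1}(U)\le 1$, with equality because the tautological class $p_{*}(q^{*}H\cdot q^{*}H)$ --- for $p\colon\mathcal{C}\to\mathscr{B}$ the universal curve, $q\colon\mathcal{C}\to\PPP$, and $H$ a hyperplane --- has strictly positive degree on a general pencil of determinantal sextics, hence restricts to a nonzero class on $U$.

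For (ii) I would show that the only codimension-one degenerations of a determinantal sextic inside $\mathscr{B}$ are the non-arithmetically-Cohen--Macaulay ones, and that these fill precisely the two divisors of Corollary \ref{corP}: the closure $\overline{\ccc}$ of the hyperelliptic sextics and the closure $\overline{\aaa}$ of the plane-quartic-plus-two-skew-lines curves, both with Rao module of rank one. Indeed, a non-ACM curve in $\oc$ cannot be the minors of a single $3\times 4$ matrix (its minimal free resolution has length three), so it genuinely lies in $\mathscr{B}\setminus U$; and every remaining ``special but still ACM'' behaviour --- a matrix whose rows acquire a common zero (forcing a planar or lower-degree subcurve), a sextic lying on a quadric, a non-reduced determinantal scheme --- is cut out by at least two independent conditions on the matrix, hence has codimension $\ge 2$ in $\mathscr{B}$. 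Combined with (i) and the lower bound, this gives $\dim N^{1}(\mathscr{B})=3$.

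The main obstacle is the boundary analysis in (ii): one must prove there is \emph{no third} codimension-one stratum in $\mathscr{B}$, i.e.\ that away from $\overline{\aaa}\cup\overline{\ccc}$ every degenerate determinantal locus has codimension at least two, which requires a careful case analysis of rank-drop and common-factor phenomena for $3\times 4$ matrices of linear forms and uses the complete list of divisorial degenerations behind Corollary \ref{corP}. A second, more technical, point is to verify that $U$ is genuinely a geometric quotient $V/G$ (so that $\mathrm{Pic}(U)\hookrightarrow\mathrm{Pic}^{G}(V)$), that the bad locus has codimension $\ge 2$, that passing to the normalization creates no new divisor class, and that the test pencil used to detect $N^{1}(U)\ne 0$ is transverse to the tautological divisor.
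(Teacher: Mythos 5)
Your overall skeleton coincides with the paper's: bound $\dim N^{1}(\mathscr{B})$ above by $\dim N^{1}$ of the ACM locus plus the number of boundary prime divisors, and below by the linear independence of $[\overline{\aaa}]$ and $[\overline{\ccc}]$ (Theorem \ref{li}) together with one class that is numerically nontrivial on a complete curve contained in the ACM locus. Where you genuinely differ is in your step (i): the paper's Proposition \ref{proppic} obtains $\dim N^{1}(\cc_{3})=1$ from the liaison incidence correspondences $W_{r}$, which are (open subsets of) Grassmannian bundles, descending all the way to $\cc_{1}=\mathds{G}(1,3)$, whereas you use Hilbert--Burch to present the ACM locus as a quotient of an open set $V$ of $3\times 4$ matrices of linear forms by $(\mathrm{GL}_{3}\times\mathrm{GL}_{4})/\CC^{*}$ and bound $\mathrm{Pic}$ by the rank-one character group. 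That is a legitimate alternative route, at the price of the verifications you yourself list (geometric quotient, codimension $\geq 2$ of the degenerate matrices, units on $V$ constant); your general pencil of matrices plays the role of the paper's curve $\delta$ built from ``flexible'' curves through eleven general points plus a point moving on a line, and your kernel-plus-image count replaces the paper's explicit triangular intersection table against the classes $\alpha,\beta,\gamma$. (Note that you do not need surjectivity of $N^{1}(\mathscr{B})\to N^{1}(U)$, which is not obvious; it suffices that restriction kills $[\overline{\aaa}]$ and $[\overline{\ccc}]$ and is nonzero on your tautological class.)

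The genuine gap is your step (ii), which you single out as the main obstacle. In the paper there is no such obstacle, because $\mathscr{B}$ is \emph{defined} as the normalization of $\cc_{3}\cup\overline{\aaa}\cup\overline{\ccc}$; the boundary of the ACM locus in $\mathscr{B}$ therefore has no divisorial components other than (the preimages of) $\overline{\aaa}$ and $\overline{\ccc}$, and the substantive input is Lemma \ref{union}, i.e.\ Amrane's classification of the Rao modules of locally Cohen--Macaulay sextics of genus three \cite{am1}, which is what makes this $\mathscr{B}$ a sensible partial compactification of $\cc_{3}$ (the paper explicitly refrains from claiming anything about $N^{1}(\oc)$ for exactly this reason). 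If instead you intend to prove for $\oc$ itself that there is no third divisorial stratum, the method you sketch---codimension counts of rank-drop and common-factor loci of $3\times 4$ matrices---cannot reach it: the non-ACM boundary curves are not the minors of any limiting matrix (their minimal free resolutions have length three), so they occur precisely where the rational map from matrix space to the Hilbert scheme is undefined, and bounding the codimension of bad matrices in $V$ says nothing about the dimension of the strata swept out after resolving that indeterminacy. Identifying the divisorial non-ACM strata requires cohomological input (Rao modules, characters of postulation), which is exactly what Lemma \ref{union} supplies. So either you invoke the definition of $\mathscr{B}$ together with Lemma \ref{union}---in which case (ii) is immediate and your proof closes along essentially the paper's lines---or the argument as written has a hole at its self-declared crucial step.
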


In general, these divisors allow us to give a lower bound for the dimension of the vector space of divisors of $\overline{\cc_{r}}$ modulo numerical equivalence:
\begin{coro}
    For every $r\geq 4$ we have that
    $$dim\,N^{1}(\overline{\cc_{r}})\geq 2.$$
\end{coro}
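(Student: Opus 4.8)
The plan is to reduce the bound to the non-proportionality of two explicit effective divisor classes. By Theorem~A (Corollary~\ref{corP}), $\overline{\cc_{r}}$ is irreducible and carries two irreducible divisors $\Delta_{1},\Delta_{2}$, namely the two divisors of curves lying in the linkage class of two disjoint lines, i.e.\ with Rao module of rank one. Since $\dim N^{1}(\overline{\cc_{r}})$ is at least the number of $\RR$-linearly independent divisor classes on $\overline{\cc_{r}}$, it suffices to prove that $[\Delta_{1}]$ and $[\Delta_{2}]$ are independent in $N^{1}(\overline{\cc_{r}})$. Because both are effective and nonzero, any numerical relation between them must be of the form $[\Delta_{1}]\equiv c\,[\Delta_{2}]$ with $c>0$: intersecting a relation $a[\Delta_{1}]+b[\Delta_{2}]\equiv 0$ with a fixed ample class shows that $a$ and $b$ have opposite signs. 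Hence it is enough to exhibit a single irreducible curve $B\subset\overline{\cc_{r}}$ on which $\Delta_{1}\cdot B$ and $\Delta_{2}\cdot B$ are not positive multiples of one another, for example one with $\Delta_{1}\cdot B<0\le\Delta_{2}\cdot B$.

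To build such a curve I would, following the proof of Theorem~\ref{li} in the case $r=3$, construct for each $i$ a one-parameter family $B_{i}\subset\overline{\cc_{r}}$ whose general member lies on $\Delta_{i}$ but not on $\Delta_{j}$ for $j\neq i$. A natural source of such families is liaison itself: fix a chain of basic double links starting from a pair of disjoint lines that produces the generic curve of $\Delta_{i}$, and let one linking surface (or a residual plane curve) vary in a pencil, so that the generic member of $B_{i}$ remains in $\Delta_{i}$ while only finitely many, controlled, members degenerate further. For such a family one has $\Delta_{j}\cdot B_{i}\ge 0$ automatically, since $B_{i}$ meets the effective divisor $\Delta_{j}$ in finitely many points (working, if necessary, on a normal, $\QQ$-factorial model of $\overline{\cc_{r}}$, as was done for $\oc$ by passing to $\mathscr{B}$); the essential point is the strict inequality $\Delta_{i}\cdot B_{i}<0$, which should follow from a normal-bundle computation showing that the curves of $B_{i}$ admit no first-order deformation out of $\Delta_{i}$ in the relevant directions. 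The inequality $\Delta_{1}\cdot B_{1}<0\le\Delta_{2}\cdot B_{1}$ already contradicts $[\Delta_{1}]\equiv c\,[\Delta_{2}]$ with $c>0$, so $\dim N^{1}(\overline{\cc_{r}})\ge 2$; and having both $B_{1}$ and $B_{2}$ with $\Delta_{i}\cdot B_{i}<0$ shows in addition that each $\Delta_{i}$ spans an extremal ray of $\overline{\text{Eff}}(\overline{\cc_{r}})$, extending to all $r$ the statement proved for $r=3$ in Theorem~\ref{li}.

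The main obstacle I anticipate is to carry out this construction uniformly in $r$. The curves parametrized by $\overline{\cc_{r}}$ grow in complexity with $r$, so one needs a chain of basic double links and a normal-bundle computation that can be controlled for all $r$ at once; moreover, because the ambient Hilbert scheme is reducible, one must verify that the families $B_{i}$ stay inside the single component $\overline{\cc_{r}}$, and that their general members genuinely avoid $\Delta_{j}$ so that the inequality $\Delta_{j}\cdot B_{i}\ge 0$ is a real constraint rather than vacuous. A further technical point is the possible non-normality, or non-$\QQ$-factoriality, of $\overline{\cc_{r}}$: as with $\oc$ one may need to pass to an appropriate model to make the intersection pairing and the notion of extremal ray well defined, and then check that independence there descends to the stated bound for $\overline{\cc_{r}}$. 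If a uniform normal-bundle argument turns out to be intractable, a fallback is to use liaison to produce a dominant rational map from a subvariety of $\overline{\cc_{r}}$ onto a parameter space on which $\Delta_{1}$ and $\Delta_{2}$ restrict to classes already known to be independent — in particular, reducing to the $r=3$ picture of Theorem~\ref{li} — and to pull the independence back.
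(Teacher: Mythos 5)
Your reduction is to the linear independence of the two boundary divisors, and the crux of your plan --- an irreducible curve $B\subset\Delta_{1}$ with $\Delta_{1}\cdot B<0$ --- is exactly the point that is never established. You say it ``should follow from a normal-bundle computation,'' but no such computation is given, and rigidity of the parametrized space curves inside $\Delta_{1}$ (``no first-order deformation out of $\Delta_{i}$'') is not the same as negativity of the intersection number: to compute $\Delta_{1}\cdot B$ you need to control $\mathcal{O}(\Delta_{1})|_{\Delta_{1}}$, hence in particular to know that these boundary families define ($\QQ$-)Cartier classes on $\overline{\cc_{r}}$ or on a suitable model, none of which is known. This is precisely where the paper stops: for $r=3$ the negativity/extremality is extracted from the explicit birational map $h$ to $\mathds{G}(3,19)$ (Lemmas \ref{lema31} and \ref{lema32}, Theorem \ref{li}), and the paper states explicitly that for $r\geq 4$ it does not know whether $h_{r}$ is defined in codimension one nor which divisors it contracts. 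Your fallback does not repair this: a dominant rational map from a subvariety of $\overline{\cc_{r}}$ onto the $r=3$ picture induces no pullback on $N^{1}(\overline{\cc_{r}})$, and independence of classes neither restricts to nor lifts from a subvariety. So as written the proposal targets a statement (extremality of both boundary rays for all $r$) strictly stronger than the corollary, which the paper itself leaves open, and it does not prove the corollary.

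Note also that the bound $\dim N^{1}(\overline{\cc_{r}})\geq 2$ does not require independence of the two boundary divisors. The paper states this corollary in the introduction without a separate proof, as a consequence of Theorem \ref{corP}, and the cheap route is the one already used in Corollary \ref{T1}: take a single effective boundary divisor, say $D=\overline{\mathscr{L}^{r-3}\ccc}$, which has codimension one in $\overline{\cc_{r}}$ by Proposition \ref{dimch}, Corollary \ref{corant} and Theorem \ref{corP}, together with the incidence divisor $H$ of curves meeting a fixed line. A complete one-parameter family of ACM curves that avoids the non-ACM locus (the analogue, for $\cc_{r}$, of the interpolation construction with flexible curves in the proof of Corollary \ref{T1}) pairs to $0$ with $D$ and positively with $H$, while a curve meeting $D$ properly at general points pairs positively with $D$; a relation $a[D]+b[H]\equiv 0$ then forces $b=0$ and afterwards $a=0$, so $[D]$ and $[H]$ are independent and $\dim N^{1}\geq 2$ with no contraction or extremality input. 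If you want a complete argument, either carry out this interpolation step uniformly in $r$ (and handle normality/$\QQ$-factoriality by passing to a normalization, as the paper does with $\mathscr{B}$ for $r=3$), or genuinely produce the negative curve class on a boundary divisor --- which at present is not known for any $r\geq 4$.
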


Theorem A is proved using basic notions of liaison theory and the theory developed in \cite{mdp2}. On the other hand, to prove Theorem B we construct a birational map that contracts both families.

\textbf{Structure of the paper.} The first section presents the preliminaries of Liaison Theory, the definitions of the objects that we study throughout and verify that there exists a unique component of ACM curves in $\hh_{r}$. Section 2 is devoted to the case of curves of degree $6$ and genus $3$ and focuses on the main component. In  this section, we prove  Theorem B. The last two sections are dedicated to the study of the general case; in particular, in section 4 we prove Theorem A.

\textbf{Acknowledgements.} I am grateful to my advisor C\'esar Lozano Huerta for supporting and guiding me through all these years.  Thanks also to Manuel Leal for all
the useful conversations and more. I would like to thank Dra. Leticia Brambila and Dr. Abel Castorena for their useful comments on the first version of this paper. Special thanks to Robin Hartshorne whose talk at the Seminario Nacional de Geometría Algebraica helped me shape this work. I am also grateful to the Pontificia Universidad Católica de Chile, and especially to Mauricio Bustamante, for their hospitality. Finally, thanks to CONAHCYT for financial support, and IMUNAM for supporting me throughout my PhD. \\

\textbf{Notation.} Throughout this paper, $k$ is an algebraically closed  field of characteristic zero. All varieties and subschemes will be assumed to be projective. We shall
denote by $S$ the homogeneous polynomial ring $k[x,y,z,w]$ and we let $\PP^{3}:=\PP^{3}_{k}=Proj(S)$  stand for the projective $3$-space. In this work, by \textit{curve} we mean a one-dimensional closed subscheme of $\PPP$ that is locally Cohen-Macaulay (lcm). These are closed subschemes of dimension one that may be reducible and non-reduced but that have no isolated or embedded points.

\section{Preliminaries}
In this Section, we review the main results from Liaison Theory that we use and establish the notation used throughout. \\

Let us start with the most simple curve in $\PP^{3}$, the line. Given two different quadrics in $\PPP$ that contain a fixed line, their intersection is the union of the line with a curve of degree $3$ and genus $0$, a twisted cubic. If we take two different cubic surfaces that contain this twisted cubic and intersect them, one obtains a residual curve of degree $6$ and genus $3$. We can continue iterating this process to produce an infinite family of curves and all these curves are linked to the line in the sense of the next definition.

\begin{defi}  Two curves   $C$ and $C^{\prime}$ in $\PP^{3}$ are \textit{directly geometrically linked } (or simply directly linked) by the complete intersection of two surfaces $X$ and $X^{\prime}$ if $C\cup C^{\prime}=X\cap X^{\prime}$. The curves $C, C^{\prime}$ are \textit{linked} if there exists a  finite number of curves $C_{1}, \ldots ,C_{m}$ in $\PP^{3}$ such that $C_{i}$ is directly linked to $C_{i+1}$ for all $i$, with $C=C_{1}$ and $C^{\prime}=C_{m}$. 
\end{defi}

The previous definition induces an equivalence relation called \textit{linkage equivalence}. The linkage classes can be identified via the following $S$-module of finite length:

\begin{defi}
Let $C$ be a curve in $ \PP^{3}$. The \textit{Rao module} of $C$ is defined by:
$$M(C):=\bigoplus_{n\in{\mathds{Z}}}H^{1}(\PP^{3},\mathscr{I}_{C}(n)).$$
\end{defi}

 We are interested in studying families of curves linked to a fixed family $\mathscr{C}$. For this reason, we set the notation to refer to these types of families.

 \begin{defi}  Given a family of curves with constant cohomology $\cc$ in a Hilbert scheme of curves in $\PP^{3}$ we denote by $\mathscr{L}_{s}\cc$ the family of all curves linked to elements of $\cc$ by the complete intersection of two surfaces of degree $s$, we say that the family $\mathscr{L}_{s}\cc$ is a \textit{linked family}. In many cases, we iterate this construction and write $\mathscr{L}_{s}^{n+1}\cc$ instead of $\mathscr{L}_{s}(\mathscr{L}_{s-1}^{n}\cc)$. 
\end{defi}

\begin{rmk}
In terms of \cite{mdp2}*{VII}, $\cc$ is an element of $H_{\gamma ,M}(Y)$ for some closed scheme $Y$ and $\mathscr{L}\cc$ is an element of $H_{\gamma^{\prime},M^{\prime}}(Y)$ such that the schemes of flags are isomorphic, $\mathcal{D}_{\gamma ,M,s,s}\cong \mathcal{D}_{\gamma^{\prime},M^{\prime},s,s}$.
\end{rmk}

Observe that, if a curve $C$ in the family $\cc$ has degree $d$ and genus $g$, then the curves $C^{\prime}$ in the family $\mathscr{L}_{s}\cc$ linked to $C$ have degree $d^{\prime}=s^{2}-d$ and genus $g^{\prime}=\frac{(d^{\prime}-d)(2s-4)}{2}+g$ \cite{mdp2}*{III,Prop 1.2}. Furthermore, we have that $H^{1}(\PP^{3},\mathscr{I}_{C^{\prime}}(n))$ is isomorphic to the dual of $H^{1}(\PP^{3},\mathscr{I}_{C}(2s-4-n))$ for each $n$. In particular, this implies that $M(C^{\prime})=0$ if $M(C)=0$.\\

Returning to the curves linked to the line, by the previous observation, inductively we have that for every $r$, we obtain a curve of degree $d_{r}=\frac{r(r+1)}{2}$ and genus $g_{r}=\frac{r(r+1)(2r-5)}{6}+1$. We are interested in studying the Hilbert scheme of curves with each of these invariants. Thus, for any positive integer $r$, we set the numbers:
\begin{align*}
 d_{r}=\frac{r(r+1)}{2} \quad \text{  and  } \quad g_{r}=\frac{r(r+1)(2r-5)}{6}+1. 
 \end{align*} 
Let us consider the Hilbert scheme $Hilb_{p_{r}(t),3}$ of one dimensional closed subschemes of $\PPP$ with Hilbert polynomial $p_{r}(t)=d_{r}t+(1-g_{r})$. The set of locally Cohen-Macaulay curves of degree $d_{r}$ and arithmetic genus $g_{r}$ is denoted by $\mathscr{H}_{r}^{lcm}\subseteq Hilb_{p_{r}(t),3} $. We refer to the curves in $\mathscr{H}_{r}^{lcm}$  as \textit{triangular curves}.

 Triangular curves satisfy: 
\begin{enumerate}
\item Their degrees are triangular numbers,
\item We have that $\mathscr{L}_{r+1} \mathscr{H}_{r}^{lcm} \subseteq \hh^{lcm}_{r+1}$ for all $r$.
\end{enumerate}  

\begin{defi}
Let $\mathscr{H}_{r}$ be the closure of triangular curves $\mathscr{H}_{r}^{lcm}$ in $Hilb_{p_{r}(t),3}$.
\end{defi} 

The first two cases are very well known:
\begin{enumerate}
\item The Hilbert scheme $\hh_{1}$ is the Grassmannian of lines in $\PP^{3}$, it is smooth and irreducible of dimension $4$.
\item The scheme $\hh_{2}$ is the component of the Hilbert scheme of curves of degree $3$ and genus $0$ whose generic element is a twisted cubic. This scheme is smooth and irreducible of dimension $12$ (cf. \cite{dc}, \cite{ct} and \cite{ps}).
\end{enumerate}

Our purpose is to study the cases for $r\geq 3$, in particular, we focus on a special component in each of these spaces. To describe this component let's consider the family in $\hh_{r}$ that parametrizes curves $C$ with minimal free resolution: 
\begin{equation} \label{resacm}
\xymatrix{ 0\ar[r]&\mathcal{O}_{\PP^{3}}(-(r+1))^{r}\ar[r]^{M_{r}}& \mathcal{O}_{\PP^{3}}(-r)^{r+1}\ar[r] &  \mathscr{I}_{C} \ar[r] &0 }\end{equation} 
where $\mathscr{I}_{C}$ is the ideal sheaf of $C$. It follows from this resolution that the degree of an element in this family is $d_{r}$ and the genus is $g_{r}$. 

\begin{defi}
We denote by $\cc_{r}$ the family of curves in $\hh_{r}$ with minimal free resolution as  (\ref{resacm}). 
\end{defi} 
For curves $C\in{\cc_{r}}$, the Rao module $M(C)$ is trivial, i.e., $M(C)\equiv 0$, which means that the curves in $\cc_{r}$ are ACM curves. We also know that these curves are smooth points in $\hh_{r}$ by  \cite{ell}*{Thm 2}. We claim that these families form the unique component of ACM curves in $\hh_{r}$ for each $r>0$. Before seeing that, we need a technical lemma:

\begin{lem} \label{lemidealACM}
    Let $C\in{\hh_{r}}$ an ACM curve, then:
    \begin{enumerate}
        \item We have that: $h^{0}(\PPP,\ii_{C}(r))=r+1,$
        \item the minimal degree of a surface that contains $C$ is $r$, that means:
        $$s_{C}=min\{n|h^{0}(\PPP,\ii_{C}(n))\not= 0\}=r$$
    \end{enumerate}
\end{lem}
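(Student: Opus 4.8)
The plan is to pin down the $h$-vector of $C$ exactly, using the fact that $g_{r}$ is the \emph{minimal} genus attainable by an ACM curve of degree $d_{r}$; this rigidifies the $h$-vector, and then (1) and (2) fall out of a Hilbert-function computation.

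\textbf{Step 1 (Artinian reduction).} Let $I_{C}\subseteq S$ be the saturated homogeneous ideal of $C$ and $R:=S/I_{C}$. Since $C$ is ACM, $R$ is Cohen--Macaulay of Krull dimension $2$; as $k$ is infinite we may choose general linear forms $\ell_{1},\ell_{2}\in S_{1}$ forming a regular sequence on $R$ and completing to a coordinate system of $S$, so that $A:=R/(\ell_{1},\ell_{2})$ is an Artinian graded quotient of a polynomial ring in two variables. Its Hilbert function $(h_{i})_{i\ge 0}$ — the $h$-vector of $C$ — satisfies $h_{0}=1$ and $0\le h_{i}\le i+1$ for $i\ge 1$, and the Hilbert series of $R$ equals $(\sum_{i}h_{i}t^{i})/(1-t)^{2}$; hence $\dim_{k}R_{n}=\sum_{i=0}^{n}(n+1-i)h_{i}$ for all $n\ge 0$. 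Because $C$ is ACM, $H^{1}(\PPP,\ii_{C}(n))=0$ for every $n$, so $\dim_{k}R_{n}=h^{0}(\mathcal{O}_{C}(n))$, which for $n\gg 0$ equals $\chi(\mathcal{O}_{C}(n))=d_{r}n+1-g_{r}$. Comparing the two expressions for $n\gg 0$ gives the numerical constraints $\sum_{i}h_{i}=d_{r}$ and $\sum_{i\ge 1}(i-1)h_{i}=g_{r}$.

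\textbf{Step 2 (the crux: the $h$-vector is $(1,2,\dots,r)$).} Since $\sum_{i\ge 1}h_{i}=d_{r}-1$ is fixed, minimising $\sum_{i\ge 1}(i-1)h_{i}$ over admissible $h$-vectors is equivalent to minimising $\sum_{i\ge 1}ih_{i}$ under the ceilings $h_{i}\le i+1$ — a greedy-packing problem whose minimum is achieved by loading as much mass as possible onto the smallest indices. As $\sum_{i=1}^{r-1}(i+1)=d_{r}-1$ \emph{exactly}, the unique minimiser is $h_{i}=i+1$ for $1\le i\le r-1$ and $h_{i}=0$ for $i\ge r$ (any other admissible choice moves mass to an index $\ge r$ and strictly increases $\sum_{i}ih_{i}$). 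A short computation — or, equivalently, the remark that the curves in $\cc_{r}$, whose resolution~(\ref{resacm}) forces exactly the $h$-vector $(1,2,\dots,r)$, already have degree $d_{r}$ and genus $g_{r}$ — shows that this minimal value of $\sum_{i\ge 1}(i-1)h_{i}$ equals $g_{r}$. Therefore $C$ attains the minimum, and its $h$-vector is forced: $h_{i}=i+1$ for $0\le i\le r-1$ and $h_{i}=0$ otherwise.

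\textbf{Step 3 (conclusion).} Substituting this $h$-vector into $\dim_{k}R_{n}=\sum_{i=0}^{n}(n+1-i)h_{i}$ and using the elementary identity $\binom{n+3}{3}=\sum_{i=0}^{n}(n+1-i)(i+1)$, one finds $\dim_{k}R_{n}=\binom{n+3}{3}=\dim_{k}S_{n}$ for all $n\le r-1$, while $\dim_{k}R_{r}=\binom{r+3}{3}-(r+1)$. Since $h^{0}(\PPP,\ii_{C}(n))=\dim_{k}S_{n}-\dim_{k}R_{n}$, this yields $h^{0}(\PPP,\ii_{C}(n))=0$ for all $n<r$ and $h^{0}(\PPP,\ii_{C}(r))=r+1$, which is precisely (1) and, together, (2): $s_{C}=r$.

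The main obstacle is Step 2: showing that $g_{r}$ is exactly the minimal genus of an ACM curve of degree $d_{r}$, which is what forces the $h$-vector to be $(1,2,\dots,r)$. Everything else is routine bookkeeping with Hilbert functions.
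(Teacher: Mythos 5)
Your proof is correct, but it takes a genuinely different route from the paper's. The paper's argument is a short Riemann--Roch computation: since $C$ is ACM (so $h^{1}(\PPP,\ii_{C}(n))=0$ for all $n$) and, as the paper asserts, ``not special'' in the relevant degrees, one gets $h^{0}(\PPP,\ii_{C}(n))=\binom{n+3}{3}-(1-g_{r}+nd_{r})$ for $n=r$ and $n=r-1$, which evaluates to $r+1$ and $0$ respectively, giving both claims at once. You instead pass to the Artinian reduction, translate degree and genus into $\sum_{i}h_{i}=d_{r}$ and $\sum_{i\ge 1}(i-1)h_{i}=g_{r}$, and force the $h$-vector to be $(1,2,\dots,r)$ by the uniqueness of the greedy minimiser under the ceiling $h_{i}\le i+1$ (your exchange argument for uniqueness is sound, and the identity $\sum_{i=1}^{r-1}(i^{2}-1)=g_{r}$ checks out); the Hilbert-function bookkeeping then yields $h^{0}(\PPP,\ii_{C}(n))=0$ for $n<r$ and $h^{0}(\PPP,\ii_{C}(r))=r+1$. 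Two points of comparison: (i) your route is more self-contained, since it does not rest on the assertion that an ACM curve with these invariants is non-special in degrees $r-1$ and $r$ (i.e.\ $h^{1}(C,\mathcal{O}_{C}(r-1))=h^{1}(C,\mathcal{O}_{C}(r))=0$), which is precisely what the paper's phrase ``it is not special'' glosses over and which your computation recovers a posteriori once the $h$-vector is pinned down; (ii) your Step 2 essentially re-proves the uniqueness of the $h$-vector of ACM curves in $\hh_{r}$, a fact the paper establishes separately in Theorem \ref{acm} by combining Migliore's characterisation of admissible $h$-vectors with this very lemma, so your argument subsumes part of that theorem at the cost of redoing combinatorics the paper outsources. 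The price you pay is length; what you buy is rigour at the one step the paper leaves implicit, plus complete knowledge of the Hilbert function of $C$ in all degrees.
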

\begin{proof}
    Since $C$ is an ACM curve, it is not special, therefore:
    \begin{align*}
        h^{0}(\PPP,\ii_{C}(r))&=h^{0}(\PPP,\mathcal{O}_{\PPP}(r))-h^{0}(C,\mathcal{O}_{C}(r))\\
        &=\binom{r+3}{3}-(1-g_{r}+rd_{r})=r+1
    \end{align*}
    That proves the first point. On the other hand,
    \begin{align*}
        h^{0}(\PPP,\ii_{C}(r-1))&=h^{0}(\PPP,\mathcal{O}_{\PPP}(r-1))-h^{0}(C,\mathcal{O}_{C}(r-1))\\
        &=\binom{r+2}{3}-(1-g_{r}+(r-1)d_{r})=0
    \end{align*}
    thus we have that $r-1<s_{C}\leq r$.\\
\end{proof}

\begin{thm} \label{acm}
The family $\overline{\cc_{r}}$ is the only irreducible component in $\hh_{r}$ of ACM curves.
\end{thm}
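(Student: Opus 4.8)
The plan is to show two things: (a) every ACM curve $C \in \hh_r$ actually lies in $\cc_r$, i.e.\ has a minimal free resolution of the form (\ref{resacm}); and (b) the family $\cc_r$ is irreducible of the expected dimension, so that its closure $\overline{\cc_r}$ is a genuine irreducible component (using that its points are smooth in $\hh_r$ by \cite{ell}*{Thm 2}). Combining (a) and (b) then forces $\overline{\cc_r}$ to be the unique irreducible component of $\hh_r$ whose general member is ACM, since any component containing an ACM curve would, by (a), contain a curve of $\cc_r$, which is a smooth point and hence lies on only one component.

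\textbf{Step 1: Pin down the resolution.} Let $C \in \hh_r$ be ACM. By Lemma~\ref{lemidealACM}, $s_C = r$ and $h^0(\PPP, \ii_C(r)) = r+1$, so the saturated ideal $I_C \subseteq S$ has no generators in degree $< r$ and exactly $r+1$ linearly independent generators in degree $r$. Since $C$ is ACM of codimension two, the Auslander--Buchsbaum formula gives that $S/I_C$ has a length-two minimal free resolution, and by the Hilbert--Burch theorem it has the shape
\[
0 \longrightarrow \bigoplus_{j=1}^{r} S(-b_j) \xrightarrow{\ \varphi\ } \bigoplus_{i=1}^{r+1} S(-a_i) \longrightarrow I_C \longrightarrow 0,
\]
where $I_C$ is generated by the maximal minors of $\varphi$. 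We have just seen $a_i = r$ for all $i$. The degrees $b_j$ are then determined by matching Hilbert polynomials (equivalently, by the fact that each maximal minor of $\varphi$, obtained by deleting one row, has degree $d_r$, and every entry of $\varphi$ has positive degree so that the resolution is minimal): a numerical computation with the Hilbert polynomial $p_r(t)$ forces $b_j = r+1$ for all $j$, which is exactly (\ref{resacm}). Thus $C \in \cc_r$. The one subtlety to check here is that no $b_j$ can exceed $r+1$: this follows because $\sum b_j - \sum a_i$ is fixed by the genus/degree data, and each $b_j > a_i$ by minimality, leaving $b_j = r+1$ as the only possibility; I would spell this arithmetic out carefully as it is the crux of Step 1.

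\textbf{Step 2: Irreducibility and component-hood of $\cc_r$.} The curves in $\cc_r$ are precisely the degeneracy loci of $r \times (r+1)$ matrices of linear forms $M_r$ over $S(-1)$ (entries in $H^0(\PPP,\mathcal{O}(1))$, a $4$-dimensional space), subject to the open condition that the minors define a curve (codimension two, no embedded points). The space of such matrices is an affine space, irreducible; the locus where the maximal minors cut out a curve of the right dimension is open in it, hence irreducible; and the assignment matrix $\mapsto$ curve realizes $\cc_r$ as the image of this irreducible variety under a morphism to $\hh_r$ (Hilbert--Burch gives the matrix back up to the action of $\mathrm{GL}_r \times \mathrm{GL}_{r+1}$, so the fibers are irreducible too). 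Therefore $\cc_r$ is irreducible, and so is its closure $\overline{\cc_r}$.

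\textbf{Step 3: Conclude.} Since every point of $\cc_r$ is a smooth point of $\hh_r$ by \cite{ell}*{Thm 2}, it lies on a unique irreducible component of $\hh_r$; by irreducibility of $\cc_r$ this common component is $\overline{\cc_r}$. Now if $Z \subseteq \hh_r$ is any irreducible component whose general member is ACM, pick such a general ACM curve $C \in Z$; by Step 1, $C \in \cc_r \subseteq \overline{\cc_r}$, and since $C$ is a smooth point of $\hh_r$ it lies on only one component, so $Z = \overline{\cc_r}$. Hence $\overline{\cc_r}$ is the only component of ACM curves. The main obstacle is really Step 1 --- the passage from the two cohomological facts in Lemma~\ref{lemidealACM} to the \emph{full} Betti table via Hilbert--Burch, making sure the numerics rule out all other possible degree patterns for $\varphi$; once the resolution is fixed, Steps 2 and 3 are formal.
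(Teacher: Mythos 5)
Your strategy (recover the full minimal free resolution of an arbitrary ACM curve in $\hh_{r}$ via Hilbert--Burch, then use irreducibility of $\cc_{r}$ and smoothness from \cite{ell} to conclude) is genuinely different from the paper's, which never touches the Betti table: it uses the correspondence from \cite{mig2} between ACM components of $\hh_{r}$ and admissible $h$-vectors, and Lemma \ref{lemidealACM} only to force the $h$-vector to be $(1,2,\ldots,r)$. Your route has a real gap at its crux, Step 1. From Lemma \ref{lemidealACM} you assert ``$a_i=r$ for all $i$,'' but the lemma only says that $I_C$ has no elements in degree $r-1$ and an $(r+1)$-dimensional piece in degree $r$, i.e.\ exactly $r+1$ minimal generators \emph{in degree $r$}; it does not exclude further minimal generators in degrees $>r$. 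The Hilbert-polynomial bookkeeping you invoke cannot exclude them either: the Hilbert function only determines the differences $\#\{a_i=t\}-\#\{b_j=t\}$, so a ``ghost'' pair $S(-t)$ appearing among both generators and syzygies for some $t\geq r+1$ is numerically invisible. Such jumps really occur for codimension-two ACM schemes with fixed $h$-vector: four points in $\PP^{2}$ with $h$-vector $(1,2,1)$ have resolution $0\to S(-4)\to S(-2)^{2}\to I\to 0$ when they are a complete intersection of two conics, but need an extra cubic generator when three of them are collinear. Also, ``each $b_j>a_i$ by minimality'' is not correct as stated; minimality only forces the degree-zero entries of the matrix to vanish. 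So the subtlety is not ruling out $b_j>r+1$ once all $a_i=r$ (that arithmetic, $\#b=\#a-1$ and $\sum b_j=\sum a_i$ with $b_j\geq r+1$, is easy), but justifying that there are no generators above degree $r$ at all.

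The missing statement is in fact true for the $h$-vector $(1,2,\ldots,r)$, but it needs an argument of a different kind, for instance: the syzygy module of a codimension-two ACM ideal is free, so the degree-$(r+1)$ syzygies are spanned by basis elements of that degree, each supported (by minimality) only on the degree-$r$ generators; if there were more than $r$ of them, the resulting $(r+1)\times(r+1)$ matrix of linear forms annihilated by the row of generators would have zero determinant, producing a polynomial relation among basis elements of a free module, a contradiction; then the Hilbert-function identity gives $\#\{a_i=r+1\}=0$, and a ``no zero row'' argument in the top degree kills ghost pairs in degrees $\geq r+2$. Alternatively, you can bypass the Betti table entirely as the paper does: Lemma \ref{lemidealACM} pins down the $h$-vector of every ACM curve in $\hh_{r}$, and the classification in \cite{mig2} of ACM components by admissible $h$-vectors gives uniqueness of the component directly, with no need to control generator degrees above $r$. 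Your Steps 2 and 3 are fine once Step 1 is repaired.
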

\begin{proof}
By \cite{mig2} the existence of ACM components are in correspondence with the existence of $h$-vectors of curves $C$ that satisfies:
\begin{enumerate}
    \item $h(0)=1$,
\item There is an integer $s\geq 1$, equal to the least degree of a surface containing $C$ such that
$$\left\{
  \begin{array}{ll}
		 h(n)=n+1    & \mathrm{for} \; 0\leq n\leq s-1; \\
	    h(n)\geq h(n+1)  & \mathrm{for} \; n \geq s-1;\\ 
         h(n)=0    & \mathrm{for} \; n \gg 0. 
		 \end{array}
	     \right.$$
      \item $h^{0}(\PPP,\ii_{C}(s))=s+1-h(s)$.
\end{enumerate}

In the case of the curves in $\cc_{r}$, the $h$-vector is $ \{(1, 2, \ldots , r)\}$, which corresponds with an open set of an irreducible component of $\hh_{r}$. Suppose that there is  another $h$-vector $\{1,a_{1},\ldots ,a_{m}\}$, by \ref{lemidealACM} $s$ is equal to $r$ and $h(r)=0$, then by the condition 2, we have that $a_{i}=0$ for all $i\in{\{r,r+1,\ldots ,m\}}$ and therefore  $\{1,a_{1},\ldots ,a_{r-1}\}=\{(1, 2, \ldots , r)\}$ and this is the only possible $h$-vector in $\hh_{r}$ of ACM curves. Then $\cc_{r}$ is the only irreducible component in $\hh_{r}$ of ACM curves.\\
\end{proof}

\begin{rmk} \label{incidence}
For $r >0$, we can consider the incidence variety  $W_{r}$ defined by:

$$\ \xymatrix{&&&W_{r}:=\{ (C_{r},C_{r+1})\in{\mathscr{C}_{r}\times \mathscr{C}_{r+1}}|C_{r}\cup C_{r+1}=X\cap X^{\prime}\} \ar[ld]_{\pi_{0}}\ar[rd]^{\pi_{1}}&&&\\
&&\cc_{r} & &\cc_{r+1}&&}$$
If $C_{r}\in{\cc_{r}}$, then $C_{r+1}$ denotes a curve linked to $C_{r}$ by the complete intersection of two surfaces $X$ and $X^{\prime}$ of degree $r+1$; that means, $C_{r+1}\in{\mathscr{L}_{r+1}\{C_{r}\}}$. A consequence of Theorem \ref{acm} is that $\cc_{r+1}=\mathscr{L}_{r+1}\cc_{r}$ and therefore we compute the dimension and verify that it has the expected dimension.

\begin{lem}For every $r\geq 1$ we have that
$$dim\  \cc_{r}=4d_{r}=2r(r+1).$$
\end{lem}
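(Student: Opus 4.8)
The strategy is an induction on $r$, using the linkage correspondence $W_r$ from Remark \ref{incidence} together with the fact (a consequence of Theorem \ref{acm}) that $\mathscr{C}_{r+1} = \mathscr{L}_{r+1}\mathscr{C}_r$. The base case is $r=1$: here $\mathscr{C}_1$ is the Grassmannian of lines in $\PP^3$, which has dimension $4 = 4d_1$. For the inductive step, assume $\dim \mathscr{C}_r = 4d_r$; I want to compute $\dim \mathscr{C}_{r+1}$ by analyzing the two projections $\pi_0 : W_r \to \mathscr{C}_r$ and $\pi_1 : W_r \to \mathscr{C}_{r+1}$.

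First I would compute $\dim W_r$ via $\pi_0$. Given a fixed $C_r \in \mathscr{C}_r$, a point of $\pi_0^{-1}(C_r)$ is determined by choosing two surfaces $X, X'$ of degree $r+1$ containing $C_r$ (the linked curve $C_{r+1}$ is then determined as the residual), modulo the choices that give the same complete intersection. By Lemma \ref{lemidealACM} applied in degree $r+1$ — or directly from the resolution (\ref{resacm}) — one computes $h^0(\PPP, \ii_{C_r}(r+1))$; since $C_r$ is ACM with $h$-vector $(1,2,\dots,r)$, this is $\binom{r+4}{3} - (1 - g_r + (r+1)d_r)$, and the space of pencils of such surfaces (Grassmannian of $2$-dimensional subspaces of $H^0(\ii_{C_r}(r+1))$) has a well-defined dimension; one must then account for the fact that different pencils through $C_r$ can produce the same residual $C_{r+1}$, or equivalently work with the flag-scheme description from \cite{mdp2}*{VII} invoked in the earlier remark. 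Adding $\dim \mathscr{C}_r$ gives $\dim W_r$.

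Next I would compute the fibers of $\pi_1$. Since $\mathscr{C}_{r+1} = \mathscr{L}_{r+1}\mathscr{C}_r$, the map $\pi_1$ is surjective, and a fiber $\pi_1^{-1}(C_{r+1})$ parametrizes the curves $C_r$ linked to $C_{r+1}$ by a complete intersection of two degree-$(r+1)$ surfaces; this is governed by $h^0(\PPP, \ii_{C_{r+1}}(r+1))$, which again is forced by the numerical invariants $d_{r+1}, g_{r+1}$ and the fact that $C_{r+1}$ is ACM. Then $\dim \mathscr{C}_{r+1} = \dim W_r - \dim \pi_1^{-1}(C_{r+1})$, and it remains to check that the resulting arithmetic yields exactly $4d_{r+1} = 2(r+1)(r+2)$. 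Alternatively — and this is probably cleaner — I would bypass $W_r$ and argue directly that linkage by a fixed-degree complete intersection changes the dimension in a controlled way: the dimension of a linked family $\mathscr{L}_s\mathscr{C}$ differs from that of $\mathscr{C}$ by $2\big(h^0(\ii_{C'}(s)) - h^0(\ii_C(s))\big)$ (the change in the number of parameters for choosing the pencil), which here telescopes. I expect the main obstacle to be the bookkeeping of \emph{which} pencils through a given ACM curve are to be counted — that is, making precise the claim that the generic curve in $\mathscr{C}_r$ has the expected behavior under linkage and that no dimension is lost to the reducibility of the ambient Hilbert scheme — but since all curves involved are ACM and hence unobstructed smooth points of $\hh_r$ by \cite{ell}*{Thm 2}, the obstruction spaces vanish and the count goes through cleanly. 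A final sanity check: the formula $\dim \mathscr{C}_r = 4d_r$ is consistent with the known cases $r=1$ ($\dim = 4$) and $r=2$ ($\dim = 12$) recorded in the excerpt.
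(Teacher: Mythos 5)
Your plan is correct in substance, but it takes a genuinely different route from the paper: the paper's proof is a one-line citation of Ellingsrud \cite{ell}*{Thm 2}, which gives directly the dimension (and smoothness) of the Hilbert scheme of codimension-two ACM subschemes from the resolution (\ref{resacm}), whereas you run an induction through the linkage correspondence $W_r$. Your induction does close: by Lemma \ref{lemidealACM} (in degree $r+1$) or from (\ref{resacm}) one gets $h^{0}(\PPP,\ii_{C_r}(r+1))=3r+4$ for $C_r\in\cc_r$ and $h^{0}(\PPP,\ii_{C_{r+1}}(r+1))=r+2$ for the ACM residual, so the generic fibers of $\pi_0$ and $\pi_1$ are open subsets of Grassmannians of pencils of dimensions $2(3r+4)-4=6r+4$ and $2(r+2)-4=2r$, giving $\dim\cc_{r+1}=\dim\cc_r+(6r+4)-2r=2(r+1)(r+2)$, as required; the inputs you need ($\cc_{r+1}=\mathscr{L}_{r+1}\cc_r$, hence dominance of $\pi_1$, and the identification of fibers with pencils, which is injective because the complete intersection is recovered from the pair as $\ii_{C_r}\cap\ii_{C_{r+1}}$ and its degree-$(r+1)$ part is exactly the pencil) are available from Theorem \ref{acm} and Remark \ref{incidence}, and your worry about the reducibility of $\hh_r$ is harmless here since everything stays inside the ACM family. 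Two small points: the arithmetic is left unfinished in your write-up (it should be carried out as above), and in your ``cleaner'' aside the sign is reversed --- with $C\in\cc$ and $C'$ the linked curve the correct statement is $\dim\mathscr{L}_s\cc-\dim\cc=2\bigl(h^{0}(\ii_{C}(s))-h^{0}(\ii_{C'}(s))\bigr)$. As for what each approach buys: the citation of \cite{ell} is immediate and simultaneously yields that the points of $\cc_r$ are smooth points of $\hh_r$, while your linkage count is self-contained modulo the section computations and is exactly the mechanism the paper deploys later (Proposition \ref{dimch}) to compute the dimensions of the linked families $\mathscr{L}^{r-3}\ccc$ and $\mathscr{L}^{r-3}\aaa$, so it has the merit of previewing and unifying that argument.
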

\begin{proof}
    This can be computed directly from \cite{ell}*{Thm 2}.
\end{proof}
\end{rmk}

If $r\geq 3$ the Hilbert scheme $\hh_{r}$ is reducible; furthermore, it has a generically nonreduced component of dimension $\frac{3}{2}d_{r}(d_{r}-3)+9-2g_{r}$, \cite{mdp1}*{Thm.4.3}. The existence of more than one component complicates verifying that  $\mathscr{L}\mathscr{B}\subseteq \overline{\cc_{r}}$ for a family $ \mathscr{B}$ in the component $\overline{\cc_{r-1}}$. An important part of this work is to verify these contentions for some special families.\\

Now we investigate the relation between the ideal sheaves of linked curves in order to compute the dimension of a linked family $\mathscr{L}\cc$ in terms of the dimension of the family $\cc$. For this, we start with a curve $C_{r}$ in $\mathscr{H}_{r}$, and suppose that there exists a curve $D$ in $\PPP$ and surfaces $X$ and $X^{\prime}$ of degrees $t$ and $s$ respectively such that $C_{r}$ and $D$ are linked by the complete intersection of $X$ and $X^{\prime}$. Assume that $X$ is smooth. Then, we can consider $C_{r}$ as a divisor in $X$ with class $X\cap X^{\prime}-D=sL-D$, where $L$ is a hyperplane section of $X$. 

\begin{lem} Under the above hypotheses, the following equality is satisfied:
\begin{equation} 	  \label{eqa}
\begin{split}
    h^{0}(X,\mathcal{O}_{X}(-C_{r})(m))&-h^{1}(X,\mathcal{O}_{X}(-C_{r})(m))\\
      &=- h^{0}(X,\mathcal{O}_{X}(-D)(s+t-m-4))+\frac{r(r-m-2)(r-m-1)}{2}.
 \end{split}
\end{equation}
\end{lem}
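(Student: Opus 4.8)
The plan is to compute both sides of \eqref{eqa} via Riemann--Roch on the smooth surface $X$ together with Serre duality, and then to use the liaison relation $C_r \sim sL - D$ (linear equivalence of divisors on $X$, where $L$ is a hyperplane section) to convert statements about $\mathcal{O}_X(-C_r)$ into statements about $\mathcal{O}_X(-D)$. Concretely, write $\mathcal{O}_X(-C_r)(m) = \mathcal{O}_X(mL - C_r)$. Since $X$ is a smooth surface of degree $t$ in $\PP^3$, its canonical class is $K_X = (t-4)L$ by adjunction, so Serre duality gives
\[
h^2(X, \mathcal{O}_X(mL - C_r)) = h^0(X, \mathcal{O}_X(K_X - mL + C_r)) = h^0(X, \mathcal{O}_X((t-4-m)L + C_r)).
\]
Using $C_r \sim sL - D$, this last group is $h^0(X, \mathcal{O}_X((s+t-4-m)L - D)) = h^0(X, \mathcal{O}_X(-D)(s+t-m-4))$, which is exactly the first term on the right-hand side of \eqref{eqa}. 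Therefore the left-hand side of \eqref{eqa}, namely $h^0 - h^1$ of $\mathcal{O}_X(-C_r)(m)$, equals $\chi(X, \mathcal{O}_X(-C_r)(m)) + h^2(X, \mathcal{O}_X(-C_r)(m)) - 2h^1$... wait — more cleanly: $h^0 - h^1 = \chi - h^2 + 2(h^0 - \chi + h^2)$ is circular, so instead I would simply note $h^0 - h^1 = \chi(\mathcal{O}_X(-C_r)(m)) + h^2(\mathcal{O}_X(-C_r)(m)) - 2h^1$ is wrong; the correct identity is $h^0 - h^1 + h^2 = \chi$, hence $h^0 - h^1 = \chi - h^2$. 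So it suffices to prove
\[
\chi(X, \mathcal{O}_X(-C_r)(m)) = \frac{r(r-m-2)(r-m-1)}{2},
\]
after substituting the Serre duality computation of $h^2$ above.

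To establish this $\chi$ formula I would apply Riemann--Roch on the surface $X$:
\[
\chi(X, \mathcal{O}_X(D')) = \chi(\mathcal{O}_X) + \tfrac{1}{2}D'\cdot(D' - K_X),
\]
with $D' = mL - C_r$. All intersection numbers here are computed on $X$, or pulled back from $\PP^3$: $L^2 = t$ (the degree of $X$), $L \cdot C_r = d_r = \tfrac{r(r+1)}{2}$ (the degree of the curve $C_r$), and $C_r^2$ can be extracted from the adjunction formula $2p_a(C_r) - 2 = C_r\cdot(C_r + K_X) = C_r^2 + (t-4)(L\cdot C_r)$, giving $C_r^2 = 2g_r - 2 - (t-4)d_r$. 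Also $\chi(\mathcal{O}_X) = 1 + \binom{t-1}{3}$ for a smooth surface of degree $t$. Plugging all of this in, expanding, and using the explicit polynomial expressions $d_r = \tfrac{r(r+1)}{2}$, $g_r = \tfrac{r(r+1)(2r-5)}{6}+1$, the degree-$t$ dependence should cancel entirely (as it must, since the right-hand side of \eqref{eqa} only involves $r$ and $m$), leaving the cubic $\tfrac{r(r-m-2)(r-m-1)}{2}$ in $r$ and $m$. This is the one genuinely computational step, but it is a finite polynomial identity and can be checked directly.

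The main obstacle is really a bookkeeping and consistency issue rather than a conceptual one: I must be careful that $C_r$ is a \emph{Cartier} divisor on $X$ (which holds because $X$ is smooth), that the identification $C_r \sim sL - D$ is as divisor classes on $X$ and not merely as schemes — this is the standard liaison fact that $\mathcal{O}_X(-C_r) \cong \mathcal{I}_{D/X}(-s)$ twisted appropriately, coming from the exact sequence $0 \to \mathcal{O}_X(-C_r - D) \to \mathcal{O}_X(-C_r) \oplus \mathcal{O}_X(-D) \to \mathcal{O}_X(-s) \to 0$ attached to $C_r \cup D = X \cap X'$ — and that the twist conventions ($\mathcal{O}_X(-C_r)(m)$ versus $\mathcal{O}_X(mL - C_r)$) are applied uniformly. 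Once the translation to intersection theory on $X$ is set up correctly, the identity \eqref{eqa} follows from Riemann--Roch, adjunction, Serre duality, and the explicit formulas for $d_r, g_r$, with the surface degrees $s$ and $t$ dropping out of the Euler characteristic term and surviving only inside the single $h^0(X, \mathcal{O}_X(-D)(s+t-m-4))$ term as required.
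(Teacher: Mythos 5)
Your overall skeleton is the same as the paper's: you write $h^0-h^1=\chi-h^2$ for $\mathcal{O}_X(mL-C_r)=\mathscr{I}_{C_r/X}(m)$, use Serre duality with $K_X=(t-4)L$ together with the divisor-class identity $C_r\sim sL-D$ on $X$ to identify $h^2$ with $h^0(X,\mathcal{O}_X(-D)(s+t-m-4))$, and reduce the lemma to an Euler-characteristic computation. That part is exactly the paper's argument (the paper computes $\chi$ from the restriction sequence $\chi(\mathscr{I}_{C_r/X}(m))=\chi(\mathcal{O}_X(m))-\chi(\mathcal{O}_{C_r}(m))$ and the Hilbert polynomials, whereas you propose Riemann--Roch on $X$ with $L^2=t$, $L\cdot C_r=d_r$, $C_r^2$ from adjunction; these are equivalent routes). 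Your parenthetical ``standard liaison'' sequence and the claim $\mathcal{O}_X(-C_r)\cong\mathcal{I}_{D/X}(-s)$ are garbled, but this is harmless since the class identity $C_r\sim sL-D$ is already part of the hypotheses.

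The genuine gap is in the one step you did not carry out. You assert that ``the degree-$t$ dependence should cancel entirely (as it must, since the right-hand side of \eqref{eqa} only involves $r$ and $m$)''; this is circular (it assumes the lemma) and, more importantly, it is false. Either route gives
\begin{equation*}
\chi\bigl(X,\mathcal{O}_X(mL-C_r)\bigr)=\binom{m+3}{3}-\binom{m-t+3}{3}-\bigl(md_r+1-g_r\bigr)
=\binom{t-1}{3}+1+\tfrac{1}{2}mt(m-t+4)-md_r+g_r-1,
\end{equation*}
which genuinely depends on $t=\deg X$ (the terms $\binom{t-1}{3}$ and $\tfrac12 mt(m-t+4)$ have nothing to cancel against), and it equals $\tfrac{r(r-m-1)(r-m-2)}{2}$ exactly when $t=r$. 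So if you performed the ``finite polynomial identity'' check you defer, it would not close; instead it would reveal the missing hypothesis $\deg X=r$. This is in fact what the paper silently assumes: its proof writes $\chi(\mathcal{O}_X(m))=\binom{m+3}{3}-\binom{m-r+3}{3}$, i.e.\ it takes the smooth surface $X$ to have degree $r$, and in the only application (Proposition \ref{star}) both linking surfaces have degree $r$. To repair your proof, either add the hypothesis $t=r$ explicitly (and then your Riemann--Roch computation does produce the stated cubic), or keep $t$ general and accept that the Euler-characteristic term must be $\binom{m+3}{3}-\binom{m-t+3}{3}-md_r+g_r-1$ rather than the cubic in $r,m$ appearing in \eqref{eqa}.
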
  
\begin{proof}
    From de exact sequence of the ideal of $C$ in $X$ and Riemann Roch we obtain the following equation
    \begin{equation} \label{eqRR}
        \chi \ii_{C/X}(m)=\chi \mathcal{O}_{X}(m) -\chi \mathcal{O}_{C}(m)=\binom{m+3}{3}-\binom{m-r+3}{3}-(1-g_{r}+nd_{r})
    \end{equation}
    by Serre duality, we know that $H^{2}(X,\ii_{C/X}(m))\cong H^{0}(X,\mathcal{O}_{X}((s+t-4-m)L-D)=H^{0}(X,\ii_{D/X}(s+t-4-m))$, then, substituting in the equation (\ref{eqRR}) and developing the right side we have the result.\\
\end{proof}

We want to study the cohomology of the ideal sheaf of a curve in $\PPP$, thus let us write the cohomology of the sheaf $ \mathcal{O}_{X}(-C_{r})(m)=\mathcal{O}_{X}(mL- C_{r}))$, over the surface $X$, in terms of the cohomology of the sheaf $\mathscr{I}_{C_{r}}(m)$ over $\PP^{3}$.

From the cohomology of the sequence of the surface $X$ of degree $r$ in $\mathds{P}^{3}$
 \[0\to \mathcal{O}_{\PP^{3}}(m-r) \to \mathcal{O}_{\PP^{3}}(m) \to \mathcal{O}_{X}(m) \to 0,\]
  we have that
 \begin{equation}\label{eq1}
H^{1}(X,\mathcal{O}_{X}(m))\cong H^{1}(\mathds{P}^{3},\mathcal{O}_{\mathds{P}^{3}}(m))=0 \quad \text{for all }m,
 \end{equation}
 and
 
  \begin{align}  
h^{0}(X,\mathcal{O}_{X}(m))&= h^{0}(\mathds{P}^{3},\mathcal{O}_{\mathds{P}^{3}}(m))- h^{0}(\mathds{P}^{3},\mathcal{O}_{\mathds{P}^{3}}(m-r))  &\quad  \quad \text{for all } m\geqslant r-3 \\
&=\binom{m+3}{3}-\binom{m-r+3}{3}& \quad  \quad \text{for all } m\geqslant r-3.  \label{eq2}
 \end{align}

From the exact sequence of a curve $C\subseteq \mathds{P}^{3}$, 
\[0\to \mathscr{I}_{C}(m) \to \mathcal{O}_{\PP^{3}}(m) \to \mathcal{O}_{C}(m) \to 0\]
gives us the next equality:
\begin{equation} \label{eq3}
h^{0}(C,\mathcal{O}_{C}(m))=-h^{0}(\mathds{P}^{3},\mathscr{I}_{C}(m))+h^{0}(\mathds{P}^{3},\mathcal{O}_{\mathds{P}^{3}}(m))+h^{1}(\mathds{P}^{3},\mathscr{I}_{C}(m)) \quad \text{for all } m>0.
\end{equation}
For any curve $C\subseteq X$, we can consider the structure exact sequence of $C$ in $X$:
\[0\to \mathcal{O}_{X}(-C) \to \mathcal{O}_{X} \to \mathcal{O}_{C} \to 0\]
That implies the next equality:
\begin{equation} \label{eq4}
h^{0}(C,\mathcal{O}_{C}(m))=-h^{0}(X,\mathcal{O}_{X}(-C)(m))+h^{0}(X,\mathcal{O}_{X}(m))+h^{1}(X,\mathcal{O}_{X}(-C)(m)) \text{ for all } m>0.
\end{equation}
Thus, we can use  equations (\ref{eq3}) and (\ref{eq4}) and combine them with equation (\ref{eq2}) to obtain:
\begin{equation}  \label{eqb}
\begin{split}
 h^{0}(X,\mathcal{O}_{X}&(-C)(m))-h^{1}(X,\mathcal{O}_{X}(-C)(m))\\
&=-\binom{m-r+3}{3}+h^{0}(\mathds{P}^{3},\mathscr{I}_{C}(m))-h^{1}(\mathds{P}^{3},\mathscr{I}_{C}(m)) \quad \text{for all }  m\geqslant r-3.
 \end{split}
\end{equation}

\begin{lem} \label{lem:eq}
Let $C$ be a curve contained in a surface $X$ of degree $r$ in $\mathds{P}^{3}$, then:
\[ h^{0}(X,\mathcal{O}_{X}(mL-C))=h^{0}(\mathds{P}^{3},\mathscr{I}_{C}(m))-h^{0}(\mathds{P}^{3},\mathcal{O}_{\mathds{P}^{3}}(m-r)). \]
\end{lem}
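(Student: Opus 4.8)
The plan is to use the identity (\ref{eqb}) already established, together with the vanishing of the relevant $H^1$ terms in the range $m \geq r-3$, and then to argue separately for small $m$ where that range does not obviously apply. Concretely, (\ref{eqb}) gives
\[
h^{0}(X,\mathcal{O}_{X}(-C)(m))-h^{1}(X,\mathcal{O}_{X}(-C)(m)) = h^{0}(\PP^{3},\ii_{C}(m)) - h^{1}(\PP^{3},\ii_{C}(m)) - \binom{m-r+3}{3}
\]
for all $m \geq r-3$, so the claimed equality is equivalent to showing that the two ``error'' terms $h^{1}(X,\mathcal{O}_{X}(-C)(m))$ and $h^{1}(\PP^{3},\ii_{C}(m))$ cancel, i.e. that they are equal. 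First I would exhibit this equality directly: the exact sequence $0 \to \mathcal{O}_{\PP^{3}}(m-r) \to \mathcal{O}_{\PP^{3}}(m) \to \mathcal{O}_{X}(m) \to 0$ twisted and compared with the structure sequence $0 \to \mathcal{O}_{X}(-C) \to \mathcal{O}_{X} \to \mathcal{O}_{C} \to 0$, combined with $0 \to \ii_{C}(m) \to \mathcal{O}_{\PP^{3}}(m) \to \mathcal{O}_{C}(m) \to 0$, yields a morphism of short exact sequences relating $\ii_{C}(m)$ on $\PP^{3}$ to $\mathcal{O}_{X}(-C)(m)$ on $X$. Since $C \subseteq X$, the sheaf $\ii_{C}(m)$ sits in an extension of $\mathcal{O}_{X}(-C)(m)$ by $\mathcal{O}_{\PP^{3}}(m-r)$, namely $0 \to \mathcal{O}_{\PP^{3}}(m-r) \to \ii_{C}(m) \to \mathcal{O}_{X}(-C)(m) \to 0$; taking cohomology and using $H^{1}(\PP^{3},\mathcal{O}_{\PP^{3}}(m-r))=0$ and $H^{2}(\PP^{3},\mathcal{O}_{\PP^{3}}(m-r))=0$ for $m \geq r-3$ gives $H^{1}(\PP^{3},\ii_{C}(m)) \cong H^{1}(X,\mathcal{O}_{X}(-C)(m))$.

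Granting that isomorphism, the numerical identity (\ref{eqb}) collapses to exactly
\[
h^{0}(X,\mathcal{O}_{X}(-C)(m)) = h^{0}(\PP^{3},\ii_{C}(m)) - \binom{m-r+3}{3} = h^{0}(\PP^{3},\ii_{C}(m)) - h^{0}(\PP^{3},\mathcal{O}_{\PP^{3}}(m-r)),
\]
where the last step uses (\ref{eq2}) (valid for $m \geq r-3$; for $m < r$ the binomial $\binom{m-r+3}{3}$ is either zero or, with the standard convention for negative top argument, equals $-h^0(\PP^3,\mathcal{O}(m-r))$ anyway, so one should state the convention once). Alternatively, and more cleanly, the short exact sequence $0 \to \mathcal{O}_{\PP^{3}}(m-r) \to \ii_{C}(m) \to \mathcal{O}_{X}(-C)(m) \to 0$ gives the $H^0$ identity at the level of global sections on the nose whenever $H^{1}(\PP^{3},\mathcal{O}_{\PP^{3}}(m-r)) = 0$, which holds for every $m$ (line bundle cohomology on $\PP^3$ has no $H^1$), so in fact
\[
0 \to H^{0}(\PP^{3},\mathcal{O}_{\PP^{3}}(m-r)) \to H^{0}(\PP^{3},\ii_{C}(m)) \to H^{0}(X,\mathcal{O}_{X}(-C)(m)) \to 0
\]
is exact for all $m$, and counting dimensions is immediate. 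This is the route I would actually write, since it avoids Riemann--Roch entirely and needs no range restriction.

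The only genuine point requiring care — and the step I expect to be the main obstacle to a fully rigorous write-up — is justifying the exact sequence $0 \to \mathcal{O}_{\PP^{3}}(m-r) \to \ii_{C}(m) \to \mathcal{O}_{X}(-C)(m) \to 0$: one must identify the image of $\ii_{X}(m) = \mathcal{O}_{\PP^{3}}(m-r)$ inside $\ii_{C}(m)$ with the kernel of restriction to $X$, and identify $\ii_{C}(m)|_X$ with $\mathcal{O}_X(-C)(m) = \mathcal{O}_X(mL - C)$. This is standard for $C \subseteq X$ with $X$ an integral (here smooth) surface — it is the sheaf-theoretic translation of $I_C/I_X$ being the ideal of $C$ in the homogeneous coordinate ring of $X$ — but it deserves an explicit sentence. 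Everything else is bookkeeping with the cohomology of line bundles on $\PP^3$, all of whose $H^1$ vanish, so no further obstruction arises.
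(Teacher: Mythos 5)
Your preferred (second) route is correct and is essentially the paper's own proof: the paper establishes exactness of $0\to H^{0}(\PPP,\mathcal{O}_{\PPP}(m-r))\to H^{0}(\PPP,\ii_{C}(m))\to H^{0}(X,\mathcal{O}_{X}(mL-C))\to 0$ by a diagram chase involving the multiplication-by-$T$ map ($T$ the equation of $X$), the restriction map, and the inclusions into the complete linear systems, which is precisely the global-sections incarnation of your ideal-sheaf sequence $0\to\ii_{X}(m)\to\ii_{C}(m)\to\mathcal{O}_{X}(mL-C)\to 0$ combined with $H^{1}(\PPP,\mathcal{O}_{\PPP}(m-r))=0$. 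The Cartier-divisor point you flag (identifying $\ii_{C}/\ii_{X}$ with $\mathcal{O}_{X}(-C)$) is exactly what the notation $\mathcal{O}_{X}(mL-C)$ presupposes and is covered by the smoothness assumption on $X$ in the surrounding discussion, while your first route via (\ref{eqb}) is a needless detour carrying the spurious restriction $m\geq r-3$, which you rightly discard.
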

\begin{proof}
From the last resolutions we have the following inclusions: 
\[u:H^{0}(X,\mathcal{O}_{X}(mL-C))\hookrightarrow H^{0}(X,\mathcal{O}_{X}(mL))\] 
and
\[v:H^{0}(\mathds{P}^{3},\mathscr{I}_{C}(m))\hookrightarrow H^{0}(\mathds{P}^{3},\mathcal{O}_{\mathds{P}^{3}}(mL)).\]
 On the other hand,  we can consider the restriction morphism:
 \[f:H^{0}(\mathds{P}^{3},\mathscr{I}_{C}(m))\to H^{0}(X,\mathcal{O}_{X}(mL-C)).\]
  Furthermore, if $T$ is the equation that defines the surface $X$, then we have a morphism 
  \[g: H^{0}(\mathds{P}^{3},\mathcal{O}_{\mathds{P}^{3}}(m-r)) \to H^{0}(\mathds{P}^{3},\mathscr{I}_{C}(m))\] 
 which maps a section $s$ in $H^{0}(\mathds{P}^{3},\mathcal{O}_{\mathds{P}^{3}}(m-r))$ to the element $T\cdot s$ that vanishes on $C$. These morphisms, and the short exact sequence in cohomology induced by the inclusion of the  surface $X$ in $\mathds{P}^{3}$, imply the next commutative diagram:
  
  \xymatrix{&  &  0\ar[d]& 0\ar[d]& \\ 
& H^{0}(\mathds{P}^{3},\mathcal{O}_{\mathds{P}^{3}}(m-r))\ar@{=}[d]\ar[r]^{\quad g} &  H^{0}(\mathds{P}^{3},\mathscr{I}_{C}(m))\ar[r]^{f \qquad } \ar@{^(->}[d]^{v}& H^{0}(X,\mathcal{O}_{X}(mL-C))\ar@{^(->}[d]^{u} & \\
 0 \ar[r]& H^{0}(\mathds{P}^{3},\mathcal{O}_{\mathds{P}^{3}}(m-r))\ar[r]^{\quad G} &  H^{0}(\mathds{P}^{3},\mathcal{O}_{\mathds{P}^{3}}(m))\ar[r]^{F\quad } & H^{0}(X,\mathcal{O}_{X}(mL)) \ar[r]&0 }
 
A diagram-chasing argument proves Lemma.\\
\end{proof}

Now we compute an equality that allows us to compare the global sections of the ideal sheaf of a curve to those of the ideal sheaf of a curve linked to it. This equality is useful since it allows us to know the minimal degree of a surface that contains a fixed curve if we know the minimal degree of the surfaces that contains a curve directly linked to it.

\begin{prop} \label{star}
Let $C_{r}\in{\mathscr{H}_{r}}$ be a curve and $C_{r-1}\in{\mathscr{H}_{r-1}}$ be a curve linked to $C_{r}$ by the complete intersection of two surfaces $X, X^{\prime}$ of degree $r$. Suppose that $X$ is smooth  and $m\geqslant r-3$ then:
\begin{equation} \tag{$\star$} \label{eqstar}
\begin{split}
\quad h^{0}(\mathds{P}^{3}, \mathscr{I}_{C_{r}}(m))&= h^{1}(\mathds{P}^{3}, \mathscr{I}_{C_{r}}(m))- h^{0}(\mathds{P}^{3}, \mathscr{I}_{C_{r-1}}(2r-m-4))\\
&+ \frac{r(r-m-2)(r-m-1)}{2}+ \binom{m-r+3}{3}  .
 \end{split}
\end{equation}

\end{prop}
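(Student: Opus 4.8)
The statement should follow by assembling the three equalities established just above: equation (\ref{eqa}), equation (\ref{eqb}), and Lemma \ref{lem:eq}. Since $C_r \cup C_{r-1} = X \cap X'$ with $\deg X = \deg X' = r$, both curves are divisors on the smooth surface $X$, and $C_r$ has class $rL - C_{r-1}$ with $L$ a hyperplane section. The plan is to express $h^{0}(X,\mathcal{O}_X(-C_r)(m)) - h^{1}(X,\mathcal{O}_X(-C_r)(m))$ in two ways and compare.

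First I would invoke equation (\ref{eqa}) with $D = C_{r-1}$ and $s = t = r$, so that $s + t - m - 4 = 2r - m - 4$; this gives
\[
h^{0}(X,\mathcal{O}_X(-C_r)(m)) - h^{1}(X,\mathcal{O}_X(-C_r)(m)) = -\,h^{0}\bigl(X,\mathcal{O}_X(-C_{r-1})(2r-m-4)\bigr) + \frac{r(r-m-2)(r-m-1)}{2}.
\]
Next, apply Lemma \ref{lem:eq} to the curve $C_{r-1} \subseteq X$ with the twist $2r-m-4$ in place of $m$:
\[
h^{0}\bigl(X,\mathcal{O}_X((2r-m-4)L - C_{r-1})\bigr) = h^{0}(\PP^3,\mathscr{I}_{C_{r-1}}(2r-m-4)) - h^{0}(\PP^3,\mathcal{O}_{\PP^3}(r-m-4)).
\]
Here the hypothesis $m \geq r-3$ forces $r-m-4 \leq -1$, so the last term vanishes and we simply get $h^{0}(X,\mathcal{O}_X(-C_{r-1})(2r-m-4)) = h^{0}(\PP^3,\mathscr{I}_{C_{r-1}}(2r-m-4))$.

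On the other hand, equation (\ref{eqb}) applied to $C = C_r$ (legitimate since $m \geq r-3$) yields
\[
h^{0}(X,\mathcal{O}_X(-C_r)(m)) - h^{1}(X,\mathcal{O}_X(-C_r)(m)) = -\binom{m-r+3}{3} + h^{0}(\PP^3,\mathscr{I}_{C_r}(m)) - h^{1}(\PP^3,\mathscr{I}_{C_r}(m)).
\]
Equating this with the expression obtained from (\ref{eqa}) and the simplified Lemma \ref{lem:eq}, and solving for $h^{0}(\PP^3,\mathscr{I}_{C_r}(m))$, produces exactly (\ref{eqstar}). The only points requiring care — and hence the ``main obstacle,'' such as it is — are the bookkeeping of the twists (that $D = C_{r-1}$, $s=t=r$, and that the relevant twist on $X$ is $2r-m-4$) and checking that $m \geq r-3$ is precisely what is needed both to apply (\ref{eqb}) to $C_r$ and to kill the term $h^{0}(\PP^3,\mathcal{O}_{\PP^3}(r-m-4))$ coming out of Lemma \ref{lem:eq}; no new geometric input beyond the already-proved identities is necessary.
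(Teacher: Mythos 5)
Your proposal is correct and follows exactly the route the paper takes: its proof of Proposition \ref{star} is the one-line statement that it follows from (\ref{eqa}), (\ref{eqb}) and Lemma \ref{lem:eq}, and your write-up simply fills in the same combination — (\ref{eqa}) with $D=C_{r-1}$, $s=t=r$, Lemma \ref{lem:eq} applied to $C_{r-1}$ in twist $2r-m-4$ (where $m\geq r-3$ kills the $h^{0}(\PP^{3},\mathcal{O}_{\PP^{3}}(r-m-4))$ term), and (\ref{eqb}) for $C_r$ — with the twist bookkeeping done correctly.
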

\begin{proof}
It follows from (\ref{eqa}), (\ref{eqb}) and Lemma \ref{lem:eq}.\\
\end{proof}

\begin{rmk}
The preliminaries and more details of liaison theory that we present in this section can be found in \cite{mig2}. To the best of our knowledge, Theorem \ref{acm} and Proposition  \ref{star} were not in the literature before this paper.
\end{rmk}

\section{Curves of degree 6 and genus 3} \label{sect6,3}

This Section focuses on the Hilbert scheme $\hh_{3}$ of curves in $\PP^{3}$ of degree $6$ and genus $3$. In \cite{am1}*{Thm.7} is proved that this scheme is reducible and has $3$ components. Also, he gave a description of the modules that occur as Rao modules of locally Cohen-Macaulay curves of degree $6$ and genus $3$.  Now we give a brief description of these three components including the families whose elements have each of the Rao modules that occur in the Hilbert scheme $\hh_{3}$. For this section, we denote by $M(n)$ the graded module $M$ shifted by $n$.
 
 The three components of the Hilbert scheme $\hh_{3}$ are the following:

\begin{enumerate}
    \item \textbf{The main component:} This component is the closure of the ACM curves whose dimension is $24$ and coincides with the closure of the family of smooth curves of degree $6$ and genus $3$. Since will be useful in the next sections, in this component we compute the character of postulation ($\gamma_{c}$) and speciality ($\sigma_{c})$ and the Rao function ($\rho$):
    \begin{center}
\begin{tabular}{ c| c c c}
$n$ &  $\gamma_{c}$ & $\sigma_{c}$ & $\rho$ \\ \hline 
0 &-1 &1 &0\\
1 & -1 & 1& 0  \\
2& -1& 1& 0 \\
3 &3 & -3  & 0
\end{tabular}
\end{center} 
    
    The generic element has the following minimal free resolution:
    \begin{equation*} 
 0\to \mathcal{O}_{\PP^{3}}(-4)^{3}\to \mathcal{O}_{\PP^{3}}(-3)^{4}\to  \mathscr{I}_{C} \to 0. \end{equation*} 
In this component, the generic element has trivial Rao module. Nevertheless, there exist two families of codimension one inside of this component with Rao module of length one:
    \begin{itemize}
        \item  \textbf{Hyperelliptic curves:} Let $\ccc$ be the family of curves of bidegree $(2,4)$ in a smooth quadric surface. The elements in this family are hyperelliptic curves of degree $6$ and genus $3$ and they are not ACM curves. Thus, $\ccc$ is contained in  $\oc-\cc_{3}$.  The dimension of $\ccc$ is $23$, the elements in this family have Rao module isomorphic to $k(-2)$. The character of postulation ($\gamma_{c}$) and speciality ($\sigma_{c})$ and the Rao function ($\rho$) of an element of $\ccc$ are:
          \begin{center}
\begin{tabular}{ c| c c c}
$n$ &  $\gamma_{c}$ & $\sigma_{c}$ & $\rho$ \\ \hline 
0 &-1 &1 &0\\
1 & -1 & 1& 0  \\
2& 0& 1& 1 \\
3 &0 & -3  & 0 \\
4 &3 & 0& 0\\
5 & -1& 0& 0
\end{tabular}
\end{center} 
The ideal of a generic element in it has the following  minimal free resolution:
\[ 0\to \mathcal{O}_{\mathds{P}^{3}}(-6)\to \mathcal{O}_{\mathds{P}^{3}}(-5)^{4}\to \mathcal{O}_{\mathds{P}^{3}}(-4)^{3}\oplus \mathcal{O}_{\mathds{P}^{3}}(-2)\to \mathscr{I}^{h}_{3} \to 0.\]
          \item  \textbf{Reducible curves:} Consider a plane $H$ and two  skew lines  $L_{1},L_{2}$ in $\PP^{3}$ that intersect $H$ in two different points $p_{1}$ and $p_{2}$ respectively. If $Q$  is a plane quartic on $H$ that passes through the points $p_{1}$ and $p_{2}$, then the curve $C=L_{1}\cup L_{2}\cup Q$ (see Figure \ref{antenitas}) has degree $6$ and genus $3$. Thus $C\in{\hh_{3}}$. Let $\mathscr{A}$ be the family of these curves. Note that $\aaa$ is contained in the closure of $\cc_{3}$ because such curves are smoothable by \cite{harhir}*{Cor.4.3}. The dimension of the family $\aaa$ is $23$, the elements in this family have Rao module isomorphic to $k(-1)$. The character of postulation ($\gamma_{c}$) and speciality ($\sigma_{c})$ and the Rao function ($\rho$) of an element of $\aaa$ are:
            \begin{center}
\begin{tabular}{ c| c c c}
$n$ &  $\gamma_{c}$ & $\sigma_{c}$ & $\rho$ \\ \hline 
0 &-1 &1 &0\\
1 & -1 & 2& 1  \\
2& -1& -2& 0 \\
3 &3 & 0  & 0 \\
4 &0 & -1& 0
\end{tabular}
\end{center} 
          The ideal of a generic element in it has minimal free resolution:
\[  0\to \mathcal{O}_{\mathds{P}^{3}}(-5)\to \mathcal{O}_{\mathds{P}^{3}}(-5)\oplus \mathcal{O}_{\PP^{3}}(-4)^{4}\to \mathcal{O}_{\mathds{P}^{3}}(-4)\oplus \mathcal{O}_{\mathds{P}^{3}}(-3)^{4}\to   \mathscr{I}_{C} \to 0. \]
\begin{figure}[h]
\centering
\graphicspath{ {Tesis/IMCT/} } \includegraphics[scale=.3]{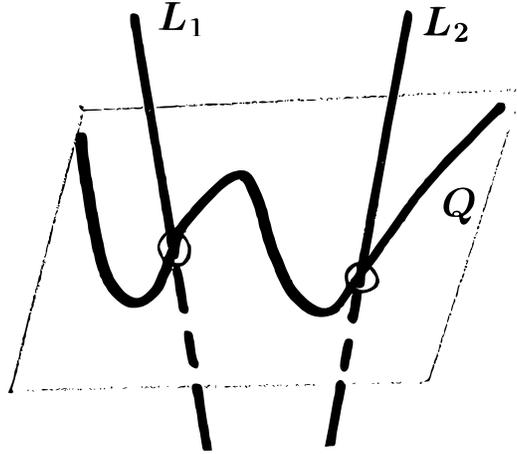}
\caption{$C=L_{1}\cup  L_{2}\cup Q$ is a generic element of $\aaa$}\label{antenitas}
\end{figure}
\end{itemize}

\item \textbf{The component of reducible curves:} We denote this component by $\mathscr{R}_{3}$, the generic element is the union of a plane quartic with a conic that intersects it at a point. The dimension of this component is $24$, the Rao module that appears for the elements of this component is $M=k[x,y,z,w](-1)/\langle x,y,z,w^{3}\rangle$ by \cite{am1}*{Thm.4} and the character of postulation of this family is $\gamma_{C}(0)=\gamma_{C}(1)=\gamma_{C}(6)=-1, \gamma_{C}(1)=1, \gamma_{C}(5)=2$ and $0$ in another case. The minimal free resolution of the ideal of an element in $\mathscr{R}_{3}$ is:
\[ 0\to\mathcal{O}_{\mathds{P}^{3}}(-7)\to\mathcal{O}_{\mathds{P}^{3}}(-6)^{3}\oplus \mathcal{O}_{\PP^{3}}(-4)\to \mathcal{O}_{\mathds{P}^{3}}(-5)^{2}\oplus \mathcal{O}_{\mathds{P}^{3}}(-3)\oplus \mathcal{O}_{\mathds{P}^{3}}(-2)\to \mathscr{I}_{C} \to 0.\]

\item \textbf{The extremal component: } By \cite{mdp1} there exists a component $\mathscr{E}_{3}$ generically nonreduced of dimension $30$, whose generic element is an extremal curve. The Rao module of a generic element in this component is $M=k[x,y,z,w](2)/\langle x,y,F,G\rangle$ with $F,G$ polynomials of degree $3$ and $7$ respectively and the character of postulation of this family is $\gamma_{C}(0)=\gamma_{C}(1)=\gamma_{C}(9)=-1, \gamma_{C}(2)=\gamma_{C}(6)=\gamma_{C}(8)=1$ and $0$ in another case. The minimal free resolution of the ideal of an element in $\mathscr{E}_{3}$ is:
\[ 0\to \mathcal{O}_{\mathds{P}^{3}}(-10)\to \mathcal{O}_{\mathds{P}^{3}}(-9)^{2}\oplus \mathcal{O}_{\PP^{3}}(-7)\to \mathcal{O}_{\mathds{P}^{3}}(-8)\oplus \mathcal{O}_{\mathds{P}^{3}}(-6)\oplus \mathcal{O}_{\mathds{P}^{3}}(-2)^{2}\to  \mathscr{I}_{C} \to 0. \]
\end{enumerate}

For the rest of the section, we focus on the main component $\oc$. Note that in the case when $r$ is $2$ we have that $\hh_{2}=\overline{\cc_{2}}$ and the only lcm curves in this space are the ACM curves. This changes in the case $r=3$, as we saw before, the closure of the ACM curves is only one of three components of $\hh_{3}$. And inside of this component, there are exactly two families of codimension one in $(\oc-\cc_{3})^{lcm}$ as the following
Lemma shows:

\begin{lem} \label{union} The set of locally Cohen-Macaulay curves in $\oc-\cc_{3}$ is equal to the set of locally Cohen-Macaulay curves in the union $\overline{\aaa}\cup \overline{\ccc}$. In other words,
$$ (\oc-\cc_{3})^{lcm}=(\overline{\aaa} \cup \overline{\ccc})^{lcm}.$$
Moreover, taking closure we have that
\begin{equation}\label{eqlcm}
   \overline{(\oc-\cc_{3})^{lcm}}=\overline{\aaa} \cup \overline{\ccc}.
\end{equation}
\end{lem}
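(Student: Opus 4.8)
The plan is to show both inclusions of the set-theoretic equality on locally Cohen–Macaulay points, and then pass to closures. For the inclusion $(\overline{\aaa}\cup\overline{\ccc})^{lcm}\subseteq(\oc-\cc_3)^{lcm}$, recall from Section~2 that a generic element of $\aaa$ has Rao module $k(-1)$ and a generic element of $\ccc$ has Rao module $k(-2)$; in either case the Rao module is nonzero, so these curves are not ACM and hence lie in $\oc-\cc_3$. Since being non-ACM is preserved under the specializations recorded by the character tables (the Rao function can only jump up under specialization, so it stays nonzero on all of $\overline{\aaa}$ and $\overline{\ccc}$), every lcm curve in $\overline{\aaa}\cup\overline{\ccc}$ lies in $(\oc-\cc_3)^{lcm}$; note $\aaa\subseteq\overline{\cc_3}=\oc$ by the smoothability reference \cite{harhir}*{Cor.4.3}, and $\ccc\subseteq\oc$ because it is stated in Section~2 that $\ccc\subseteq\oc-\cc_3$, so this inclusion makes sense inside $\oc$.

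For the reverse inclusion $(\oc-\cc_3)^{lcm}\subseteq(\overline{\aaa}\cup\overline{\ccc})^{lcm}$, the key input is the classification in \cite{am1}*{Thm.7} of the Rao modules of lcm curves of degree $6$ and genus $3$ together with the dimensions of the three components of $\hh_3$. A curve $C$ in $\oc$ that is not ACM has nonzero Rao module $M(C)$; since $C\in\oc$ and $\oc\neq\rr_3,\mathscr{E}_3$, the module $M(C)$ cannot be the module $k[x,y,z,w](-1)/\langle x,y,z,w^3\rangle$ of $\rr_3$ nor the module of $\mathscr{E}_3$ (those have larger length, and a curve whose Rao module specializes to one of those would have to lie on the corresponding other component, not on $\oc$). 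Hence $M(C)$ has length one, so $M(C)\cong k(-a)$ for some $a$, and a cohomology/postulation count — using that $\deg=6$, $g=3$, and $h^0(\ii_C(2))$ is either $0$ or $1$ — forces $a\in\{1,2\}$. The case $a=1$ identifies $C$, via its minimal free resolution and the fact that $M(C)=k(-1)$ together with the deformation theory of \cite{mdp2}, as a limit of curves of type $L_1\cup L_2\cup Q$, i.e.\ $C\in\overline{\aaa}$; the case $a=2$ identifies $C$ as a limit of bidegree $(2,4)$ curves on a smooth quadric, i.e.\ $C\in\overline{\ccc}$. This uses that within the ambient component $\oc$, the locus with fixed Rao module $k(-a)$ is irreducible of the stated dimension $23$ (this follows from \cite{mdp2}*{VII} applied to the structure recorded in the Remark after the definition of linked families), and that $\aaa$, respectively $\ccc$, is an open dense subset of that locus.

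Finally, to deduce~(\ref{eqlcm}) I take closures in the set-theoretic equality $(\oc-\cc_3)^{lcm}=(\overline{\aaa}\cup\overline{\ccc})^{lcm}$. The right-hand side closes up to $\overline{\aaa}\cup\overline{\ccc}$ because $\aaa$ and $\ccc$ consist of lcm curves — indeed $\aaa$ parametrizes nodal, hence lcm, unions, and $\ccc$ parametrizes smooth curves — so $\aaa\subseteq(\overline{\aaa})^{lcm}$ and $\ccc\subseteq(\overline{\ccc})^{lcm}$ are dense, giving $\overline{(\overline{\aaa}\cup\overline{\ccc})^{lcm}}=\overline{\aaa}\cup\overline{\ccc}$. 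This yields $\overline{(\oc-\cc_3)^{lcm}}=\overline{\aaa}\cup\overline{\ccc}$, as claimed.

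**Main obstacle.** The delicate point is the reverse inclusion: ruling out that a non-ACM lcm curve in $\oc$ could have Rao module other than $k(-1)$ or $k(-2)$, and then matching the two remaining cases precisely with $\overline{\aaa}$ and $\overline{\ccc}$ rather than with some a priori different locus of the same Rao module and dimension. This is where one must invoke the irreducibility of the relevant flag/Rao stratum from \cite{mdp2}*{VII} and check that $\aaa$ and $\ccc$ are dense in their strata, using the explicit minimal free resolutions listed above to pin down the generic behaviour.
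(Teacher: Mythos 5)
Your first inclusion and your treatment of the closure statement are fine (the closure step is in fact a point the paper's own proof leaves implicit), but the reverse inclusion contains a genuine gap. The step ``hence $M(C)$ has length one'' is not justified, and it is not true for every locally Cohen--Macaulay curve in $\oc-\cc_{3}$: the components of $\hh_{3}$ intersect, and the paper itself records, immediately after this lemma and citing \cite{am1}*{Lem.9}, that $\overline{\ccc}\cap\rr_{3}\neq\emptyset$ and that $\overline{\ccc}$ and $\overline{\aaa}$ meet $\mathscr{E}_{3}$; lcm curves in these intersections lie in $\oc-\cc_{3}$ but carry the larger Rao modules attached to $\rr_{3}$ and $\mathscr{E}_{3}$, not a module of the form $k(-a)$. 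Your reason for excluding those modules --- ``a curve whose Rao module specializes to one of those would have to lie on the corresponding other component, not on $\oc$'' --- is exactly the flawed step: lying on $\rr_{3}$ or $\mathscr{E}_{3}$ does not prevent a curve from also lying on $\oc$. As a result, your strategy of stratifying by the isomorphism type of the Rao module and matching the $k(-1)$ and $k(-2)$ strata with $\aaa$ and $\ccc$ only accounts for curves whose Rao module has length one; it says nothing about the boundary curves of $\overline{\aaa}$ and $\overline{\ccc}$, nor about whatever else might a priori sit in $\oc\cap\rr_{3}$ or $\oc\cap\mathscr{E}_{3}$, and that is precisely the delicate part of the statement.

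The paper sidesteps this by invoking the stronger form of Ait Amrane's result: \cite{am1}*{Thm.7} is used as saying directly that \emph{every} lcm curve in $\oc$ with non-trivial Rao module lies in $(\overline{\aaa})^{lcm}$ or $(\overline{\ccc})^{lcm}$, i.e.\ the classification already locates such curves inside the main component, including those whose Rao module has length greater than one. To repair your argument you would need an additional step showing that every lcm curve of $\oc\cap\rr_{3}$ and of $\oc\cap\mathscr{E}_{3}$ is a limit of curves in $\aaa$ or $\ccc$; as written, your proof does not establish $(\oc-\cc_{3})^{lcm}\subseteq(\overline{\aaa}\cup\overline{\ccc})^{lcm}$. (Secondarily, the claims that the Rao strata are irreducible of dimension $23$ via \cite{mdp2}*{VII} and that $\aaa$, $\ccc$ are dense in them are asserted rather than proved, but the main issue is the one above.)
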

\begin{proof}The union $(\overline{\aaa} \cup \overline{\ccc})^{lcm}$ is contained in $(\oc-\cc_{3})^{lcm}$ since the elements in $\oc-\cc_{3}$ are curves with non-trivial Rao module. By \cite{am1}*{Thm.7} any curve in $(\oc)^{lcm}$ with non-trivial Rao module is contained in $(\overline{\aaa})^{lcm} $ or $(\overline{\ccc})^{lcm}$. Then $(\oc- \cc_{3})^{lcm}\subseteq (\overline{\aaa} \cup \overline{\ccc})^{lcm}$.\\
\end{proof}

Let $N^{1}_{\mathds{Z}}(\oc)$ be the quotient of the Cartier divisors on $\oc$  modulo numerical equivalence and $N^{1}(\oc)$ be the tensor product of $N^{1}_{\mathds{Z}}(\oc)$ with the field of the real numbers $\mathds{R}$. We do not know the behavior of $\oc$ outside of the family of locally Cohen-Macaulay curves, thus we can not claim that the dimension of $N^{1}(\oc)$ is finite. But the equality (\ref{eqlcm}) allows us to consider $\mathscr{B}$ the normalization of the scheme $\overline{\aaa}\cup\overline{\ccc} \cup \cc_{3}$ as a partial compactification of $\cc_{3}$. In this case $N^{1}(\mathscr{B})$ has finite dimension.

Furthermore, we claim that the classes of the families $\overline{\ccc}$ and $\overline{\aaa}$ are linearly independent in $N^{1}(\mathscr{B})$. To show this, we begin  by defining a birational map: for a generic element in $\cc_{3}$, its ideal is generated by exactly $4$ linearly independent cubics. These four cubics define a linear subspace of dimension $3$ in the space of cubics $\PP^{19}$. Thus we can define the map:

\[ \xymatrix{h: \mathscr{B} \ar@{-->}[r]& \mathds{G}(3,19)} \]
\[\xymatrix{& & C \ar@{|->}[r] &\mathds{P}(H^{0}(\mathds{P}^{3},\mathscr{I}_{C}(3)))}. \]

The map $h$ is not a morphism: by \cite{am1}*{Lem.9}, the intersection of $\overline{\ccc}$ with $\rr_{3}$ is non-empty and the  general element in this intersection has five linearly independent cubics. Similarly, the intersection of $\overline{\ccc}$ and $\overline{\aaa}$ with $\mathscr{E}_{3}$ is not empty and the ideals of elements in this intersection have $7$  linearly independent cubics. Nevertheless, this occurs outside of $\cc_{3}\cup \ccc \cup \mathscr{A}$ which implies that $h$ is well defined over $\mathscr{B}$ (or $\oc$) in codimension one.

On the other hand, observe that for a generic $C$ in $\cc_{3}$, the four defining cubics determine $C$ scheme-theoretically. This implies that the map $h$ is birational. 

We aim to show that this map contracts the divisors $\overline{\ccc}$ and $\overline{\aaa}$. In order to prove this we need the following two Lemmas:

\begin{lem} \label{lema31}
 The map $h$ contracts $\ccc$ to a family of dimension $9$.
\end{lem}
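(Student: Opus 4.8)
The plan is to parametrize $\ccc$ explicitly and compute the image of the rational map $h$ directly. A curve $C \in \ccc$ is determined by the data of a smooth quadric $Q \subseteq \PPP$ together with a divisor of bidegree $(2,4)$ on $Q$; since the ruling label is a choice, up to that finite ambiguity $C$ corresponds to a point of the space of smooth quadrics (dimension $9$) and a divisor class realized by an effective member. From the minimal free resolution displayed above for a generic element of $\ccc$, namely
\[ 0\to \mathcal{O}_{\mathds{P}^{3}}(-6)\to \mathcal{O}_{\mathds{P}^{3}}(-5)^{4}\to \mathcal{O}_{\mathds{P}^{3}}(-4)^{3}\oplus \mathcal{O}_{\mathds{P}^{3}}(-2)\to \mathscr{I}^{h}_{3} \to 0, \]
one reads off that $h^{0}(\PPP,\ii_{C}(3)) = 4$: the cubics in the ideal are exactly $q\cdot H^{0}(\mathcal{O}_{\PPP}(1))$, where $q$ is the unique quadric containing $C$ (the degree $2$ generator of the resolution) and $H$ ranges over the $4$-dimensional space of linear forms. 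Thus the linear system $\PP(H^{0}(\PPP,\ii_{C}(3)))$ is precisely the net of cubics $\{ q \cdot \ell : \ell \in H^{0}(\mathcal{O}_{\PPP}(1))\}$, a $3$-plane in $\PP^{19}$ that depends only on $q$, not on the chosen $(2,4)$-divisor.

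The key computational step is therefore the verification that $H^{0}(\PPP,\ii_{C}(3)) = q \cdot H^{0}(\mathcal{O}_{\PPP}(1))$ for generic $C \in \ccc$. This follows from the resolution, but can also be seen directly: any cubic $F$ vanishing on $C$ restricts to a section of $\mathcal{O}_{Q}(3,3)(-C) = \mathcal{O}_{Q}(1,-1)$ on $Q$, and $h^{0}(Q,\mathcal{O}_{Q}(1,-1)) = 0$, so $F$ must vanish on $Q$, i.e. $F \in q \cdot S_{1}$; conversely every such $F$ vanishes on $C \subseteq Q$. Hence $h(C) = \PP(q\cdot S_{1}) \in \mathds{G}(3,19)$ depends only on $q$.

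Consequently $h|_{\ccc}$ factors through the map $C \mapsto q(C)$ sending $C$ to the quadric containing it, whose image is an open subset of the space of smooth quadrics $\PP^{9}$, of dimension $9$; the induced map from that space to $\mathds{G}(3,19)$, $q \mapsto \PP(q\cdot S_{1})$, is injective (one recovers $q$ as the gcd of the net of cubics). Therefore $h(\ccc)$ has dimension $9$, and since $\dim \ccc = 23$, the map $h$ contracts $\ccc$. The main obstacle is purely the control of $H^{0}(\PPP,\ii_{C}(3))$ for \emph{every} curve in a dense open subset of $\ccc$ rather than just the generic one — but this is guaranteed by upper semicontinuity of $h^{0}$ together with the fact, recorded in the discussion preceding the Lemma, that the loci where the cubic count jumps ($\rr_{3}$, $\mathscr{E}_{3}$) meet $\ccc$ only in $\overline{\ccc}\setminus\ccc$, so $h$ is a genuine morphism on $\ccc$ and the fiber dimension computation above is valid there.
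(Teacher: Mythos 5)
Your proposal is correct and follows essentially the same route as the paper: both arguments show that $h(C)=\PP(q\cdot H^{0}(\mathcal{O}_{\PPP}(1)))$ depends only on the quadric $q$ containing $C$, and identify the image with the $9$-dimensional family of such nets parametrized by quadrics. The only differences are cosmetic — you justify $H^{0}(\PPP,\ii_{C}(3))=q\cdot S_{1}$ by restriction to $\mathcal{O}_{Q}(1,-1)$ and get dominance from the nonemptiness of $|\mathcal{O}_{Q}(2,4)|$, whereas the paper asserts the cubic count and exhibits an explicit $(2,4)$-curve residual to two ruling lines on an arbitrary smooth quadric.
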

\begin{proof}
Given a quadric in $\mathds{P}_{[x,y,z,w]}^{3}$ we can obtain an element of  the Grasmannian of $3$-planes in the space $\PP^{19}$ of cubics in $\mathds{P}_{[x,y,z,w]}^{3}$ by taking the subspace generated by the quadric multiplied by the four variables $x,y,z,w$. Thus we can define a morphism:
\begin{align*} f&:\mathds{P}(H^{0}(\mathds{P}^{3},\mathcal{O}_{\mathds{P}^{3}}(2))) \to \mathds{G}(3,19)\\
 &\quad q \longmapsto V_{q}:=\langle q\cdot x,q\cdot y,q\cdot z,q\cdot w \rangle.
 \end{align*}
 Set $Q:=Im\,f$. Observe that $f$ is an embedding, thus $Q$ is a subvariety of $\mathds{G}(3,19)$ of dimension $9$. 

The cubics that contain a curve $C$ of $\ccc$ are multiples of the smooth quadric $q_{0}$ that contains $C$. In particular, the elements $q_0\cdot x, q_0\cdot y, q_0\cdot z$ and $q_0\cdot w$ generate the space $H^{0}(\PPP,\mathscr{I}_{C}(3))$. Thus, by the definition of $h$, we have that $h(\ccc)\subseteq Q$. Let us argue that in fact $h$ is dominant onto $Q$. Let $q$ be a smooth quadric in $\mathds{P}(H^{0}(\mathds{P}^{3},\mathcal{O}_{\mathds{P}^{3}}(2)))$. Let $L$ and $L^{\prime}$ be two lines in the same ruling of $q$ and $p$ an irreducible quartic that contains them. The residual curve to $L\cup L^{\prime}$ in the intersection of $q$ and $p$ is a curve of bidegree $(2,4)$ in $q$ and therefore an element of $\ccc$, thus $f(q)\in{h(\ccc)}$. That implies that $\overline{h(\ccc)}=Q$. \\
 
\end{proof}

\begin{lem}\label{lema32}
 The map $h$ contracts $\aaa$ to a family of dimension $11$.
\end{lem}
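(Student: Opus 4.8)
The plan is to show that $h(C)$ depends only on the plane $H\supseteq Q$ and the pair of skew lines $L_{1},L_{2}$ — not on the quartic $Q$ — and then to observe that the triples $(H,L_{1},L_{2})$ form an $11$-dimensional family on which the induced map to $\mathds{G}(3,19)$ is generically injective; this yields $\dim\overline{h(\aaa)}=11<23=\dim\aaa$.

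The first step is to compute $H^{0}(\PP^{3},\mathscr{I}_{C}(3))$ for a generic $C=L_{1}\cup L_{2}\cup Q\in\aaa$. Writing $H=\{\ell=0\}$, I would argue that any cubic $F$ vanishing on $C$ is divisible by $\ell$: indeed $F|_{H}$ is a cubic form on $H\cong\PP^{2}$ vanishing on the plane quartic $Q$, and since $\deg Q=4>3$ this forces $F|_{H}\equiv 0$. Writing $F=\ell\cdot G$ with $G$ a quadric, and using that $\ell$ does not vanish identically on $L_{i}$ (it meets $L_{i}$ only at $p_{i}$), one gets that $G$ passes through $L_{1}\cup L_{2}$; conversely every such $\ell\cdot G$ lies in $H^{0}(\mathscr{I}_{C}(3))$. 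Thus multiplication by $\ell$ gives
\[ H^{0}(\PP^{3},\mathscr{I}_{C}(3))=\ell\cdot H^{0}(\PP^{3},\mathscr{I}_{L_{1}\cup L_{2}}(2)), \]
whence $h^{0}(\mathscr{I}_{L_{1}\cup L_{2}}(2))=h^{0}(\mathscr{I}_{C}(3))=4$ (the last value read from the minimal free resolution of $\mathscr{I}_{C}$ recalled above), and $h(C)=\PP\big(\ell\cdot H^{0}(\PP^{3},\mathscr{I}_{L_{1}\cup L_{2}}(2))\big)$ is a point of $\mathds{G}(3,19)$ determined by the triple $(H,L_{1},L_{2})$ alone.

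For the dimension count: $H$ varies in $\check{\PP}^{3}$ and each $L_{i}$ in $\mathds{G}(1,3)$, so the relevant triples (with $L_{i}\not\subseteq H$, the $p_{i}$ distinct, $L_{1},L_{2}$ skew) form a family of dimension $3+4+4=11$, and each one arises from some $C\in\aaa$ by choosing any quartic in $H$ through $p_{1}=L_{1}\cap H$ and $p_{2}=L_{2}\cap H$. To rule out a further collapse I would recover $(H,L_{1},L_{2})$ from $h(C)$ as its base locus: for generic skew lines the linear system $\PP(H^{0}(\mathscr{I}_{L_{1}\cup L_{2}}(2)))$ has base locus exactly $L_{1}\cup L_{2}$ (an immediate computation in coordinates, e.g.\ $L_{1}=\{x=y=0\}$, $L_{2}=\{z=w=0\}$, where $\mathscr{I}_{L_{1}\cup L_{2}}=(xz,xw,yz,yw)$), so the base locus of the system $h(C)=\ell\cdot H^{0}(\mathscr{I}_{L_{1}\cup L_{2}}(2))$ is $H\cup L_{1}\cup L_{2}$, from which $H$ (its two-dimensional component) and then $L_{1},L_{2}$ (distinguished by the distinct points at which they meet $H$) are read off. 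Hence the parametrizing map $(H,L_{1},L_{2})\mapsto h(C)$ is generically injective and $\dim\overline{h(\aaa)}=11$; equivalently, the generic fibre of $h|_{\aaa}$ is the $12$-dimensional linear system of quartics in $H$ through $p_{1},p_{2}$, consistent with $\dim\aaa=23$.

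The step demanding the most care is the first one — the assertion that every cubic containing the \emph{scheme} $C$, in particular along the locally complete intersection node of $C$ at each $p_{i}$, is divisible by $\ell$ — which is exactly the degree argument on $H$; together with the auxiliary facts that $L_{1},L_{2}$ impose independent conditions on quadrics and that the quadrics through them have base locus precisely $L_{1}\cup L_{2}$. All of these are elementary, but they must be verified for the generic (hence for all nearby) members of $\aaa$ so that the dimension conclusion about the image is legitimate. A secondary point, already granted before the statement, is that the locus where $h^{0}(\mathscr{I}_{C}(3))>4$ — where $h$ is not defined — avoids $\cc_{3}\cup\ccc\cup\aaa$, so that $h|_{\aaa}$ and hence $h(\aaa)$ are meaningful.
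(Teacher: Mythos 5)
Your proof is correct and follows essentially the same route as the paper: both identify $h(C)$ with the $3$-plane $\ell\cdot H^{0}(\PP^{3},\mathscr{I}_{L_{1}\cup L_{2}}(2))$ determined by the plane and the two lines, and compute the closure of the image as the $11$-dimensional family parametrized (generically injectively) by triples $(H,L_{1},L_{2})$, with surjectivity obtained by choosing a quartic in $H$ through the two intersection points. The only difference is that you supply details the paper leaves implicit — the degree argument forcing divisibility by $\ell$ and the base-locus argument for generic injectivity — which strengthens rather than changes the argument.
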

\begin{proof}
To prove this we give an auxiliary set-theoretic map that helps us understand the image under $h$ of the family $\aaa$: given two lines in general position $L$ and $L^{\prime}$ in $\PP^{3}$, there exist exactly four linearly independent quadric surfaces that contain them. Thus for any linear equation $F$ of $\PP^{3}$ we can multiply $F$ by these quadrics and obtain a $3$-space in the space of cubics in $\PP^{3}$. Therefore, we have the following map:
\begin{align*} g&:H^{0}(\PP^{3},\mathcal{O}_{\PP^{3}}(1))\times \mathds{G}(1,3)^{2}\dashrightarrow \mathds{G}(3,19)\\
 &\quad (F, L, L^{\prime}) \longmapsto \PP( F\cdot H^{0}(\PP^{3},\mathscr{I}_{L\cup L^{\prime}}(2)))
 \end{align*}
 where $F\cdot H^{0}(\PP^{3},\mathscr{I}_{L\cup L^{\prime}}(2))$ denotes the subspace generated by the four quadrics that contain the lines $L$ and $L^{\prime}$ multiplied by the linear equation $F$. The map $g$ is generically injective, then the subvariety $U:=\overline{Im\ g}$ has dimension $11$.\\
 Since a generic curve $C$ in $\aaa$ is the union of a plane quartic $q_{0}$ and two skew lines, each of them intersecting $q_{0}$ at a point, then the cubics in $H^{0}(\PP^{3}, \mathscr{I}_{C}(3))$ must  contain the plane $F$ that contains $q_{0}$. Consequently, any of such cubics have to be $F\cdot Q$ for some $Q$ of degree $2$ that contains the two lines of $C$. This implies $h(\aaa)\subseteq U$. Additionally, given a generic element $g(F,L,L^{\prime})\in{U}$, let $q$ be a general plane quartic in $F$ that passes through the points of intersection of $L$ and $L^{\prime}$ with $F$. Then $q$ union the lines $L$ and $L^{\prime}$ is an element of $\aaa$. That implies $\overline{h(\aaa)}=U$. \\
 \end{proof}

Now we are able to prove the main result of this section:

\begin{thm} \label{li}
The classes of $\overline{\aaa}$ and $\overline{\ccc}$ in $N^{1}(\mathscr{B})$ are linearly independent and each of them spans an extremal ray of the effective cone
\[  \text{Eff}(\mathscr{B})\subseteq N^1(\mathscr{B}).\]
\end{thm}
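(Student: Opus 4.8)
The plan is to exploit the birational map $h\colon\mathscr{B}\dashrightarrow\mathds{G}(3,19)$ together with the two contraction Lemmas \ref{lema31} and \ref{lema32}. Recall that $h$ is birational and, by the discussion preceding Lemma \ref{lema31}, it is a well-defined morphism in codimension one on $\mathscr{B}$; in particular its exceptional locus has no component of codimension one other than possibly $\overline{\ccc}$ and $\overline{\aaa}$. Since $h(\overline{\ccc})=Q$ has dimension $9$ while $\overline{\ccc}$ has dimension $23$, and $h(\overline{\aaa})=U$ has dimension $11$ while $\overline{\aaa}$ has dimension $23$, both divisors are genuinely contracted by $h$. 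The strategy is then the standard one for producing extremal effective divisors from a birational contraction: if $D$ is an irreducible divisor contracted by a birational map $h$ whose indeterminacy is resolved by a morphism $p\colon\widetilde{\mathscr{B}}\to\mathscr{B}$ with $q\colon\widetilde{\mathscr{B}}\to\mathds{G}(3,19)$, then the strict transform $\widetilde D$ is $q$-exceptional, hence covered by curves $\xi$ contracted by $q$ with $\widetilde D\cdot\xi<0$ and $E\cdot\xi\geq 0$ for every other $q$-exceptional prime divisor $E$; pushing such a curve class down through $p$ (it is not $p$-contracted, since $p$ is an isomorphism away from higher codimension) gives a curve class $\eta$ in $\mathscr{B}$ with $\overline{\ccc}\cdot\eta<0$ and $\overline{\aaa}\cdot\eta\geq 0$, and symmetrically a class $\eta'$ with $\overline{\aaa}\cdot\eta'<0$, $\overline{\ccc}\cdot\eta'\geq 0$.

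With these two test curve classes in hand, both linear independence and extremality are formal. For linear independence: if $a[\overline{\aaa}]+b[\overline{\ccc}]=0$ in $N^1(\mathscr{B})$, pair with $\eta$ to get $b(\overline{\ccc}\cdot\eta)=0$, so $b=0$, and pair with $\eta'$ to get $a=0$. For extremality of, say, the ray spanned by $[\overline{\ccc}]$: suppose $[\overline{\ccc}]=[D_1]+[D_2]$ with $D_i$ effective. Pairing with $\eta$ gives $D_1\cdot\eta+D_2\cdot\eta=\overline{\ccc}\cdot\eta<0$, so (after relabeling) $D_1\cdot\eta<0$; since $\eta$ moves in a family sweeping out $\overline{\ccc}$, any effective divisor meeting $\eta$ negatively must contain $\overline{\ccc}$ as a component, so $D_1=\overline{\ccc}+D_1'$ with $D_1'$ effective. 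Substituting back, $D_2+D_1'\equiv 0$, and since $D_2+D_1'$ is effective and pairs to $\geq 0$ with an ample class, it is numerically trivial; hence $[D_1],[D_2]$ are both proportional to $[\overline{\ccc}]$, which is what extremality requires. The same argument with $\eta'$ handles $[\overline{\aaa}]$.

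The main obstacle is the first paragraph: making rigorous the passage from "$h$ contracts $D$" to "there is a curve $\eta$ through a general point of $D$ with $D\cdot\eta<0$ and $\overline{\aaa}\cdot\eta\geq 0$." This requires a resolution $p\colon\widetilde{\mathscr{B}}\to\mathscr{B}$ of the indeterminacy of $h$ on which $\overline{\ccc}$ and $\overline{\aaa}$ have well-behaved strict transforms, control of the (possibly singular, possibly only partially compactified) space $\mathscr{B}$ so that intersection numbers of divisors with curves make sense — here one uses that $\mathscr{B}$ is normal and that $N^1(\mathscr{B})$ is finite-dimensional, and works with Cartier divisors or $\mathds{Q}$-Cartier divisors as needed — and the negativity-of-contracted-loci lemma (e.g. in the style of the standard fact that a divisor contracted by a birational morphism is covered by curves on which it is negative). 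One also needs to check that $\overline{\ccc}$ and $\overline{\aaa}$ are the only divisors contracted by $h$, which follows from the codimension-one statement already established: $h$ is an isomorphism over its image away from $Q\cup U$, and $\dim Q,\dim U<\dim\mathscr{B}-1$, so outside $\overline{\ccc}\cup\overline{\aaa}$ no divisor is contracted. Once this geometric input is set up, the numerical arguments above are routine.
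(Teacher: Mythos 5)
Your overall strategy is the same as the paper's: use Lemmas \ref{lema31} and \ref{lema32} to exhibit $\overline{\aaa}$ and $\overline{\ccc}$ as exceptional prime divisors of the birational map $h$ (birational onto its image inside $\mathds{G}(3,19)$), and then deduce independence and extremality from general facts about divisors contracted by a birational contraction. The paper performs this second step by citing \cite{oka}*{Lem.2.7} and \cite{yuke}*{Lem.1.6}; you instead try to prove it by hand with test curves, and that is exactly where there is a genuine gap. On a resolution $p\colon\widetilde{\mathscr{B}}\to\mathscr{B}$, $q\colon\widetilde{\mathscr{B}}\to\overline{\operatorname{Im}h}$, the standard negativity fact does give curves $\xi$ covering the strict transform $\widetilde{D}$ of $D=\overline{\ccc}$ with $\widetilde{D}\cdot\xi<0$. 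But downstairs one has $\overline{\ccc}\cdot p_{*}\xi=p^{*}\overline{\ccc}\cdot\xi=\widetilde{D}\cdot\xi+F\cdot\xi$, where $F$ is an \emph{effective} $p$-exceptional divisor, and $F\cdot\xi$ can be strictly positive: the curves $\xi$ sweep out $\widetilde{D}$, so they cannot in general be chosen to avoid $F\cap\widetilde{D}$ (inside a positive-dimensional fiber of $q|_{\widetilde{D}}$ every curve through a general point may be forced to meet this locus). Hence the sign of $\overline{\ccc}\cdot\eta$ for $\eta=p_{*}\xi$ is not controlled, and the existence of the test classes $\eta,\eta'$ --- the entire engine of your second paragraph --- is not established. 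This negativity-on-the-base statement is precisely the content the paper delegates to the cited lemmas, whose proofs go through the negativity lemma and movable curve classes (e.g.\ $p_{*}(q^{*}A)^{\dim\mathscr{B}-1}$) on the resolution, not through pushing down negative curves; your sketch does not recover it.

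Secondary points, all subordinate to the above: the linear-independence deduction as written uses $\overline{\aaa}\cdot\eta=0$, whereas you only establish $\overline{\aaa}\cdot\eta\geq 0$ (this is repairable: a relation with coefficients of the same sign is excluded by pairing with a power of an ample class, and a relation $a[\overline{\aaa}]=c[\overline{\ccc}]$ with $a,c>0$ contradicts the sign pattern against $\eta$); in the extremality step the coefficient of $\overline{\ccc}$ in $D_{1}$ need not be $1$, so one should write $D_{1}=c\,\overline{\ccc}+D_{1}'$ with $c>0$ and argue $c+ (\text{coefficient from }D_2)=1$; and all the intersection numbers presuppose that $\overline{\aaa}$ and $\overline{\ccc}$ are $\QQ$-Cartier on the merely normal, only partially compactified $\mathscr{B}$, and that an ample class is available --- hypotheses you flag but do not secure (the paper shares this looseness, since it applies lemmas stated for normal projective varieties). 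None of these smaller issues would matter if the test curves of your first paragraph were actually produced; as it stands, the key step is missing rather than merely unpolished.
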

\begin{proof}
A birational map $\varphi:X -\to Y$ between normal projective varieties is called \textit{contracting} if the inverse map $\varphi^{-1}$ does not contract any divisors. Let  $exc(\varphi)$ be the subcone of $\textit{Eff}(X)$ spanned by the $\varphi$-exceptional effective divisors. By \cite{oka}*{Lem.2.7} the extremal rays of this cone are generated by the $\varphi$-exceptional prime divisors. By \cite{yuke}*{Lem.1.6} $exc(\varphi)$ is an extremal face of the effective cone and therefore the $\varphi$-exceptional prime divisors are extremal rays of $\textit{Eff}(X)$. On the other hand, since two different prime divisors have different supports, the second part of \cite{oka}*{Lem.2.7} proves that any two $\varphi$-exceptional prime divisors are linearly independent. In our case, lemmas \ref{lema31} and \ref{lema32} prove that the birational map $h$ is a divisorial contraction that contracts the divisors $\overline{\aaa}$ and $\overline{\ccc}$ onto the subscheme $\overline{Im\,f}\cup \overline{Im\,g}$. This implies that the divisors  $\overline{\aaa}$ and $\overline{\ccc}$ are $h$-exceptional and the Theorem follows.\\
\end{proof}

Observe that for each $r\geq 3$ we can define the map
\[ \xymatrix{h_{r}: \overline{\cc_{r}} \ar@{-->}[r]& \mathds{G}(r,\binom{r+3}{3})} \]
\[\xymatrix{& & C \ar@{|->}[r] &\mathds{P}(H^{0}(\mathds{P}^{3},\mathscr{I}_{C}(r)))}. \]
Nevertheless, we cannot repeat the argument of Theorem \ref{li}. In general, we do not know if the map $h_{r}$ is well defined in codimension one and which divisors are contracted by $h_{r}$.
\\

The definition of $\mathscr{B}$ and Theorem \ref{li} tell us that $N^{1}(\mathscr{B})$ is generated by the classes of $\overline{\aaa}$, $\overline{\ccc}$ and generators of $N^{1}(\cc_{3})$.
In order to compute the dimension of the space $N^{1}(\cc_{3})$, we consider the incidence variety  $W_{r}$ as in remark \ref{incidence}. For a generic element $C_{r+1}$ in $\cc_{r+1}$, we can identify any element $(C_{r},C_{r+1})$ of the fiber $\pi_{1}^{-1}(C_{r+1})$ with the $2$-plane of surfaces of degree $r+1$ in $H^{0}(\PP^{3},\mathscr{I}_{C_{r+1}}(r+1))$ that determines the union $C_{r}\cup C_{r+1}$. Since the curves in $\cc_{r+1}$ are integral and are not contained in surfaces of degree $r$, these surfaces do not have common components. Thus we have that $$\pi_{1}^{-1}(C_{r+1})\cong G(2,H^{0}(\PP^{3},\mathscr{I}_{C_{r+1}}(r+1))),$$ hence $dim_{\mathds{R}}N^{1}(\cc_{r+1})+1=dim_{\mathds{R}}N^{1}(W_{r})$, \cite{3264}*{pp. 346}.

On the other hand, for a generic element $C_{r}$ of $\cc_{r}$ we can identify any element $(C_{r},C_{r+1})$ of the fiber $\pi_{0}^{-1}(C_{r})$ with the $2$-plane of surfaces of degree $r+1$ in $H^{0}(\PP^{3},\mathscr{I}_{C_{r}}(r+1))$ that determine the union $C_{r}\cup C_{r+1}$. However, there exist elements in $G(2,H^{0}(\PP^{3},\mathscr{I}_{C_{r}}(r+1)))$ whose surfaces have common components. Thus we can identify the fiber with a open set of $G(2,H^{0}(\PP^{3},\mathscr{I}_{C_{r}}(r+1)))$. Therefore, $W_{r}$ is an open set of an incidence correspondence  $\xymatrix{\widetilde{\mathit{W}}_{r}\ar[r]^{\pi}& \cc_{r}}$ that has as fiber the Grasmannian $G(2,H^{0}(\PP^{3},\mathscr{I}_{C_{r}}(r+1)))$ and such that  $\pi|_{W_{r}}=\pi_{0}$. Then $dim_{\mathds{R}}N^{1} (W_{r})\leq dim_{\mathds{R}}N^{1}(\widetilde{\mathit{W}}_{r})= dim_{\mathds{R}}N^{1}(\cc_{r})+1$. 

These inequalities allow us to write the following:
\begin{equation}\label{picnumb}
dim_{\mathds{R}}N^{1}(\cc_{r+1})\leq dim_{\mathds{R}}N^{1}(\cc_{r}).
\end{equation}
\noindent 
\begin{prop} \label{proppic} For all $r$, we have that $dim_{\mathds{R}}\ N^{1}(\cc_{r})=1$.
\end{prop}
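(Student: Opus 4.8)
The plan is to establish $\dim_{\mathds{R}} N^{1}(\cc_{r}) = 1$ by reducing, via the inequality (\ref{picnumb}), to the single case $r=2$. Indeed, (\ref{picnumb}) gives the chain $\dim_{\mathds{R}} N^{1}(\cc_{r}) \leq \dim_{\mathds{R}} N^{1}(\cc_{r-1}) \leq \cdots \leq \dim_{\mathds{R}} N^{1}(\cc_{2})$, so it suffices to prove that $\dim_{\mathds{R}} N^{1}(\cc_{2}) = 1$ together with the a priori lower bound $\dim_{\mathds{R}} N^{1}(\cc_{r}) \geq 1$ for every $r$. The lower bound is immediate: $\cc_{r}$ is an irreducible quasi-projective variety of positive dimension $2r(r+1)$ that is not complete, but the restriction of $\mathcal{O}(1)$ from any projective embedding of the ambient Hilbert scheme, or equivalently any non-trivial effective divisor coming from a hyperplane section, produces a nonzero class, so $N^{1}(\cc_{r}) \neq 0$; alternatively, one can note that $\cc_{r}$ is rational but not affine, which already forces the Néron–Severi–type group to be nonzero. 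Thus the whole statement collapses to computing $N^{1}(\cc_{2})$.

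For the base case, recall from the excerpt that $\hh_{2} = \overline{\cc_{2}}$ is the smooth irreducible $12$-dimensional component of the Hilbert scheme of twisted cubics, and $\cc_{2}$ is its open dense subset of ACM curves (in fact, since every lcm curve of degree $3$ and genus $0$ in this component is ACM, $\cc_{2}$ is the locus of curves with the expected resolution $0 \to \mathcal{O}(-3)^{2} \to \mathcal{O}(-2)^{3} \to \mathscr{I}_{C} \to 0$). The Picard group of the twisted cubic component is known to be of rank $2$ (it is generated, for instance, by the divisor of cubics meeting a fixed line and the divisor of cubics lying in a special position with respect to a fixed plane — see the references \cite{ps}, \cite{ct}, \cite{dc} cited in the excerpt), but $\cc_{2}$ is obtained from $\hh_{2}$ by removing a divisor, namely the locus of non-ACM curves, equivalently the locus of curves whose cubic generators fail to be linearly independent in the expected way — this is precisely the boundary divisor analogous to $\rr_{3}$ and $\aaa, \ccc$ in the $r=3$ analysis. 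Removing one irreducible divisor from a smooth projective variety drops the rank of $N^{1}$ by at most one, and here it drops by exactly one, leaving $\dim_{\mathds{R}} N^{1}(\cc_{2}) = 1$.

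The main obstacle I anticipate is pinning down $N^{1}(\cc_{2})$ rigorously rather than heuristically: one must identify the complement $\hh_{2} \setminus \cc_{2}$ as a single irreducible divisor whose class generates a primitive sublattice, and verify the rank-$2$ statement for $\hh_{2}$ from the cited literature on the Hilbert scheme of twisted cubics. An alternative route that avoids this, and which I would present as the primary argument, is to run the fibration argument of the excerpt one more step in the other direction: the incidence variety $W_{1} \subseteq \cc_{1} \times \cc_{2}$ with $\cc_{1} = G(1,3)$ the Grassmannian of lines realizes $\cc_{2}$ as (an open subset of) a Grassmannian bundle $G(2, H^{0}(\mathscr{I}_{L}(2)))$ over $\cc_{1}$; since $H^{0}(\PP^{3}, \mathscr{I}_{L}(2))$ has dimension $3$ for a line $L$, the fiber is $G(2,3) \cong \PP^{2}$, a projective space, so $\dim_{\mathds{R}} N^{1}(W_{1}) = \dim_{\mathds{R}} N^{1}(\cc_{1}) + 1 = \dim_{\mathds{R}} N^{1}(G(1,3)) + 1 = 1 + 1 = 2$, and then the same identifications as in the excerpt (the fiber of $W_{1} \to \cc_{2}$ over a generic twisted cubic being $G(2, H^{0}(\mathscr{I}_{C}(2))) = G(2,3) \cong \PP^{2}$, again a projective space with trivial relevant $N^{1}$-contribution beyond rank one) give $\dim_{\mathds{R}} N^{1}(\cc_{2}) + 1 = \dim_{\mathds{R}} N^{1}(W_{1}) = 2$, hence $\dim_{\mathds{R}} N^{1}(\cc_{2}) = 1$. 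Combining with the lower bound and the descending chain (\ref{picnumb}) yields $\dim_{\mathds{R}} N^{1}(\cc_{r}) = 1$ for all $r$.
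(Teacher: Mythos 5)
Your overall strategy is the paper's: combine the chain of inequalities (\ref{picnumb}) with a lower bound and a Grassmannian base case. The paper does this more economically than you do: it runs (\ref{picnumb}) all the way down to $r=1$, where $\cc_{1}=\overline{\cc_{1}}=\mathds{G}(1,3)$ and $\dim_{\mathds{R}}N^{1}(\mathds{G}(1,3))=1$, and it gets the lower bound from the divisor $H$ of curves in $\cc_{r}$ meeting a fixed line. Your decision to stop the chain at $r=2$ and then treat $\cc_{2}$ by hand is what introduces the problems below; the cleanest repair is simply to observe that the argument establishing (\ref{picnumb}) applies verbatim for $r=1$ (twisted cubics are integral and lie on no plane), which is exactly the paper's proof.

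Concretely: (i) in your ``primary'' argument you assert $h^{0}(\PP^{3},\mathscr{I}_{L}(2))=3$ for a line $L$; the correct value is $10-3=7$, so the fiber of $\pi_{0}\colon W_{1}\to\cc_{1}$ is not $\PP^{2}$ but an open subset of $G(2,7)$ (pencils of quadrics through $L$ whose members share a plane through $L$ must be excluded). Because the fiber is only an open subset, the equality $\dim_{\mathds{R}}N^{1}(W_{1})=\dim_{\mathds{R}}N^{1}(\cc_{1})+1$ you claim is not available; as in the paper one passes to the full Grassmannian bundle $\widetilde{W}_{1}$ and obtains only $\dim_{\mathds{R}}N^{1}(W_{1})\leq\dim_{\mathds{R}}N^{1}(\cc_{1})+1$, which, combined with the genuine equality on the $\pi_{1}$ side (where the fiber is all of $G(2,H^{0}(\mathscr{I}_{C}(2)))$, $h^{0}=3$), still gives $\dim_{\mathds{R}}N^{1}(\cc_{2})\leq 1$ --- but then you have reproduced (\ref{picnumb}) at $r=1$, not a new argument. (ii) Your first route for the base case conflates $\mathrm{Pic}$ with $N^{1}$ of an open variety: removing an irreducible divisor from a smooth projective variety controls the drop of the Picard group, but numerical equivalence on the complement is tested against complete curves contained in it, so the rank of $N^{1}$ can drop by more than one (if the removed divisor is ample the complement is affine and $N^{1}=0$); and the claim that here it drops ``by exactly one'' is asserted, not proved. (iii) The lower bound via ``rational but not affine implies $N^{1}\neq 0$'' is not a valid principle ($\mathds{A}^{2}\setminus\{0\}$ is a counterexample), and restricting $\mathcal{O}(1)$ gives a nonzero numerical class only once one exhibits a complete curve in $\cc_{r}$ meeting it positively; the paper instead uses the divisor $H$ of curves through a fixed line (and, for $r=3$, later produces a complete curve $\delta$ with $\delta\cdot H>0$ via flexibility). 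So the proposal is repairable, but as written the base case and the lower bound both contain genuine gaps.
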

\begin{proof} For each $r$ we consider the divisor $H$ that parameterizes the curves in $\cc_{r}$ that intersect a fixed line, the numerical class of this divisor is not trivial, thus $0< dim_{\mathds{R}}N^{1}(\cc_{r})$. On the other hand, we know that $\cc_{1}=\overline{\cc_{1}}=\mathds{G}(1,3)$ and using the inequality (\ref{picnumb}) we have that for all $r$ 
\[0<dim_{\mathds{R}}N^{1}(\cc_{r})\leq dim_{\mathds{R}}N^{1}(\cc_{1})=dim_{\mathds{R}}N^{1}(\mathds{G}(1,\PPP))=1.\]
\end{proof}

In particular, we have that the spaces $\cc_{r}$ are not affine spaces. With the previous Proposition, we can prove the following Corollary.
\begin{cor} \label{T1} The dimension of the vector space $N^{1}(\mathscr{B})$ is $3$.
\end{cor}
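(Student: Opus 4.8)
The plan is to assemble the dimension count from the three inputs already established in this section. Recall that $\mathscr{B}$ is the normalization of $\overline{\aaa}\cup\overline{\ccc}\cup\cc_{3}$, so every Cartier divisor class on $\mathscr{B}$ is a combination of the classes coming from the open dense piece $\cc_{3}$ together with the classes of the two boundary divisors $\overline{\aaa}$ and $\overline{\ccc}$. Concretely, there is an exact (excision) sequence
\[
\mathds{Z}[\overline{\aaa}]\oplus\mathds{Z}[\overline{\ccc}]\longrightarrow N^{1}(\mathscr{B})\longrightarrow N^{1}(\cc_{3})\longrightarrow 0,
\]
obtained by restricting divisor classes to the open subset $\cc_{3}\subseteq\mathscr{B}$ (whose complement is supported on $\overline{\aaa}\cup\overline{\ccc}$), so $\dim N^{1}(\mathscr{B})\leq \dim N^{1}(\cc_{3})+2$. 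By Proposition \ref{proppic} we have $\dim_{\mathds{R}}N^{1}(\cc_{3})=1$, so $\dim N^{1}(\mathscr{B})\leq 3$.

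For the reverse inequality I would exhibit three linearly independent classes. Take $[\overline{\aaa}]$ and $[\overline{\ccc}]$, which are linearly independent in $N^{1}(\mathscr{B})$ by Theorem \ref{li}; and take the pullback to $\mathscr{B}$ of the ample generator $H$ of $N^{1}(\cc_{3})$ from Proposition \ref{proppic} (the divisor of curves meeting a fixed line), or equivalently any divisor class whose restriction to the open set $\cc_{3}$ is nonzero. Since $[\overline{\aaa}]$ and $[\overline{\ccc}]$ both restrict to $0$ on $\cc_{3}$ (their supports lie in the boundary), while $H$ restricts to a nonzero class on $\cc_{3}$, no nontrivial relation $a[\overline{\aaa}]+b[\overline{\ccc}]+c\,H=0$ can have $c\neq 0$; and if $c=0$ then $a=b=0$ by Theorem \ref{li}. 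Hence the three classes are linearly independent, giving $\dim N^{1}(\mathscr{B})\geq 3$, and combining with the upper bound we conclude $\dim N^{1}(\mathscr{B})=3$.

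The only delicate point is making the excision sequence rigorous: one must ensure that $N^{1}$ of a (normal, possibly singular) quasi-projective variety behaves well under removing a divisor, i.e.\ that classes on $\cc_{3}$ extend to $\mathscr{B}$ modulo the boundary components, and that numerical equivalence on $\mathscr{B}$ restricts compatibly to numerical equivalence on $\cc_{3}$. This is standard for the groups of Weil/Cartier divisors modulo algebraic (hence numerical) equivalence, using that $\overline{\aaa}$ and $\overline{\ccc}$ are the only codimension-one components of the boundary by Lemma \ref{union}; but since $\mathscr{B}$ is only the normalization of a union of components, I would phrase the argument purely in terms of the localization right-exact sequence for divisor class groups and then tensor with $\mathds{R}$, avoiding any appeal to smoothness. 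This bookkeeping — rather than any new geometry — is the main thing to get right, and it is exactly the place where the hypothesis that $\mathscr{B}$ is a \emph{normal} partial compactification is used.
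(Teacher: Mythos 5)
Your argument is correct in outline and agrees with the paper on the upper bound, but the lower bound is run differently. The paper's proof also starts from the (asserted) generation of $N^{1}(\mathscr{B})$ by $[\overline{\aaa}]$, $[\overline{\ccc}]$ and generators of $N^{1}(\cc_{3})$ — which is exactly your excision claim, and in both cases the nonformal point is the same: the localization sequence is exact for classes modulo \emph{linear} equivalence, and passing to numerical equivalence (exactness in the middle of your sequence) is not purely formal, so your flagged ``bookkeeping'' is genuinely the same gap the paper leaves open rather than an extra rigor you have supplied. Where you diverge is the independence of the three classes: you restrict a putative relation $a[\overline{\aaa}]+b[\overline{\ccc}]+c[H]=0$ to the open locus $\cc_{3}$, kill $c$ using the nontriviality of $[H]$ in $N^{1}(\cc_{3})$ from Proposition \ref{proppic}, and then kill $a,b$ by Theorem \ref{li}. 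The paper instead works dually in $N_{1}(\mathscr{B})$: from Theorem \ref{li} it takes classes $\alpha,\beta$ separating $[\overline{\aaa}]$ and $[\overline{\ccc}]$, and then \emph{constructs} a complete curve $\delta\subset\cc_{3}$ using Perrin's flexibility result (curves of $\cc_{3}$ through $11$ general points plus a point moving along a general line), so that $\delta$ avoids $\overline{\aaa}\cup\overline{\ccc}$ and meets $H$ positively; the resulting intersection table gives independence. Your route is shorter and cleaner linear algebra, but it inherits Proposition \ref{proppic}'s unproved assertion that $[H]$ is numerically nontrivial on $\cc_{3}$ and requires the restriction map on numerical classes to be well defined; the paper's route is more self-contained precisely here, since the curve class $\gamma=[\delta]$ is the explicit witness both for $[H]\neq 0$ and for the independence, and the concrete pairings against $\alpha,\beta,\gamma$ are the kind of data that also feeds the extremality discussion around Theorem \ref{li}. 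So: same skeleton, different and essentially equivalent mechanism for the lower bound, with the paper trading your appeal to Proposition \ref{proppic} for an explicit test curve.
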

\begin{proof}
By Proposition \ref{proppic} and Lemma \ref{union} we have that $dim_{\mathds{R}}N^{1}(\mathscr{B})\leq 3$. Therefore, it is enough to find three linearly independent divisors in $N^{1}(\mathscr{B})$.
Consider $H$ the locus of all curves in $\mathscr{B}$ that intersect a fixed line. This is a divisor of $\mathscr{B}$; we denote its class in $N^{1}(\mathscr{B})$ by $[H]$. Let $[\overline{\ccc}]$ and $[\overline{\aaa}]$ be the classes of the divisors $\overline{\ccc}$ and $\overline{\aaa}$ in $N^{1}(\mathscr{B})$, respectively. We aim to show that $[H], [\overline{\ccc}]$ and $[\overline{\aaa}]$ are linearly independent.
\item Since $[\overline{\ccc}]$ and $[\overline{\aaa}]$ are linearly independent in $N^{1}(\mathscr{B})$ by Theorem \ref{li}, thus  there exist two classes $\alpha$ and $\beta$ in $N_{1}(\mathscr{B})$ such that $[\overline{\aaa}] \cdot \alpha \not=0$, $[\overline{\ccc}]\cdot \alpha=0$, $[\overline{\aaa}]\cdot \beta=0$ and $[\overline{\ccc}]\cdot \beta \not= 0$.
\item On the other hand, we know that the curves in $\cc_{3}$ are flexible curves, i.e., for every twelve general points in $\PP^{3}$ there exists a curve in $\cc_{3}$ passing through them \cite{perr}*{Prop. 5.6, Cor. 5.7, Prop. 5.11.bis}. We use this property to construct a curve in $\cc_{3}$.  Let $Z$ be a set of $11$ points in general position on $\PP^{3}$ and a general line $L$ such that $Z\cap L=\emptyset$. For every $t\in{L}$ we consider a curve $\delta_{t}$ on  $\cc_{3}$ that contains the set of points $Z\cup \{t\}$. This construction gives us a curve $\delta$ in $\cc_{3}$ which does not intersect $\overline{\aaa}\cup \overline{\ccc}$. And, the intersection of $\delta$ with $[H]$ is positive. Let $\gamma$ be the class of $\delta$ in the space of classes of curves $N_{1}(\mathscr{B})$.
\item The following table summarizes the intersection numbers of curves and divisors we have discussed:
\begin{center}
\begin{tabular}{ c| c c c}
$\cdot$ &  $[\overline{\aaa}]$ & $[\overline{\ccc}]$ & $[H]$ \\ \hline 
$\alpha$ &$\not= 0$ &0 &?\\
$\beta$ & 0 & $\not= 0$& ?  \\
$\gamma$& 0& 0& $>0$ \\
\end{tabular}
\end{center} 
Therefore, the classes $[\overline{\aaa}]$, $[\overline{\ccc}]$ and $[H]$ are linearly independents in $N^{1}(\mathscr{B})$ and hence form a base of this space.\\
\end{proof}

Initially, we aim to compute the dimension of $N^{1}(\oc)$ but we do not know how big $\oc - \mathscr{B}$ is or even if $\oc$ is normal. We know that there exists a component G of the Hilbert scheme $Hilb_{p(t),3}$ of dimension 27 that intersects $\oc$, but we can not prove that this is the only component that intersects it. In particular, we do not know how singular $\oc$ is outside of $\mathscr{B}$.

\section{Linked families of codimension one}
In this section, we define families linked to the families $\ccc$ and $\aaa$ and compute their dimensions. Moreover, we verify that the Hilbert scheme $\hh_{4}$ has at least three components. 

As before we would like to study the divisors on the ACM component $\overline{\cc_{r}}$. In the previous section, we studied a divisor in $\oc$ of curves in the linkage class of the disjoint union of two lines, whose Rao module has a length one. We expect that there exists a reducible divisor on $\overline{\cc_{r}}$ that parameterizes curves with Rao module of length one. Since the families $\ccc$ and $\aaa$ are in the linkage class of the disjoint union of two lines, we consider the corresponding linked families in order to exhibit a divisor in $\overline{\cc_{r}}$ whose elements are in the same linkage class. The obstacle for proving that such linked families are divisors in $\overline{\cc_{r}}$ is verifying that these families are contained in the component $\overline{\cc_{r}}$. Let us fix the notation for these linked families in order to investigate some of their properties.

\begin{defi} \label{deffam} Let  $ \mathscr{L}\ccc$ (resp., $ \mathscr{L}\aaa$) be the family of curves in $\hh_{4}$ that are linked to the elements of $\ccc$ (resp., $\aaa$) by the complete intersection of two surfaces of degree $4$. We define recursively for all $r\geq 4$, the family $\mathscr{L}^{r-3}\ccc$ (resp., $ \mathscr{L}^{r-3}\aaa$) in $\hh_{r}$ as the curves linked to curves in the family $\mathscr{L}^{r-4}\ccc$ (resp., $ \mathscr{L}^{r-4}\aaa$) by the complete intersection of two surfaces of degree $r$.
 \end{defi}
 \medskip 
 
 \medskip\noindent
Now we can obtain information of the linked families $\mathscr{L}^{r-3}\ccc$ and $\mathscr{L}^{r-3}\aaa$ from the families $\ccc$ and $\aaa$.
\begin{lem} \label{lemacoh} Let $C$ be a curve  in $\mathscr{L}^{r-3}\ccc$  then:
\begin{itemize}
     \item If $r$ is odd, we have that $H^{1}( \mathscr{I}_{C}(r-1))\cong k$ and $H^{1}( \mathscr{I}_{C}(n))=0 \quad \text{for all } n\not= r-1$.
      \item If $r$ is even, we have that $H^{1}( \mathscr{I}_{C}(r-2))\cong k$ and $H^{1}( \mathscr{I}_{C}(n))=0 \quad \text{for all } n\not= r-2$.
 \end{itemize}
\end{lem}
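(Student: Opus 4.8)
The plan is to track the Rao module (equivalently, the first cohomology of the ideal sheaf) through the chain of links that defines $\mathscr{L}^{r-3}\ccc$, using the basic duality for linked curves recalled in the preliminaries. Recall that if $C'$ is linked to $C$ by a complete intersection of two surfaces of degree $s$, then $H^1(\PPP,\mathscr{I}_{C'}(n))$ is isomorphic to the dual of $H^1(\PPP,\mathscr{I}_{C}(2s-4-n))$ for every $n$. The base case $r=3$ is already recorded in Section \ref{sect6,3}: for $C\in\ccc$ the Rao module is $k(-2)$, i.e. $H^1(\mathscr{I}_C(n))=k$ for $n=2$ and $0$ otherwise. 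Since $2$ equals $r-1$ when $r=3$, the ``$r$ odd'' case of the statement holds at the starting index, and the induction will alternate parity as $r$ increases by $1$ at each link.

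\textbf{The inductive step.} Suppose $C\in\mathscr{L}^{r-4}\ccc\subseteq\hh_{r-1}$ has $H^1(\mathscr{I}_C(n))=k$ concentrated in a single degree $n_0$ and zero elsewhere. Any $C'\in\mathscr{L}^{r-3}\ccc$ linked to such a $C$ arises from a complete intersection of two surfaces of degree $s=r$, so by the duality above $H^1(\mathscr{I}_{C'}(n))$ is the dual of $H^1(\mathscr{I}_C(2r-4-n))$; this is $k$ precisely when $2r-4-n=n_0$, i.e. when $n=2r-4-n_0$, and $0$ for all other $n$. It remains to check that the single nonzero degree lands where the statement claims. By the inductive hypothesis, if $r-1$ is odd (so $r$ is even) then $n_0=(r-1)-1=r-2$, giving new degree $2r-4-(r-2)=r-2$; wait — one must be careful here, so let me organize the two parities explicitly. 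Writing $e(r)$ for the nonzero degree at level $r$, the claim is $e(r)=r-1$ for $r$ odd and $e(r)=r-2$ for $r$ even; the recursion $e(r)=2r-4-e(r-1)$ gives, from $e(r-1)=r-3$ ($r-1$ even, i.e. $r$ odd), the value $e(r)=2r-4-(r-3)=r-1$, and from $e(r-1)=r-2$ ($r-1$ odd, i.e. $r$ even) the value $e(r)=2r-4-(r-2)=r-2$, matching the claim in both parities. Finally one invokes the preliminaries' observation that $M(C')$ depends only on the link (the scheme of flags is unchanged) so the Rao module is indeed constant in the family $\mathscr{L}^{r-3}\ccc$, and in particular has $k$-dimensional pieces exactly as computed.

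\textbf{The main obstacle} is the bookkeeping of parity and indices, together with making sure that at each stage the curves $\mathscr{L}^{k}\ccc$ actually lie in $\hh_{k+3}$ so that the degree-$s$ used in the link is the correct one ($s=r$ at the step producing level $r$) — this is built into Definition \ref{deffam} but should be stated explicitly so the shift $2s-4=2r-4$ in the duality is justified. No genuinely hard geometry enters: the whole argument is the iterated application of Rao-module duality for liaison plus an induction on parity, seeded by the degree-$6$, genus-$3$ computation already in Section \ref{sect6,3}.

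\begin{proof}
We argue by induction on $r\geq 3$, tracking the graded module $M(C)=\bigoplus_n H^1(\PPP,\mathscr{I}_C(n))$. Recall from the preliminaries that if $C'$ is directly linked to $C$ by a complete intersection of two surfaces of degree $s$, then $H^1(\PPP,\mathscr{I}_{C'}(n))$ is the $k$-dual of $H^1(\PPP,\mathscr{I}_{C}(2s-4-n))$ for every $n$, and that the Rao module is constant along such a family (the associated schemes of flags are isomorphic).

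\emph{Base case $r=3$.} For $C\in\ccc$ the table in Section \ref{sect6,3} gives $M(C)\cong k(-2)$, that is $H^1(\mathscr{I}_C(n))\cong k$ for $n=2$ and $0$ otherwise. Since $r=3$ is odd and $2=r-1$, the statement holds.

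\emph{Inductive step.} Let $r\geq 4$ and let $C\in\mathscr{L}^{r-3}\ccc\subseteq\hh_r$. By Definition \ref{deffam}, $C$ is directly linked by a complete intersection of two surfaces of degree $r$ to some curve $C_0\in\mathscr{L}^{r-4}\ccc\subseteq\hh_{r-1}$, to which the inductive hypothesis applies. Write $e(r-1)$ for the unique degree in which $H^1(\mathscr{I}_{C_0}(\cdot))$ is nonzero; by hypothesis $e(r-1)=(r-1)-1=r-2$ if $r-1$ is odd (i.e. $r$ even) and $e(r-1)=(r-1)-2=r-3$ if $r-1$ is even (i.e. $r$ odd). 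By the duality with $s=r$, $H^1(\mathscr{I}_{C}(n))$ is nonzero, and then $\cong k$, precisely when $2r-4-n=e(r-1)$, i.e. when $n=2r-4-e(r-1)$, and is $0$ for all other $n$. If $r$ is odd this single degree is $2r-4-(r-3)=r-1$; if $r$ is even it is $2r-4-(r-2)=r-2$. Since the Rao module is constant in the family $\mathscr{L}^{r-3}\ccc$, this proves the claim for $r$, completing the induction.
\end{proof}
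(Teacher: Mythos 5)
Your proof is correct and follows essentially the same route as the paper: the paper also proves the lemma by iterating the liaison duality $H^{1}(\mathscr{I}_{C'}(n))\cong H^{1}(\mathscr{I}_{C}(2s-4-n))^{\vee}$ inductively, seeded by the Rao module $k(-2)$ of curves in $\ccc$ computed in Section \ref{sect6,3}. Your version merely spells out the parity bookkeeping ($e(r)=2r-4-e(r-1)$) that the paper leaves implicit.
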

\begin{proof}  By definition, the curve $C$ is linked to a curve  $C^{\prime}\in{\mathscr{L}^{(r+1)-3}\ccc}$ by the complete intersection of two surfaces of degree $r+1$. By \cite{mdp2}*{III,Prop. 1.2} we have that:
 \begin{equation} \label{eqlinkcurv}
 H^{1}( \mathscr{I}_{C^{\prime}}(n))\cong H^{1}( \mathscr{I}_{C}(2r-2-n))^{\vee}.
 \end{equation}  
We know the Rao module of an element in $\mathscr{C}^{h}$. Thus  inductively with Equation  \ref{eqlinkcurv} we have the result.\\
 \end{proof}

Similar arguments to those of the proof of Lemma \ref{lemacoh} can be used for the families $\mathscr{L}^{r-3}\aaa$.

\begin{lem} \label{lem:cohant} Let $C$ be a curve  in $\mathscr{L}^{r-3}\aaa$ then:
\begin{itemize}
\item If $r$ is odd,  $H^{1}( \mathscr{I}_{C}(r-2))\cong k$ and $H^{1}( \mathscr{I}_{C}(n))=0 \quad \text{for all } n\not= r-2$.
     \item If $r$ is even,  $H^{1}( \mathscr{I}_{C}(r-1))\cong k$ and $H^{1}( \mathscr{I}_{C}(n))=0 \quad \text{for all } n\not= r-1$.
      
 \end{itemize}
\end{lem}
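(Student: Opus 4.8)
The plan is to mimic the proof of Lemma \ref{lemacoh} verbatim, the only difference being the starting Rao module. By definition, a curve $C \in \mathscr{L}^{r-3}\aaa$ is linked to a curve $C' \in \mathscr{L}^{(r+1)-3}\aaa$ by the complete intersection of two surfaces of degree $r+1$, so \cite{mdp2}*{III, Prop. 1.2} gives the duality
\[
H^{1}(\mathscr{I}_{C'}(n)) \cong H^{1}(\mathscr{I}_{C}(2r-2-n))^{\vee}
\]
for every $n$, exactly as in Equation \ref{eqlinkcurv}. The base case is $r=3$: by the description of the component $\oc$ in Section \ref{sect6,3}, a generic element of $\aaa$ has Rao module isomorphic to $k(-1)$, i.e. $H^{1}(\mathscr{I}_{C}(1)) \cong k$ and $H^{1}(\mathscr{I}_{C}(n)) = 0$ for $n \neq 1$. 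This is the $r$ odd case with $r-2 = 1$, so the statement holds for $r=3$.

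Now I would run the induction. Suppose the claim holds for $r$; I want it for $r+1$. A curve $C' \in \mathscr{L}^{(r+1)-3}\aaa$ is linked to some $C \in \mathscr{L}^{r-3}\aaa$ by two surfaces of degree $r+1$, so $H^{1}(\mathscr{I}_{C'}(n)) \cong H^{1}(\mathscr{I}_{C}(2r-2-n))^{\vee}$. If $r$ is odd, the inductive hypothesis says $H^{1}(\mathscr{I}_{C}(m)) \neq 0$ exactly at $m = r-2$; this corresponds to $2r-2-n = r-2$, i.e. $n = r$. Since $r+1$ is even and $(r+1)-1 = r$, this is precisely the even-case conclusion for $r+1$. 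Symmetrically, if $r$ is even, $H^{1}(\mathscr{I}_{C}(m)) \neq 0$ exactly at $m = r-1$, which gives $n = r-1 = (r+1)-2$, the odd-case conclusion for $r+1$. In both cases the module is one-dimensional because $H^1(\mathscr{I}_C(r-2))\cong k$ (resp. $H^1(\mathscr{I}_C(r-1))\cong k$) and dualizing a $k$-vector space of dimension one preserves the dimension. This closes the induction.

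There is no real obstacle here: the statement is a bookkeeping consequence of the linkage duality together with the known Rao module of $\aaa$, and the only thing to be careful about is tracking the parity shift correctly (each link flips odd $\leftrightarrow$ even and shifts the position of the nonzero graded piece by one, from $r-2$ to $(r+1)-1 = r$ and back). As the excerpt already notes, the argument is "similar to those of the proof of Lemma \ref{lemacoh}," so I would simply remark that one substitutes the Rao module $k(-1)$ of a generic element of $\aaa$ in place of the module $k(-2)$ of a generic element of $\ccc$ and repeats the induction above.

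\begin{proof}
By definition, the curve $C$ is linked to a curve $C^{\prime}\in{\mathscr{L}^{(r+1)-3}\aaa}$ by the complete intersection of two surfaces of degree $r+1$. By \cite{mdp2}*{III,Prop. 1.2} we have that
\begin{equation} \label{eqlinkcurvant}
H^{1}( \mathscr{I}_{C^{\prime}}(n))\cong H^{1}( \mathscr{I}_{C}(2r-2-n))^{\vee}.
\end{equation}
We know the Rao module of a generic element in $\mathscr{A}$, namely $k(-1)$, so that $H^{1}(\mathscr{I}_{C}(1))\cong k$ and $H^{1}(\mathscr{I}_{C}(n))=0$ for $n\neq 1$; this is the case $r=3$ of the statement. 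Arguing inductively with Equation \ref{eqlinkcurvant}, and noting that each link interchanges the parity of $r$ and shifts the unique graded piece on which $H^{1}$ is nonzero, we obtain the result.\\
\end{proof}
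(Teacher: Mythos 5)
Your proposal is correct and follows essentially the same route as the paper: the paper's proof of Lemma \ref{lem:cohant} simply refers back to the argument of Lemma \ref{lemacoh}, i.e.\ induction on $r$ via the linkage duality $H^{1}(\mathscr{I}_{C^{\prime}}(n))\cong H^{1}(\mathscr{I}_{C}(2r-2-n))^{\vee}$ of \cite{mdp2}*{III, Prop.\ 1.2}, with the base case now given by the Rao module $k(-1)$ of the elements of $\aaa$ instead of $k(-2)$ for $\ccc$. Your explicit parity bookkeeping (odd $r$ at twist $r-2$, even $r$ at twist $r-1$) is exactly the intended content, so there is nothing to add.
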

\begin{proof}
The proof is similar to  Lemma \ref{lemacoh}.\\
\end{proof}

Using Proposition \ref{star} we are able to compute the global sections of the sheaf $\mathscr{I}_{C}(m)$ for all positive $m$ and all $r\geq 3$ for a generic element $C$ of $\mathscr{L}^{r-3}\ccc$. This helps us to compute the dimension of the linked families.

\begin{cor} \label{sg} Let $C$ be a generic curve in $\mathscr{L}^{r-3}\ccc$, then we have that:
\begin{itemize}
\item[a)] $ h^{0}(\PP^{3}, \mathscr{I}_{C}(r-3))=0$
\item[b)] $
    h^{0}(\PP^{3},\mathscr{I}_{C}(r-1)) =\left\{
	       \begin{array}{ll}
		 \text{0}     & \mathrm{if} \; r \; \mathrm{is \; even } \\
		\text{1}     & \mathrm{if} \; r \; \mathrm{is \; odd }  
		 \end{array}
	     \right.
   $
\item[c)] $ h^{0}(\PP^{3}, \mathscr{I}_{C}(r))=r+1$
\item[d)]$ h^{0}(\PP^{3}, \mathscr{I}_{C}(r+1))=3r+4$; in general we have:

\[ h^{0}(\PP^{3}, \mathscr{I}_{C}(r+a))=\frac{(a+1)(a+2)}{2}r + \frac{(a+1)(a+2)(a+3)}{6} \quad \text{for all }a\geqslant 1. \]
\end{itemize}

\end{cor}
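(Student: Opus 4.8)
\textbf{Proof plan for Corollary \ref{sg}.}

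The strategy is to apply Proposition \ref{star} iteratively, climbing the linkage ladder $\ccc = \mathscr{L}^{0}\ccc \rightsquigarrow \mathscr{L}^{1}\ccc \rightsquigarrow \cdots \rightsquigarrow \mathscr{L}^{r-3}\ccc$, using at each rung the knowledge of $h^{1}$ (hence the Rao module) supplied by Lemma \ref{lemacoh} together with the values of $h^{0}$ already computed for the previous rung. Concretely, if $C \in \mathscr{L}^{r-3}\ccc \subseteq \hh_{r}$ is linked to $C' \in \mathscr{L}^{r-4}\ccc \subseteq \hh_{r-1}$ by a complete intersection of two surfaces of degree $r$ (with one of them smooth, which holds for a generic member), then \eqref{eqstar} with $m \geq r-3$ reads
\[
h^{0}(\PP^{3},\mathscr{I}_{C}(m)) = h^{1}(\PP^{3},\mathscr{I}_{C}(m)) - h^{0}(\PP^{3},\mathscr{I}_{C'}(2r-m-4)) + \frac{r(r-m-2)(r-m-1)}{2} + \binom{m-r+3}{3}.
\]
So the whole computation is bookkeeping: for the target value $m = r+a$ one needs $h^{0}(\mathscr{I}_{C'}(r-a-4))$, i.e. the $h^{0}$ of the curve one step down evaluated at degree $(r-1)-(a+1)$, which the inductive hypothesis controls once $a$ is not too large relative to $r$; for small arguments one falls back on Lemma \ref{lemacoh} (for the $h^{1}$ term) and on the vanishing statements that are themselves proved by the same recursion. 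The base case $r = 3$ is exactly the postulation/Rao data for $\ccc$ tabulated in Section \ref{sect6,3} (e.g. $h^{0}(\mathscr{I}^{h}_{3}(3)) = 4$, $h^{0}(\mathscr{I}^{h}_{3}(2)) = 1$, $h^{0}(\mathscr{I}^{h}_{3}(0))=0$), which seeds the induction.

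I would organize the argument as follows. First establish (a): $h^{0}(\mathscr{I}_{C}(r-3)) = 0$. Apply \eqref{eqstar} with $m = r-3$; the binomial term $\binom{0}{3}$ vanishes, the cubic term is $\frac{r \cdot 1 \cdot 2}{2} = r$, the $h^{1}$ term is $0$ by Lemma \ref{lemacoh} (since $r-3 \neq r-1$ and $r-3 \neq r-2$), so $h^{0}(\mathscr{I}_{C}(r-3)) = -\,h^{0}(\mathscr{I}_{C'}(r-1)) + r$; then feed in $h^{0}(\mathscr{I}_{C'}(r-1)) = h^{0}(\mathscr{I}_{C'}((r-1)-0))$, which is the degree-$s_{C'}$ value, equal to $(r-1)+1 = r$ for a curve whose minimal surface has degree $r-1$ — this minimality being part of what the induction carries. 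Next (c): apply \eqref{eqstar} with $m = r$; then $\binom{3}{3} = 1$, the cubic term is $\frac{r(-2)(-1)}{2} = r$, the $h^{1}$ term vanishes (degree $r$ is not the Rao degree $r-1$ or $r-2$), and the remaining term is $-\,h^{0}(\mathscr{I}_{C'}(r-4))$, which is $h^{0}$ of $C'$ at degree $(r-1)-3 = s_{C'}-2$, hence $0$; so $h^{0}(\mathscr{I}_{C}(r)) = 0 - 0 + r + 1 = r+1$. Then (b): apply \eqref{eqstar} with $m = r-1$; here the cubic term vanishes, $\binom{2}{3} = 0$, the $h^{1}$ term equals $h^{1}(\mathscr{I}_{C}(r-1))$, which by Lemma \ref{lemacoh} is $1$ when $r$ is odd and $0$ when $r$ is even, and the remaining term $-\,h^{0}(\mathscr{I}_{C'}(r-3))$ — i.e. $-\,h^{0}$ of $C'$ at its minimal surface degree minus $0$... here one must be careful: $(r-1)-(r-3) = 2$, so this is $h^{0}(\mathscr{I}_{C'}((r-1)-\,?)$; tracking the shift shows it equals $h^{0}(\mathscr{I}_{C'}(r-3))$ which by part (a) applied at level $r-1$ is $0$ — giving $h^{0}(\mathscr{I}_{C}(r-1)) = h^{1}$-term $= \chi_{r \text{ odd}}$, as claimed. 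Finally (d): induct on $a \geq 1$, applying \eqref{eqstar} with $m = r+a$; for these degrees $h^{1}(\mathscr{I}_{C}(r+a)) = 0$, the term $-\,h^{0}(\mathscr{I}_{C'}(r-a-4))$ vanishes for $a \geq 0$ (the argument $r-a-4 < r-1 - 2 = s_{C'}-1$ for $a \geq -1$, so it is below the minimal surface degree when $a \geq 1$, hence zero), leaving $h^{0}(\mathscr{I}_{C}(r+a)) = \frac{r(-a-2)(-a-1)}{2} + \binom{a+3}{3} = \frac{(a+1)(a+2)}{2}r + \frac{(a+1)(a+2)(a+3)}{6}$, which is the stated closed form; setting $a = 1$ recovers $3r+4$.

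The routine part is the arithmetic identity $\frac{r(r-m-2)(r-m-1)}{2} + \binom{m-r+3}{3}$ collapsing to the advertised formula after substituting $m = r+a$ — I would just record the substitution and not belabor it. The genuine content, and the step I expect to require the most care, is the hypothesis that Proposition \ref{star} is applicable at every rung, namely that a generic member of $\mathscr{L}^{n}\ccc$ is directly linked, via a complete intersection containing at least one \emph{smooth} surface, to a generic member of $\mathscr{L}^{n-1}\ccc$, and — intertwined with this — that at each level the minimal degree of a surface containing the curve behaves as the recursion demands (so that the $h^{0}$-values of $C'$ I plug in are the ones the induction has actually established). For the smoothness of the linking surface one invokes Bertini: the generic surface of degree $r$ through a generic curve in $\mathscr{L}^{n-1}\ccc$ is smooth provided the base locus imposes no worse than nodal behavior, which follows once the curve is (generically) a smooth or mild curve and $r \geq 3$; since the induction starts from $\ccc$, whose general member is a smooth curve on a smooth quadric, this propagates. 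The cleanest way to present it is to state a small auxiliary claim "the generic member of $\mathscr{L}^{n}\ccc$ lies on a smooth surface of degree $n+3$ and its minimal surface degree is $n+3$ for $n$ even, $n+2$ for $n$ odd," prove it by the same induction simultaneously with (a)–(d), and then harvest all of Corollary \ref{sg} as the bundled output.
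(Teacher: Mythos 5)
Your proposal takes essentially the same route as the paper's proof: climb the linkage ladder applying Proposition \ref{star} at each rung with the $h^{1}$-input from Lemma \ref{lemacoh}, seed the induction with the postulation data of $\ccc$ from Section \ref{sect6,3}, and obtain (a) by what amounts to the paper's double application of $(\star)$, with (b), (c), (d) following by the same substitutions (the paper even makes the same small shortcut you flag in (b), attributing $h^{0}(\mathscr{I}_{C'}(r-3))=0$ to part (a) one level down). The only slip is in your closing auxiliary claim, where the parities are swapped — by (b) and (c) the minimal surface degree of a curve in $\mathscr{L}^{n}\ccc\subseteq\hh_{n+3}$ is $n+2$ when $n$ is even (i.e. $r=n+3$ odd) and $n+3$ when $n$ is odd — but this is cosmetic and does not affect your main computations, which carry the correct parity.
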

\begin{proof} 
\begin{itemize}
\item[$a)$]We use induction over $r$: for the case $r=3$ we know that $h^{0}(\PP^{3}, \mathscr{I}_{C_{3}}(0))=0$. Suppose that it is true for all  $s \leqslant r-1$. Let $C_{r}\in{\mathscr{L}^{r-3}\ccc}$. By definition there exists a curve $C_{r-1}\in{\mathscr{L}^{(r-1)-3}\ccc}$ linked to $C_{r}$ by the complete intersection of two surfaces of degree $r$. Applying the formula (\ref{eqstar}) we have that:
\[ h^{0}(\mathds{P}^{3}, \mathscr{I}_{C_{r}}(r-3))= h^{1}(\mathds{P}^{3}, \mathscr{I}_{C_{r}}(r-3))- h^{0}(\mathds{P}^{3}, \mathscr{I}_{C_{r-1}}(r-1))+r.\]
Now, we know that $h^{1}(\mathds{P}^{3}, \mathscr{I}_{C_{r}}(r-3))=h^{1}(\mathds{P}^{3}, \mathscr{I}_{C_{r-1}}(r-1))=0$ and we can use the Proposition \ref{star} with $C_{r-1}$ and other curve $C_{r-2}\in{\mathscr{L}^{(r-2)-3}\ccc}$ linked to $C_{r-1}$  to obtain:
{\small \[ h^{0}(\mathds{P}^{3}, \mathscr{I}_{C_{r}}(r-3))= -(h^{1}(\mathds{P}^{3}, \mathscr{I}_{C_{r-1}}(r-1))- h^{0}(\mathds{P}^{3}, \mathscr{I}_{C_{r-2}}(r-1))+r)+r=h^{0}(\mathds{P}^{3}, \mathscr{I}_{C_{r-2}}(r-1)).\]} The last term is zero by induction.

\item[$c),d)$]  Follows from $a)$ and  Proposition \ref{star}.

\item[$b)$] Let $C_{r}\in{\mathscr{L}^{r-3}\ccc}$. We consider a curve $C_{r-1}\in{\mathscr{L}^{(r-1)-3}\ccc}$ linked to $C_{r}$ as before. From Proposition \ref{star} we have that:
\[ h^{0}(\mathds{P}^{3}, \mathscr{I}_{C_{r}}(r-1))= h^{1}(\mathds{P}^{3}, \mathscr{I}_{C_{r}}(r-1))- h^{0}(\mathds{P}^{3}, \mathscr{I}_{C_{r-1}}(r-3))+0 .\]
  From $a)$, we have that $h^{0}(\mathds{P}^{3}, \mathscr{I}_{C_{r-1}}(r-3))=0$ for all $r$ and  $h^{1}(\mathds{P}^{3}, \mathscr{I}_{C_{r}}(r-1))$ is one if $r$ is odd and zero if $r$ is even by Lemma \ref{lemacoh}.

\end{itemize}
\end{proof}

Now we can prove that the families $ \mathscr{L}^{r-3}\ccc$ are irreducible and compute their dimension.
\begin{prop} \label{dimch}
Let $r\geq 3$, then the family $\mathscr{L}^{r-3}\ccc$ is irreducible of dimension  $2r(r+1)-1$.
\end{prop}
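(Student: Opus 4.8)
The strategy is to leverage liaison theory: the family $\mathscr{L}^{r-3}\ccc$ is, by construction, the image of an incidence correspondence over $\mathscr{L}^{r-4}\ccc$ (or over $\ccc$ itself when $r=4$), where the fiber records the complete intersection of degree-$r$ surfaces realizing the link. Since liaison preserves irreducibility of such correspondences — the schemes of flags $\mathcal{D}_{\gamma,M,r,r}$ for linked families are isomorphic, as recorded in the Remark after Definition of linked families, and $\ccc$ is irreducible — the first task is to show $\mathscr{L}^{r-3}\ccc$ is irreducible by induction on $r$. The base case $r=3$ is the irreducibility of $\ccc$ (dimension $23$, established in Section~\ref{sect6,3}). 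For the inductive step, one exhibits $\mathscr{L}^{r-3}\ccc$ as dominated by an irreducible incidence variety: pairs $(C_{r-1}, C_r)$ with $C_{r-1}\in\mathscr{L}^{r-4}\ccc$ and $C_r$ linked to it by a complete intersection of two degree-$r$ surfaces containing $C_{r-1}$. This incidence variety fibers over the irreducible $\mathscr{L}^{r-4}\ccc$ with fiber an open subset of $G(2, H^0(\PP^3,\mathscr{I}_{C_{r-1}}(r)))$, hence is irreducible, and $\mathscr{L}^{r-3}\ccc$ is its image under the second projection, so it is irreducible too. One should check the fiber dimension is constant on a dense open (generic behavior of the linking surfaces), which is where Corollary~\ref{sg} enters.

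\textbf{Dimension count.} With irreducibility in hand, I would compute $\dim \mathscr{L}^{r-3}\ccc$ by the same incidence argument, tracking dimensions through the correspondence. The key input is Corollary~\ref{sg}(c): for a generic $C_{r-1}\in\mathscr{L}^{(r-1)-3}\ccc$ one has $h^0(\PP^3,\mathscr{I}_{C_{r-1}}(r)) = (r-1)+1 = r$ — wait, more precisely Corollary~\ref{sg}(c) applied with parameter shifted gives $h^0(\mathscr{I}_{C_{r-1}}(r-1)) = r$ and Corollary~\ref{sg}(d) gives $h^0(\mathscr{I}_{C_{r-1}}(r)) = 3(r-1)+4 = 3r+1$. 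The fiber of the incidence variety over $C_{r-1}$ is (an open subset of) $G(2, h^0(\mathscr{I}_{C_{r-1}}(r)))$, which has dimension $2(h^0(\mathscr{I}_{C_{r-1}}(r)) - 2) = 2(3r+1-2) = 2(3r-1)$. The fiber of the \emph{other} projection, over a generic $C_r\in\mathscr{L}^{r-3}\ccc$, is $G(2, h^0(\mathscr{I}_{C_r}(r))) = G(2, r+1)$ by Corollary~\ref{sg}(c), of dimension $2(r-1)$. Combining,
\[
\dim \mathscr{L}^{r-3}\ccc = \dim \mathscr{L}^{(r-1)-3}\ccc + 2(3r-1) - 2(r-1) = \dim \mathscr{L}^{(r-1)-3}\ccc + 4r.
\]
With base value $\dim\ccc = 23 = 2\cdot3\cdot4 - 1$ at $r=3$, induction gives $\dim\mathscr{L}^{r-3}\ccc = 23 + \sum_{k=4}^{r} 4k = 23 + 2(r(r+1) - 12) = 2r(r+1) - 1$, as claimed. (I would double-check the telescoping sum and the off-by-one in the cohomology indices against Corollary~\ref{sg} when writing the details.)

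\textbf{Main obstacle.} The delicate point is not the bookkeeping but justifying that the generic fiber of each projection of the incidence correspondence is exactly an \emph{open dense} subset of the relevant Grassmannian — that is, that for a generic curve in the family, a generic $2$-plane of degree-$r$ surfaces through it links it to a curve that again lies in $\mathscr{L}^{r-3}\ccc$ and is \emph{generic} there, with the expected cohomology. This requires knowing that the linking surfaces have no common component (so the complete intersection is a genuine curve of the right degree and genus) and that the cohomological invariants computed in Corollary~\ref{sg} and Lemma~\ref{lemacoh} indeed hold on a dense open — which they do, since those results are stated for generic elements. A clean way to package this is to invoke \cite{mdp2}*{VII}: the linked family $\mathscr{L}^{r-3}\ccc$ corresponds to a stratum $H_{\gamma',M'}(Y)$ whose flag scheme is isomorphic to that of $\mathscr{L}^{(r-1)-3}\ccc$, so irreducibility and the dimension formula transfer formally through the isomorphism of flag schemes, with the numerical shift dictated precisely by the change in $h^0$ recorded above. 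I would present the argument in the incidence-variety language for concreteness, citing \cite{mdp2} for the structural facts that make the fibers behave as expected.
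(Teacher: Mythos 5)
Your proposal is correct and takes essentially the same route as the paper: induction on $r$ via the incidence variety of directly linked pairs, with the two projections having fibers that are Grassmannians of pencils of surfaces whose dimensions come from Corollary \ref{sg}(c),(d), yielding the recursion $\dim \mathscr{L}^{r-3}\ccc=\dim \mathscr{L}^{(r-1)-3}\ccc+4r$ and irreducibility from irreducibility of the base and fibers. The only differences are cosmetic: the paper indexes the correspondence from $\mathscr{L}^{r-3}\ccc$ up to $\mathscr{L}^{(r+1)-3}\ccc$ and records the individual fiber dimensions as $6r+6$ and $2r+2$ under its Grassmannian convention (each shifted by $2$ from your values, so the difference $4r+4$ agrees with your $4r$ after re-indexing), and the closing paragraph of your proposal about generic pencils having no common component is exactly the point the paper treats implicitly.
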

\begin{proof} We use induction over $r$. For the case $r=3$, we know that $\ccc$  is irreducible of dimension $23$. Suppose that $\mathscr{L}^{r-3}\ccc$is an irreducible family of dimension $2r(r+1)-1$. \\
For every $r\geq 3$, we consider the incidence variety 
\[W^{h}_{r}:=\{ (C_{r},C_{r+1})\in{\mathscr{L}^{r-3}\ccc\times \mathscr{L}^{(r+1)-3}\ccc}|C_{r}\cup C_{r+1}=X\cap X^{\prime}\}\] with $X$ and $X^{\prime}$ surfaces of degree $r+1$. That is, the elements of   $W_{r}^{h}$ are pairs of curves in  $\mathscr{L}^{r-3}\ccc\times \mathscr{L}^{(r+1)-3}\ccc$ that are linked by the complete intersection of two surfaces of degree $r+1$. This subvariety comes with  projection morphisms to each factor: 
\[\pi_{0}^{r}:W_{r}^{h} \to \mathscr{L}^{r-3}_{r}\ccc \qquad \text{ and }\qquad \pi_{1}^{r}:W_{r}^{h} \to \mathscr{L}^{(r+1)-3}\ccc. \] 
Using these projection morphisms, we have that:
\begin{align*}
dim \  \mathscr{L}^{(r+1)-3}\ccc&=dim \ W_{r}^{h}- dim \ (\pi_{1}^{r})^{-1} \\
&  =dim \ \mathscr{L}^{r-3}\ccc +dim \  (\pi_{0}^{r})^{-1}-dim \ (\pi_{1}^{r})^{-1}.
\end{align*} 
For a generic element $C_{r}$ in $\mathscr{L}^{r-3}\ccc$ the fibre of $\pi_{0}^{r}$ is the Grassmannian of $2$-planes in the vector space $h^{0}(\PP^{3},\mathscr{I}_{C_{r}}^{h}(r+1))$. Corollary \ref{sg} implies: 
\[dim \  (\pi_{0}^{r})^{-1}=dim\  G(2, h^{0}(\mathscr{I}_{r}^{h}(r+1)))=dim \ G(2, 3r+4)=6r+6. \]
The fibre of $\pi_{1}^{r}$ for a generic element $C_{r+1}$ in $\mathscr{L}^{(r+1)-3}\ccc$ is the Grassmannian of 2-planes in the vector space $h^{0}(\PP^{3},\mathscr{I}_{C_{r+1}}^{h}(r+1))$ and again Corollary \ref{sg} implies:
\[ dim \  (\pi_{1}^{r})^{-1}= dim  \ G(2, h^{0}(\mathscr{I}_{r+1}^{h}(r+1))) =dim \ G(2, r+2)=2r+2 .\]
Thus, we obtain:
\begin{align*}
dim \ \mathscr{L}^{(r+1)-3}\ccc&=dim \ \mathscr{L}^{r-3}\ccc+dim \  (\pi_{0}^{r})^{-1}-dim \ (\pi_{1}^{r})^{-1}\\
&= 2r(r+1)-1+(6r+6)-(2r+2)\\
&=2r^{2}+6r+3\\
&=2(r+1)(r+2)-1.
\end{align*}
On the other hand, the family $\ccc$ is irreducible and the Grassmannian is irreducible. Therefore, the fibres of $\pi_{0}^{3}$ and $\pi_{1}^{3}$ are irreducible. In particular that implies that $\mathscr{L}\ccc$ is irreducible and inductively is follows that all families $\mathscr{L}^{r-3}\ccc$ are irreducible.

\end{proof}

\medskip
We can reproduce the proof of Corollary \ref{sg} and Proposition \ref{dimch}  in the case of the families $\mathscr{L}^{r-3}\aaa$ and obtain the following:

\begin{cor}\label{corant}
$ $
\begin{enumerate}
\item Let $C$ be a generic curve in $\mathscr{L}^{r-3}\aaa$, then:
\begin{itemize}
\item[a)] $ h^{0}(\PP^{3}, \mathscr{I}_{C}(r-3))=0$
\item[b)]$h^{0}(\PP^{3},\mathscr{I}_{C}(r-1)) =\left\{
	       \begin{array}{ll}
		 \text{0}     & \mathrm{if}  \; r \;\mathrm{ is \; odd } \\
		\text{1}     & \mathrm{if}  \; r \; \mathrm{ is \;even }  
		 \end{array}
	     \right.$
\item[c)] $ h^{0}(\PP^{3}, \mathscr{I}_{C}(r))=r+1$
\item[d)]$ h^{0}(\PP^{3}, \mathscr{I}_{C}(r+1))=3r+4$; in general we have:

\[ h^{0}(\PP^{3}, \mathscr{I}_{C}(r+a))=\frac{(a+1)(a+2)}{2}r + \frac{(a+1)(a+2)(a+3)}{6} \quad \text{for all }a\geqslant 1. \]
\end{itemize}

  \item The family $\mathscr{L}^{r-3}\aaa$ is irreducible of dimension  $2r(r+1)-1$.

\end{enumerate}
\end{cor}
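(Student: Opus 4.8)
Corollary \ref{corant} is engineered to be the $\aaa$-analogue of Corollary \ref{sg} and Proposition \ref{dimch}, and the plan is simply to transcribe those two proofs, replacing $\ccc$ by $\aaa$ and invoking Lemma \ref{lem:cohant} wherever Lemma \ref{lemacoh} was used. Neither the liaison identity (\ref{eqstar}) nor the dimension count of the linking incidence variety distinguishes the two codimension-one families of $\oc$; the sole difference in the conclusions is the parity in part (1)(b), which is precisely the parity of the degree carrying the one-dimensional Rao module, i.e. exactly the information recorded by Lemma \ref{lem:cohant}. So the real content is to verify that the parities match up.

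\textbf{Part (1).} I would prove (a)--(d) by a joint induction on $r$. The base case $r=3$ is read off from the description of $\aaa$ in Section \ref{sect6,3}: a generic $C\in\aaa$ lies on no surface of degree $\le 2$ (a quadric through its plane quartic must contain that plane and hence be reducible, and then it cannot contain the two skew lines of $C$), so $h^{0}(\PPP,\ii_{C}(0))=h^{0}(\PPP,\ii_{C}(2))=0$; and its minimal free resolution gives $h^{0}(\PPP,\ii_{C}(3))=4$, $h^{0}(\PPP,\ii_{C}(4))=13$. This is (a), (b) (note $r=3$ is odd, predicting $0$), (c), (d) for $r=3$. For the inductive step pick a generic $C_{r}\in\mathscr{L}^{r-3}\aaa$ and a curve $C_{r-1}\in\mathscr{L}^{(r-1)-3}\aaa$ linked to it by a complete intersection of two degree-$r$ surfaces, one of which may be taken smooth so that Proposition \ref{star} applies, and substitute into (\ref{eqstar}) the twists $m=r-3,\,r-1,\,r,\,r+1$. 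By Lemma \ref{lem:cohant} every $h^{1}$ occurring vanishes except the single term $h^{1}(\PPP,\ii_{C_{r}}(r-1))$ that feeds (b); every $h^{0}$ term on the right of (\ref{eqstar}) sits at the smaller index $r-1$ and is given by the induction hypothesis (with the monotonicity of $n\mapsto h^{0}(\PPP,\ii(n))$ taking care of the negative twists in (d)). Carrying out the cancellations exactly as in Corollary \ref{sg} gives (a), (c), (d), while (\ref{eqstar}) at $m=r-1$ collapses to $h^{0}(\PPP,\ii_{C_{r}}(r-1))=h^{1}(\PPP,\ii_{C_{r}}(r-1))$, which Lemma \ref{lem:cohant} evaluates to $1$ for $r$ even and $0$ for $r$ odd: this is (b), with parity opposite to Corollary \ref{sg}(b), as it must be.

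\textbf{Part (2).} Again induct on $r$. For $r=3$ the statement is that $\aaa$ is irreducible of dimension $23=2\cdot 3\cdot 4-1$, which holds since $\aaa$ is dominated by the irreducible parameter variety consisting of a pair of skew lines, a plane meeting both, and a plane quartic in that plane through the two intersection points. For the step introduce the linking incidence variety
\[ W^{a}_{r}:=\{(C_{r},C_{r+1})\in\mathscr{L}^{r-3}\aaa\times\mathscr{L}^{(r+1)-3}\aaa \mid C_{r}\cup C_{r+1}=X\cap X^{\prime}\} \]
with $X,X^{\prime}$ of degree $r+1$, and its two projections $\pi_{0}^{r},\pi_{1}^{r}$. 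Over a generic $C_{r}$ the fibre of $\pi_{0}^{r}$ is the Grassmannian of pencils of degree-$(r+1)$ surfaces through $C_{r}$, whose dimension is determined by $h^{0}(\PPP,\ii_{C_{r}}(r+1))=3r+4$ (part (1)(d)); over a generic $C_{r+1}$ the fibre of $\pi_{1}^{r}$ is the Grassmannian of pencils inside $H^{0}(\PPP,\ii_{C_{r+1}}(r+1))$, whose dimension is determined by $h^{0}(\PPP,\ii_{C_{r+1}}(r+1))=r+2$ (part (1)(c) with $r+1$ in place of $r$). Feeding these into $\dim\mathscr{L}^{(r+1)-3}\aaa=\dim\mathscr{L}^{r-3}\aaa+\dim(\pi_{0}^{r})^{-1}-\dim(\pi_{1}^{r})^{-1}$, the arithmetic of Proposition \ref{dimch} yields $\dim\mathscr{L}^{(r+1)-3}\aaa=2(r+1)(r+2)-1$; and $W^{a}_{r}$ is irreducible, being fibred over the irreducible $\mathscr{L}^{r-3}\aaa$ with irreducible fibres (Grassmannians), so its image $\mathscr{L}^{(r+1)-3}\aaa$ under $\pi_{1}^{r}$ is irreducible as well.

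\textbf{Main obstacle.} There is no substantial obstacle: the proof is a transcription. The one point requiring care is the bookkeeping in part (1) — that for curves in $\mathscr{L}^{r-3}\aaa$ and in $\mathscr{L}^{(r-1)-3}\aaa$ the unique nonzero twist of the Rao module given by Lemma \ref{lem:cohant} avoids all of $r-3,\,r-1,\,r,\,r+1$ except in the single place entering (b), so that every $h^{1}$ discarded from (\ref{eqstar}) is genuinely zero; this follows from $r-3<r-2<r-1<r<r+1$. Beyond that one should confirm that a generic member of $\mathscr{L}^{r-3}\aaa$ appears as a divisor on a \emph{smooth} degree-$r$ surface of the linking complete intersection, so that Proposition \ref{star} legitimately applies — a standard genericity point, the same one implicit in the proof of Corollary \ref{sg}.
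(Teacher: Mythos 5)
Your proposal is correct and follows essentially the same route as the paper, which itself proves Corollary \ref{corant} by declaring that the proofs of Corollary \ref{sg} and Proposition \ref{dimch} can be reproduced verbatim for $\mathscr{L}^{r-3}\aaa$, with Lemma \ref{lem:cohant} supplying the flipped parity in part (1)(b). Your base-case verification for $\aaa$ and the identical incidence-variety dimension count match the paper's intended argument, so there is nothing further to add.
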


To finish this section we will verify that indeed $\hh_{4}$ has at least three components:

\begin{prop} \label{componentsC4}
The Hilbert scheme $\hh_{4}$ of curves of degree $10$ and genus $11$ is reducible and has at least the following three components:
\begin{enumerate}
 \item The component of ACM curves, denoted by $\overline{\cc_{4}}$, has dimension $40$.
 \item The component of extremal curves, denoted by $\mathscr{E}_{4}$, has dimension $92$.
 \item A component $\mathscr{R}$ of dimension at least $46$ that contains a family that parametrizes the union of two disjoint plane quintic curves.
\end{enumerate}
\end{prop}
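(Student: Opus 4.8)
The plan is to exhibit three distinct irreducible families in $\hh_4$, show each lies in (or is) a component, and compute or bound dimensions; distinctness will follow from incompatible invariants (Rao module, smoothness, number of components of a generic member). For item (1), the component $\overline{\cc_4}$ of ACM curves exists and is unique by Theorem~\ref{acm}, and its dimension is $4d_4 = 2\cdot 4\cdot 5 = 40$ by the dimension lemma in Remark~\ref{incidence} (the computation from \cite{ell}*{Thm 2}). For item (2), the existence of a generically nonreduced ``extremal'' component $\mathscr{E}_4$ follows from \cite{mdp1}*{Thm 4.3}: with $d_4=10$ and $g_4=11$, the stated formula $\tfrac32 d_r(d_r-3) + 9 - 2g_r$ gives $\tfrac32\cdot 10\cdot 7 + 9 - 22 = 105 - 13 = 92$, so $\dim \mathscr{E}_4 = 92$. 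Since $92 \neq 40$ and the generic element of $\mathscr{E}_4$ has non-trivial (indeed infinite-dimensional behavior governed by the extremal) Rao module while ACM curves have trivial Rao module, these are genuinely different components, so $\hh_4$ is reducible.

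For item (3), first I would verify that the union $X$ of two disjoint plane quintic curves (each a plane curve of degree $5$ in its own plane, the two planes meeting in a line disjoint from both quintics) is a locally Cohen--Macaulay curve of degree $10$ and arithmetic genus $11$: degree is $5+5=10$, and $p_a = p_a(C_1) + p_a(C_2) + \#(C_1\cap C_2) - 1 = 6 + 6 + 0 - 1 = 11$, so $X \in \hh_4^{lcm}$. Then I would compute the dimension of the family $\mathscr{R}^{\circ}$ of such configurations: the choice of an unordered pair of planes in $\PPP$ contributes $2\cdot 3 = 6$, and a plane quintic in a fixed $\PP^2$ moves in $\PP^{20}$, hence $2\cdot 20 = 40$, giving $\dim \mathscr{R}^{\circ} = 6 + 40 = 46$. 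The family $\mathscr{R}^{\circ}$ is irreducible (it is dominated by an open subset of $\mathds{G}(2,3)^2 \times (\PP^{20})^2$, or rather the relevant symmetric product), so its closure lies in some irreducible component $\mathscr{R}$ of $\hh_4$ with $\dim \mathscr{R} \geq 46$. To see $\mathscr{R}$ is none of the previous two: a generic such $X$ is reducible with exactly two connected components, so it is not smoothable into the integral ACM curves of $\overline{\cc_4}$ — more robustly, its Rao module is non-trivial (two disjoint planar curves fail to be ACM: $h^1(\ii_X(n)) \neq 0$ for suitable $n$, since being disconnected already forces $h^1(\ii_X) \neq 0$ when... actually $h^1(\mathcal{O}_X) $ relates to genus; the cleanest invariant is that $X$ is disconnected so $h^0(\mathcal{O}_X)=2$, whereas ACM curves in $\overline{\cc_4}$ are connected), so $\mathscr{R}\neq\overline{\cc_4}$; and $46 \neq 92$ with the generic member of $\mathscr{R}$ not extremal (an extremal curve of these invariants has a very specific Rao module of large dimension, by \cite{mdp1}), so $\mathscr{R}\neq\mathscr{E}_4$.

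The main obstacle I anticipate is item (3): one must be careful that the family of unions of two disjoint plane quintics really is contained in a \emph{single} component of dimension exactly $\geq 46$ and that this component is distinct from $\overline{\cc_4}$ and $\mathscr{E}_4$ — the dimension count $46$ is only a lower bound for $\dim\mathscr{R}$, so distinctness from $\mathscr{E}_4$ (dimension $92$) cannot rest on dimension alone and genuinely needs the structural observation that the generic element of $\mathscr{E}_4$ is an extremal curve (hence has maximal-dimensional Rao module among curves of degree $10$, genus $11$) while a disjoint union of two plane quintics is not extremal. Distinctness from $\overline{\cc_4}$ is safe because the generic element of $\overline{\cc_4}$ is an integral (indeed smooth) ACM curve, whereas a generic member of $\mathscr{R}$ is disconnected, hence non-ACM; one should note in passing that $\overline{\cc_4}$ is genuinely a proper component and not contained in $\mathscr{R}$ because $40 < 46$ would be consistent with containment, so here one must invoke that $\overline{\cc_4}$ is itself a component by Theorem~\ref{acm}. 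No delicate deformation theory is needed beyond semicontinuity of $h^0(\mathcal{O}_X)$ and the cited results of \cite{mdp1} and \cite{ell}.
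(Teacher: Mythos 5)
Your overall architecture matches the paper's proof: $\overline{\cc_4}$ is a component of dimension $40$ via Theorem \ref{acm}, the extremal component $\mathscr{E}_4$ of dimension $92$ comes from \cite{mdp1}, and the irreducible $46$-dimensional family $Q$ of disjoint unions of two plane quintics (degree $10$, arithmetic genus $6+6-1=11$) forces a third component; all the numerical counts agree with the paper. The gaps are exactly in the two distinctness steps you yourself flag as the main obstacle. To exclude $Q$ from $\overline{\cc_4}$ you invoke semicontinuity of $h^0(\mathcal{O}_X)$, but semicontinuity points the wrong way: it allows $h^0$ to jump up at a flat limit, so by itself it does not forbid a disconnected curve from being a limit of connected ones. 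What you need is the principle of connectedness for proper flat families (flat limits of connected subschemes are connected), or, far more simply, the paper's own argument that $\dim Q=46>40=\dim\overline{\cc_4}$, so $Q$ cannot be contained in that component --- a fact you have available but do not use for this step. Your fallback remark that the elements of $Q$ have non-trivial Rao module is not an obstruction at all: the main theorems of this very paper place curves with non-trivial Rao module inside $\overline{\cc_r}$, so non-ACM-ness cannot exclude $Q$ from $\overline{\cc_4}$.

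For the exclusion of $Q$ from $\mathscr{E}_4$, observing that the generic element of $\mathscr{E}_4$ is extremal while a disjoint union of two plane quintics is not does not finish the argument, since a non-extremal curve could a priori sit in $\mathscr{E}_4$ as a special member. The missing step is upper semicontinuity of each $h^1(\ii_C(n))$: any curve in $\mathscr{E}_4$ is a specialization of extremal curves, hence its Rao module is at least as large, and the paper makes this quantitative by comparing lengths ($25$ for an element of $Q$ versus $425$ for an extremal curve of these invariants, citing \cite{mdp4}). Your closing claim that nothing beyond semicontinuity of $h^0(\mathcal{O}_X)$ is needed is therefore inaccurate on both counts. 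A minor slip in the construction: two distinct planes meet in a line $L$, and a plane quintic in either plane necessarily meets $L$ by B\'ezout, so the parenthetical ``the two planes meeting in a line disjoint from both quintics'' describes an impossible configuration; the correct genericity condition is that the two quintics meet $L$ in disjoint point sets, and this does not change the count $\dim Q=6+2\cdot 20=46$.
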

\begin{proof}
The family of ACM curves is a component of dimension $40$ by Theorem \ref{acm}. The existence of the component $\mathscr{E}_{4}$ is a result from \cite{mdp1}*{Thm.4.3}.

Set $Q$ the family that parametrizes the union of two disjoint plane quintic curves. This family is irreducible of dimension $46$.

The family $Q$ is not contained in $\overline{\cc_{4}}$ since the dimension of $\overline{\cc_{4}}$ is $40$. On the other hand, the Rao module of an element of $Q$ has length $25$ and by \cite{mdp4} the Rao module of an extremal curve has length $425$. Thus, by semi-continuity, $Q$ cannot be contained in the component $\mathscr{E}_{4}$.

Therefore, there exists a component $\mathscr{R}$ different from $\overline{\cc_{4}}$ and $\mathscr{E}_{4}$ that contains $Q$.\\
\end{proof}

\section{Families in the closure of ACM curves}

We expected that the families $\overline{\mathscr{L}^{r-3}\ccc}$ and $\overline{\mathscr{L}^{r-3}\aaa}$ are divisors of the component $\overline{\cc_{r}}$. We have proved that the families $\overline{\mathscr{L}^{r-3}\ccc}$ and $\overline{\mathscr{L}^{r-3}\aaa}$ have dimension $dim \overline{\cc_{r}}-1$. Nevertheless, since the spaces $\hh_{r}$ are reducible for all $r\geq 3$ it is not obvious that these families are contained in the closure of $\cc_{r}$. In this section, we will describe two families in each $\hh_{r}$ that allow us to verify that the families $\mathscr{L}^{r-3}\ccc$ and $\mathscr{L}^{r-3}\aaa$ have the same geometric description when they have different parity. That means, the families $\mathscr{L}^{r-3}\ccc$ and $\mathscr{L}^{r-2}\aaa$ have the same description for all $r$. One of such families has a geometric description and the other a cohomological one. These descriptions allow us to prove that both families are contained in the corresponding component.

Let $\hh_{r}^{smooth}$ be the open set of all smooth curves on $\hh_{r}$.
\begin{itemize}
\item Denote by $\mathscr{D}_{r-1}$ be the closure of the set of all smooth curves in $\hh_{r}$ that lie in a surface of degree $r-1$. That means:
$$\mathscr{D}_{r-1}:=\overline{\{C\in{\hh_{r}^{smooth}}|h^{0}(\PP^{3},\mathscr{I}_{C}(r-1))=1\}}\subseteq \hh_{r}.$$

\item Let $\mathscr{M}_{r}$ be the closure in $\hh_{r}$ of the set of all smooth curves such that the canonical divisor minus $r-2$ times the hyperplane section $H$ has one section and  the rank of their H-R module is $1$. That means:
$$\mathscr{M}_{r}:=\overline{\{C\in{\hh_{r}^{smooth}}|h^{0}(C,K_{C}-(r-2)H)=1\quad \text{ and }\quad lgth( M(C))=1 \}}\subseteq \hh_{r}.$$
\end{itemize}

The relation between these two families and the families $\mathscr{L}^{r-3}\ccc$ and $\mathscr{L}^{r-3}\aaa$ defined in \ref{deffam} is stated in the next Theorem:

\begin{thm}\label{teodes} 
$ $
\begin{enumerate}
\item If $r$ is an odd number, then the closure of the family $\mathscr{L}^{r-3}\ccc$ is equal to the family $\mathscr{D}_{r-1}$ and the closure of the family  $\mathscr{L}^{r-3}\aaa$ is an irreducible component of  the family $\mathscr{M}_{r}$.

\item If $r$ is an even number, then the closure of the family $\mathscr{L}^{r-3}\aaa$ is equal to the family $\mathscr{D}_{r-1}$ and the closure of the family  $\mathscr{L}^{r-3}\ccc$ is an irreducible component of  the family $\mathscr{M}_{r}$.
\end{enumerate}
\end{thm}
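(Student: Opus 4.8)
The plan is to prove both statements simultaneously by treating the odd and even cases in parallel, since they differ only by interchanging the roles of $\ccc$ and $\aaa$ (the parity shift observed in Lemmas \ref{lemacoh} and \ref{lem:cohant} is exactly what makes this possible). First I would establish, for each parity, that a generic curve in $\mathscr{L}^{r-3}\ccc$ (resp.\ $\mathscr{L}^{r-3}\aaa$) is smooth. This should follow by induction on $r$: the base case is that the generic element of $\ccc$, resp.\ $\aaa$, is smooth (for $\ccc$ this is clear since $(2,4)$-curves on a smooth quadric are smooth; for $\aaa$ the generic element is nodal but smoothable, so one passes to a genuinely smooth deformation inside $\oc$), and liaison by a complete intersection of two general surfaces of degree $r$ through a smooth curve produces a smooth residual curve by Bertini, away from the finitely many singular points, which can be avoided by genericity of the surfaces. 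Then I would check that the generic curve $C$ in the linked family actually satisfies the defining conditions of $\mathscr{D}_{r-1}$, resp.\ $\mathscr{M}_{r}$. For $\mathscr{D}_{r-1}$: in the case where the linked family has a section in degree $r-1$ (odd $r$ for $\ccc$, even $r$ for $\aaa$, by part (b) of Corollary \ref{sg} and Corollary \ref{corant}), $h^{0}(\PPP,\mathscr{I}_C(r-1))=1$, so $C\in \mathscr{D}_{r-1}$. For $\mathscr{M}_{r}$: in the complementary parity, $\operatorname{lgth} M(C)=1$ by Lemmas \ref{lemacoh} and \ref{lem:cohant}, and the condition $h^{0}(C,K_C-(r-2)H)=1$ must be translated via Serre duality on $\PPP$ and the exact sequence of $C$ into a statement about $h^{1}(\mathscr{I}_C(r-2))$ or the relevant graded piece of the Rao module, which again is $1$ by those lemmas; here one uses that for a smooth curve $H^{0}(C,K_C - (r-2)H)$ is dual to $H^{1}(C,\mathcal{O}_C(r-2))$, and compares with $H^{1}(\PPP,\mathscr{I}_C(r-2))$ and $H^{2}(\PPP,\mathscr{I}_C(r-2))$ through the structure sequence.

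The core of the argument is the reverse inclusion together with the dimension/component count. By Proposition \ref{dimch} and Corollary \ref{corant}(2), $\overline{\mathscr{L}^{r-3}\ccc}$ and $\overline{\mathscr{L}^{r-3}\aaa}$ are irreducible of dimension $2r(r+1)-1 = \dim \overline{\cc_r} - 1$. For the $\mathscr{D}_{r-1}$ assertion I would compute $\dim \mathscr{D}_{r-1}$ directly: a smooth curve of degree $d_r$, genus $g_r$ lying on a surface of degree $r-1$ moves in a family whose dimension is (dimension of the space of degree $(r-1)$ surfaces) plus (dimension of the linear system cutting out $C$ on a fixed such surface); one checks this equals $2r(r+1)-1$, and irreducibility of $\mathscr{D}_{r-1}$ forces $\mathscr{D}_{r-1} = \overline{\mathscr{L}^{r-3}\ccc}$ (resp.\ $\overline{\mathscr{L}^{r-3}\aaa}$) once we know the linked family is contained in $\mathscr{D}_{r-1}$ and has the same dimension. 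For the $\mathscr{M}_{r}$ assertion, $\mathscr{M}_r$ need not be irreducible, so I would only claim that the linked family is one of its irreducible components: it is a closed irreducible subset of $\mathscr{M}_r$ of dimension $2r(r+1)-1$, so it suffices to show it is not properly contained in a larger irreducible subset of $\mathscr{M}_r$; this follows from showing $\dim \mathscr{M}_r \le 2r(r+1)-1$ near a generic point of the linked family, e.g.\ by a tangent-space or parameter-count argument bounding the curves with a length-one Rao module and the prescribed canonical condition.

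The main obstacle I anticipate is the dimension count for $\mathscr{M}_r$, i.e.\ controlling the locus of smooth curves in $\hh_r$ whose Rao module has length exactly one together with the condition $h^{0}(C,K_C-(r-2)H)=1$, and showing this locus does not acquire extra components of dimension $\ge 2r(r+1)-1$ containing $\overline{\mathscr{L}^{r-3}\ccc}$ (resp.\ $\overline{\mathscr{L}^{r-3}\aaa}$) strictly. The clean way around this is to invoke the theory of \cite{mdp2}: curves with a fixed Rao module of length one and fixed postulation/speciality characters form a well-understood stratum (the $H_{\gamma,M}(Y)$ of the remark after Definition of linked families), and the linked family is obtained from $\ccc$ or $\aaa$ by an isomorphism of flag schemes $\mathcal{D}_{\gamma,M,s,s}\cong \mathcal{D}_{\gamma',M',s,s}$; this identifies the linked family with a full stratum, hence a component of $\mathscr{M}_r$. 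I would also need to double-check that the two parities genuinely produce the \emph{same} geometric family $\mathscr{D}_{r-1}$ — that is, that a generic curve on a surface of degree $r-1$ with these invariants has a length-one Rao module with the module sitting in the single degree predicted — which is a bookkeeping check using the characters tabulated for $\ccc$ and $\aaa$ in Section \ref{sect6,3} propagated through the liaison formula in Proposition \ref{star}.
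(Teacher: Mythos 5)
Your first half matches the paper: showing that a generic (smooth) curve of the linked family lies in $\mathscr{D}_{r-1}$, resp. $\mathscr{M}_{r}$, is done exactly as you describe, using Corollary \ref{sg}/\ref{corant} for $h^{0}(\PPP,\mathscr{I}_{C}(r-1))=1$, and Lemmas \ref{lemacoh}/\ref{lem:cohant} together with the structure sequence, Serre duality and Riemann--Roch to get $\operatorname{lgth}M(C)=1$ and $h^{0}(C,K_{C}-(r-2)H)=1$. The problem is the reverse direction, which is the actual content of the theorem, and there your plan has a genuine gap. For the equality with $\mathscr{D}_{r-1}$ you rely on two unproved inputs: (i) irreducibility of $\mathscr{D}_{r-1}$, which is not known beforehand (it is defined as the closure of a cohomologically cut-out locus of smooth curves and could a priori have several components; its irreducibility is in effect a consequence of the theorem, so assuming it is circular), and (ii) the dimension count $\dim\mathscr{D}_{r-1}=2r(r+1)-1$ via ``space of degree $(r-1)$ surfaces plus linear system''. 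That count does not work naively: by Noether--Lefschetz a general surface of degree $r-1\geq 4$ has Picard group generated by the hyperplane class, and $d_{r}=r(r+1)/2$ is in general not a multiple of $r-1$, so the curves of $\mathscr{D}_{r-1}$ only lie on special surfaces of degree $r-1$ and the parameter count would require controlling that special locus — essentially as hard as the statement itself. Similarly, the bound $\dim\mathscr{M}_{r}\leq 2r(r+1)-1$ near the linked family is asserted, not established, and the appeal to the strata $H_{\gamma,M}$ of \cite{mdp2} would still require identifying $\mathscr{M}_{r}$ (defined by conditions on smooth curves) with such a stratum.

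The paper avoids all dimension counts. For $\mathscr{D}_{r-1}\subseteq\overline{\mathscr{L}^{r-3}\ccc}$ it argues by induction on $r$ with an explicit double-liaison descent: given a smooth $C$ with $h^{0}(\PPP,\mathscr{I}_{C}(r-1))=1$, link it by two degree-$r$ surfaces (one smooth by Bertini) to $C^{\prime}\in\hh_{r-1}$; Proposition \ref{star} gives $h^{0}(\PPP,\mathscr{I}_{C^{\prime}}(r-1))>r-1$, so one can link $C^{\prime}$ again by degree-$(r-1)$ surfaces to $C^{\prime\prime}\in\hh_{r-2}$, and a second application of Proposition \ref{star} together with the duality $H^{1}(\mathscr{I}_{C^{\prime}}(r-3))\cong H^{1}(\mathscr{I}_{C}(r-1))^{\vee}$ yields $h^{0}(\PPP,\mathscr{I}_{C^{\prime\prime}}(r-3))=h^{0}(\PPP,\mathscr{I}_{C}(r-1))=1$; the inductive hypothesis puts $C^{\prime\prime}$ in $\mathscr{L}^{(r-2)-3}\ccc$, and climbing the two links back gives $C\in\mathscr{L}^{r-3}\ccc$. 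For the second assertion it takes the irreducible component $\mathscr{M}^{\prime}$ of $\mathscr{M}_{r}$ containing the (irreducible) linked family and shows by the same Euler-characteristic comparison that a \emph{generic} element of $\mathscr{M}^{\prime}$ has $h^{1}(\PPP,\mathscr{I}_{C}(r-2))=1$ with Rao module of length one, hence already lies in the linked family, so $\mathscr{M}^{\prime}=\overline{\mathscr{L}^{r-3}\aaa}$ (resp. $\overline{\mathscr{L}^{r-3}\ccc}$ for even $r$). If you want to salvage your route you would have to prove the irreducibility and dimension statements for $\mathscr{D}_{r-1}$ and $\mathscr{M}_{r}$ independently; as written, the liaison-theoretic descent is what carries the proof, and your proposal does not supply a substitute for it.
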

\begin{proof} Suppose that $r$ is an odd number.
\item First we proof that $\overline{\mathscr{L}^{r-3}\ccc}=\mathscr{D}_{r-1}$. The set $\mathscr{L}^{r-3}\ccc \cap  \hh_{r}^{smooth}$ is dense in $\mathscr{L}^{r-3}\ccc$ and by Corollary \ref{sg} we have that $\mathscr{L}^{r-3}\ccc \cap  \hh_{r}^{smooth} \subseteq  \mathscr{D}_{r-1}$. To prove the other containment  we use induction over $s$, with  $r=2s+1$. The base case holds by definition. Let us assume it holds for $r-2=2(s-1)+1$.
    
Let $C\in{\hh_{r}^{smooth}\cap \mathscr{D}_{r-1}} $, that means, $h^{0}(\PP^{3},\mathscr{I}_{C}(r-1))=1$. We can then find two surfaces without common components  $X$ and $Y$ of degree $r$ that contains $C$. By Bertini's Theorem we can assume that $X$ is smooth. Then we have a curve $C^{\prime}\in{\mathscr{H}_{r-1}}$ linked to $C$ by the complete intersection of  $X$ and  $Y$ and by the Proposition \ref{star} we have that:
    \begin{align*}
    h^{0}(\PP^{3},\mathscr{I}_{C^{\prime}}^{h}(r-1))&= h^{1}(\PP^{3}\mathscr{I}_{C}(r-3))-h^{0}(\mathscr{I}_{C}(r-3))+r.
   \end{align*}

Since $h^{0}(C,\mathscr{I}_{C}(r-1))=1$, we have that $h^{0}(C,\mathscr{I}_{C}(r-3))=0$, thus $h^{0}(\mathscr{I}_{C^{\prime}}^{h}(r-1))> r-1$. Then we can link $C^{\prime}$ to a curve $C^{\prime \prime}\in{\mathscr{H}_{r-2}}$ by the complete intersection of two surfaces without common components $X^{\prime}$ and $Y^{\prime}$ of degree $r-1$.
    
Again by Proposition \ref{star}, we have that:
\[h^{0}(\PP^{3},\mathscr{I}_{C}(r-1))=h^{1}(\PP^{3},\mathscr{I}_{C}(r-1))-h^{0}(\PP^{3},\mathscr{I}_{C^{\prime}}(r-3))+ h^{0}(\PP^{3},\mathscr{I}_{C^{\prime \prime}}(r-3)).\]

By construction $C$ and $C^{\prime}$ are linked by the complete intersection of surfaces of degree $r$, which implies $H^{1}(\PP^{3},\mathscr{I}_{C^{\prime}}(r-3))\cong H^{1}(\PP^{3},\mathscr{I}_{C}(r-1))^{\vee}$. Therefore $h^{0}(\PP^{3},\mathscr{I}_{C^{\prime \prime}}(r-3))=h^{0}(\PP^{3},\mathscr{I}_{C}(r-1))=1$. Then, by induction $C^{\prime \prime}\in{\mathscr{D}_{(r-2)-1}=\mathscr{L}^{(r-2)-3}\ccc}$ and the definition of our families it follows that $C^{\prime}\in{\mathscr{L}^{(r-1)-3}\ccc}$ and thus $C\in{\mathscr{L}^{r-3}\ccc}$.\\ 
   
   \item Now we prove that $\overline{\mathscr{L}^{r-3}\aaa}$ is a component of $\mathscr{M}_{r}$. Let us start with a generic element $C$ in $\mathscr{L}^{r-3}\aaa$. It follows that $h^{1}(\PP^{3},\mathscr{I}_{C}(r-2))=1$ and $h^{1}(\PP^{3},\mathscr{I}_{C}(n))=0$ for all $n\not= r-2$ by Lemma \ref{lem:cohant}. Hence the H-R module of $C$ has length one. On the other hand, by the exact sequence:
   \[0\to \mathscr{I}_{C}(r-2) \to \mathcal{O}_{\PP^{3}}(r-2) \to \mathcal{O}_{C}(r-2) \to 0,\]
   
   we have the next exact sequence in cohomology:\\
   \begin{equation}\label{sucex}
\xymatrix{0 \ar[r] &H^{0}(\PP^{3},\mathscr{I}_{C}(r-2))\ar[r]& H^{0}(\PP^{3},\mathcal{O}_{\PP^{3}}(r-2)) \ar[r]& H^{0}(C,\mathcal{O}_{C}(r-2)) \ar@(dr,ul)[dll]&\\
 &H^{1}(\PP^{3},\mathscr{I}_{C}(r-2))\ar[r] & H^{1}(\PP^{3},\mathcal{O}_{\PP^{3}}(r-2)) \ar[r] &H^{1}(C,\mathcal{O}_{C}(r-2)) \ar@(dr,ul)[dll] &\\
 &H^{2}(\PP^{3},\mathscr{I}_{C}(r-2))\ar[r]& H^{2}(\PP^{3},\mathcal{O}_{\PP^{3}}(r-2)) \ar[r]&H^{2}(C,\mathcal{O}_{C}(r-2)) \ar[r]& \ldots}   
   \end{equation} 
   
   By Serre duality $H^{1}(C,\mathcal{O}_{C}(r-2)) \cong H^{0}(C,\omega_{C}\otimes \mathcal{O}_{C}(r-2)^{\vee})$, thus using Riemann-Roch we have that:
   \begin{equation}\label{equ1}
  \chi \mathcal{O}_{C}(r-2)+1 =(r-2)d_{r}+1-g_{r}+1=\binom{r+1}{3}+1 
   \end{equation}
   
   On the other hand, $h^{0}(\PP^{3},\mathscr{I}_{C}(r-2))=0$ by Corollary \ref{corant}, thus from the sequence (\ref{sucex}) it follows that: 
   \begin{equation} \label{equ2}
 h^{0}(C,\mathcal{O}_{C}(r-2))= h^{0}(\PP^{3},\mathcal{O}_{\PP^{3}}(r-2))+h^{1}(\PP^{3},\mathscr{I}_{C}(r-2))=\binom{r-2+3}{3} + 1.
  \end{equation}
   
  Then the equations (\ref{equ1}) and (\ref{equ2}) imply that $h^{1}(C,\mathcal{O}_{C}(r-2))=1$; therefore $C\in{\mathscr{M}_{r}}$. 
 
 Since $\mathscr{L}^{r-3}\aaa$ is irreducible, there exists an irreducible component of $\mathscr{M}$ that contains it. We denote this component by  $\mathscr{M}^{\prime}$. Let $C$ be a generic element of $\mathscr{M}^{\prime}$. Observe that we have an exact sequence as (\ref{sucex}) for the ideal of $C$. By Serre duality we have that $h^{1}(C,\mathcal{O}_{C}(r-2))= h^{0}(C,K_{C}-(r-2)H)=1$. Thus 
 \begin{align*}
 h^{0}(C,\mathcal{O}_{C}(r-2))-1=h^{0}(C,\mathcal{O}_{C}(r-2))-h^{1}(C,\mathcal{O}_{C}(r-2))
=\chi \mathcal{O}_{C}(r-2)\\=h^{0}(\PP^{3},\mathcal{O}_{\PP^{3}}(r-2))=h^{0}(C,\mathcal{O}_{C}(r-2))-h^{1}(\PP^{3},\mathscr{I}_{C}(r-2)).
\end{align*}
Therefore $h^{1}(\PP^{3},\mathscr{I}_{C}(r-2))=1$  but $lgthM(C)=1$ implies that $C\in{\mathscr{L}^{r-3}\aaa}$.
 \item The proof in which $r$ is even is analogous.

\end{proof}

With these descriptions we can verify that these families are actually related by an elemental biliaison (cf. \cite{mdp2}*{III def. 2.1}).
\begin{prop} \label{prop1}
    For all $r\geq 3$. Given a curve $C$ in $\mathscr{D}_{r-1}\subseteq \hh_{r}$, then there exists a curve $C^{\prime}$ in $\mathscr{D}_{r}\subseteq \hh_{r+1}$ that is obtained from $C$ by an elementary biliason of degree $r+1$ and height $1$, or, abbreviated an elementary biliason $(r+1,1)$.
\end{prop}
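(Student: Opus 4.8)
The plan is to realize the passage from $\mathscr{D}_{r-1}$ to $\mathscr{D}_{r}$ as a single elementary biliaison step obtained by composing two direct links. Concretely, take a generic $C \in \mathscr{D}_{r-1} \subseteq \hh_{r}$, so $h^{0}(\PPP,\ii_{C}(r-1))=1$ and (by Lemma \ref{lemacoh}/Corollary \ref{sg}, via Theorem \ref{teodes}) $h^{0}(\PPP,\ii_{C}(r))=r+1$, with Rao module of length one concentrated in the appropriate degree. First I would pick two surfaces $X, Y$ of degree $r$ containing $C$ with no common component, with $X$ smooth by Bertini, and form the residual curve $C_{1} \in \hh_{r-1}$ linked to $C$ by $X \cap Y$. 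Using Proposition \ref{star} I compute $h^{0}(\PPP,\ii_{C_{1}}(r))$ and $h^{0}(\PPP,\ii_{C_{1}}(r-1))$, which will be large enough that $C_{1}$ lies on at least two surfaces of degree $r+1$ (in fact on surfaces of both degrees $r-1$ and $r$). Then I link $C_{1}$ forward by the complete intersection of two surfaces of degree $r+1$ (again smooth by Bertini) to get a curve $C^{\prime} \in \hh_{r+1}$.

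The second step is to check that $C^{\prime}$ actually lands in $\mathscr{D}_{r}$, i.e. $h^{0}(\PPP,\ii_{C^{\prime}}(r))=1$. This again follows from Proposition \ref{star} applied to the link between $C^{\prime}$ and $C_{1}$: it expresses $h^{0}(\PPP,\ii_{C^{\prime}}(r))$ in terms of $h^{1}(\PPP,\ii_{C^{\prime}}(r))$ and $h^{0}(\PPP,\ii_{C_{1}}(\text{something}))$, and by the duality $H^{1}(\ii_{C^{\prime}}(n)) \cong H^{1}(\ii_{C_{1}}(2r-2-n))^{\vee}$ for the link of degree $r+1$, together with the known one-dimensional Rao module of $C$ (hence of $C_{1}$, hence of $C^{\prime}$), these quantities are pinned down. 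The key point to extract is that $C$ and $C^{\prime}$ differ by a biliaison of height $1$: by construction $C$ and $C^{\prime}$ are both linked to the common curve $C_{1}$, the first link by surfaces of degree $r$ and the second by surfaces of degree $r+1$, and the standard bookkeeping of biliaison (cf. \cite{mdp2}*{III, def. 2.1 and the surrounding discussion}) identifies the composition of two such links with an elementary biliaison whose degree is $r+1$ and whose height is $(r+1)-r = 1$. One should also verify the degree and genus come out right, namely that $C^{\prime}$ has the invariants $d_{r+1}, g_{r+1}$; this is immediate from $d^{\prime}=s^{2}-d$ and the genus formula recalled in the preliminaries, applied twice.

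I would then remark that, by Theorem \ref{teodes}, $\mathscr{D}_{r-1} = \overline{\mathscr{L}^{r-3}\ccc}$ (or $\overline{\mathscr{L}^{r-3}\aaa}$, depending on parity) and likewise $\mathscr{D}_{r}$ is the corresponding linked family one step up, so the biliaison statement is consistent with Definition \ref{deffam}; but the proof itself is purely local-on-curves and does not need that identification beyond the cohomological input it provides. Finally, since the construction uses only generic choices (Bertini for smoothness of the intermediate surfaces, genericity of $C$ for the cohomology dimensions), it extends from the generic curve of $\mathscr{D}_{r-1}$ to all of $\mathscr{D}_{r-1}$ by a standard specialization/semicontinuity argument on the incidence variety of linked pairs; alternatively one simply states the conclusion for a general $C$, which is all that is needed downstream.

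The main obstacle I anticipate is controlling $h^{0}(\PPP,\ii_{C_{1}}(r+1))$ and the $h^{1}$ terms precisely enough to guarantee both that the forward link by degree $r+1$ surfaces with no common component exists and that the resulting $C^{\prime}$ has exactly a one-dimensional space of sections in degree $r$ rather than more; this is exactly the kind of computation Proposition \ref{star} and Lemma \ref{lemacoh} were set up to handle, so it should go through, but it requires care with the parity of $r$ (the Rao module sits in degree $r-1$ or $r-2$ depending on parity, which shifts the relevant cohomology by one) and with checking that the two surfaces of degree $r+1$ chosen through $C_{1}$ genuinely cut out $C \cup C^{\prime}$ with no extraneous components.
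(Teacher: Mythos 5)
Your construction has a genuine gap, in fact two related ones. First, the numerology fails: linking $C\in{\hh_{r}}$ by the complete intersection of two surfaces of degree $r$ gives $C_{1}\in{\hh_{r-1}}$ of degree $d_{r-1}=\tfrac{r(r-1)}{2}$, and linking $C_{1}$ by two surfaces of degree $r+1$ then produces a residual curve of degree $(r+1)^{2}-d_{r-1}=\tfrac{r^{2}+5r+2}{2}$, which is \emph{not} $d_{r+1}=\tfrac{(r+1)(r+2)}{2}$ (it is too large by $r$). So the curve $C^{\prime}$ you build does not lie in $\hh_{r+1}$ at all, and the step you call ``immediate from $d^{\prime}=s^{2}-d$ applied twice'' is exactly where the computation breaks. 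Second, and more fundamentally, an elementary biliaison of degree $r+1$ and height $1$ requires the two successive links to be performed inside one fixed surface $Q$ of degree $r+1$: if $C\cup C_{1}=Q\cap S$ and $C_{1}\cup C^{\prime}=Q\cap S^{\prime}$ with $\deg S=r$ and $\deg S^{\prime}=r+1$, then on $Q$ one has $C^{\prime}\sim C+H$, which is the definition. Your two links (first by a pair of degree-$r$ surfaces, then by a pair of degree-$(r+1)$ surfaces) share no common surface, so the ``standard bookkeeping'' you invoke --- height equals the difference of the degrees of the two links --- does not apply; composing them only keeps you in the same even liaison class, it does not produce an elementary biliaison $(r+1,1)$.

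The paper's proof avoids both problems and is much shorter than the cohomological route you sketch. Since $h^{0}(\PPP,\ii_{C}(r))=r+1$ (and $C$ also lies on surfaces of degree $r+1$), one chooses a surface $Q$ of degree $r+1$ and a surface $S$ of degree $r$ through $C$ with no common component; then $Q\cap S=C\cup C_{1}$ with $C_{1}\in{\hh_{r}}$ --- note the intermediate curve has the \emph{same} degree and genus as $C$, not the invariants of $\hh_{r-1}$ --- and a second surface of degree $r+1$ through $C_{1}$, meeting the degree-$(r+1)$ surface properly, links $C_{1}$ to $C^{\prime}\in{\hh_{r+1}}$. This is by definition an elementary biliaison $(r+1,1)$, and membership $C^{\prime}\in{\mathscr{D}_{r}}$ drops out in one line from the biliaison formula of \cite{mdp2}*{III, 3.4}: $h^{0}(\PPP,\ii_{C^{\prime}}(r))=h^{0}(\PPP,\ii_{C}(r-1))=1$. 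In particular no parity analysis, no appeal to Proposition \ref{star}, Lemma \ref{lemacoh} or Corollary \ref{sg}, and no semicontinuity or genericity argument on incidence varieties is needed: the computation applies directly to any $C$ with the section counts defining $\mathscr{D}_{r-1}$. If you want to salvage your approach, the essential repair is to route both links through a common surface of degree $r+1$ containing $C$, at which point it becomes the paper's argument.
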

\begin{proof}
    Since $h^{0}(\PPP, \mathscr{I}_{C}(r))=r+1$, we can consider a surface $Q$ of degree $r+1$ that contains $C$ and $S$ a surface of degree $r$ that contains $C$ without common components with $Q$. Then $Q\cap S=C\cup C_{1}$, with $C_{1}\in{\hh_{r}}$. Let $S^{\prime}$ a surface of degree $r+1$ that contains $C_{1}$ without common components with $S$. Thus $S\cap S^{\prime}=C_{1}\cup C^{\prime}$ with $C^{\prime}\in{\hh_{r+1}}$. By definition $C^{\prime}$ is obtained from $C$ by an elementary biliason $(r+1,1)$. Therefore is enough to prove that $C^{\prime}$ is an element of $\mathscr{D}_{r}$.
By \cite{mdp2}*{III 3.4} we have that:
    $$ h^{0}(\PPP,\mathscr{I}_{C^{\prime}}(r)=h^{0}(\PPP,\mathscr{I}_{C}(r-1))+ \binom{r-(r+1)+2}{2}=h^{0}(\PPP,\mathscr{I}_{C}(r-1))$$
   the last term is equal to one since $C$ is an element of $\mathscr{D}_{r-1}$ and by definition this implies that $C^{\prime}\in{\mathscr{D}_{r}}$.\\
\end{proof}
This proposition allows us to compute the character of postulation and speciality of the curves in the families $\mathscr{D}_{r-1}$:

\begin{cor} \label{corresolutionsdr}
    For $r\geq 3$, let $C$ a curve in $\mathscr{D}_{r-1}$ then its character of postulation ($\gamma_{C}$) and speciality ($\sigma_{C}$) are given by:
              \begin{center}
\begin{tabular}{ c| c c }
$n$ &  $\gamma_{C}$ & $\sigma_{C}$ \\ \hline 
0 &-1 &1 \\
1 & -1 & 1  \\
$\vdots $& $\vdots$ & $\vdots$  \\
r-2 &-1 & 1   \\
r-1 &0 & 1\\
r & r-3& -r \\
r+1 &3 &0 \\
r+2 & -1& 0
\end{tabular}
\end{center} 
and the minimal free resolution of $C$ is:
{\scriptsize  \begin{equation} \tag{$\#$} \label{resolDr}
    \xymatrix{ 0\to \mathcal{O}_{\PPP}(-(r+3)) \to \mathcal{O}_{\PPP}(-(r+2))^{4}\oplus \mathcal{O}_{\PPP}(-(r+1))^{r-6}\to \mathcal{O}_{\PPP}(-r)^{r-3}\oplus \mathcal{O}_{\PPP}(-(r-1))\ar[r]^{\qquad \qquad \qquad \qquad \qquad \qquad \qquad \qquad \qquad \pi_{r}}&  \ii_{C} \to 0 }\end{equation}}
\end{cor}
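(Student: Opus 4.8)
The plan is to compute the graded cohomology of $\mathscr{I}_C$ for $C \in \mathscr{D}_{r-1}$ and then read off the minimal free resolution from the Betti numbers forced by that cohomology. I would work by induction on $r$, using Proposition \ref{prop1}, which exhibits every $C' \in \mathscr{D}_r$ as an elementary biliaison $(r+1,1)$ of some $C \in \mathscr{D}_{r-1}$; by Theorem \ref{teodes}, $\mathscr{D}_{r-1} = \overline{\mathscr{L}^{r-3}\ccc}$ or $\overline{\mathscr{L}^{r-3}\aaa}$ according to parity, so a generic curve is smooth and connected, and its Rao module is $k$ concentrated in a single degree by Lemmas \ref{lemacoh} and \ref{lem:cohant}. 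The base case $r=3$ is the table for $\ccc$ (or $\aaa$) already recorded in Section \ref{sect6,3}.

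First I would pin down $\gamma_C$ and $\sigma_C$. The postulation character is computed from $h^0(\PPP,\mathscr{I}_C(n))$: Corollary \ref{sg} (and Corollary \ref{corant}) gives $h^0(\mathscr{I}_C(r-3)) = 0$, $h^0(\mathscr{I}_C(r-1)) = 1$, $h^0(\mathscr{I}_C(r)) = r+1$, $h^0(\mathscr{I}_C(r+1)) = 3r+4$, and the general formula $h^0(\mathscr{I}_C(r+a)) = \tfrac{(a+1)(a+2)}{2}r + \tfrac{(a+1)(a+2)(a+3)}{6}$; since $C$ lies on no surface of degree $< r-1$ and on exactly one of degree $r-1$, the values $h^0(\mathscr{I}_C(n)) = 0$ for $n \le r-2$ (except the jump to $1$ at $n=r-1$) are forced, and differencing the Hilbert function $h^0(\mathcal{O}_{\PPP}(n)) - h^0(\mathscr{I}_C(n))$ against $\chi(\mathcal{O}_C(n))$ yields the stated $\gamma_C$. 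For $\sigma_C$, the speciality character is governed by $h^1(\mathscr{I}_C(n)) = h^1(\mathcal{O}_C(n))$ for $n$ small and by $h^2(\mathscr{I}_C(n)) = h^1(\mathcal{O}_C(n))$ in the relevant range; Serre duality on $C$ together with the Riemann–Roch computations in the proof of Theorem \ref{teodes} (the identities \eqref{equ1}, \eqref{equ2}) give $h^1(\mathcal{O}_C(r-2)) = 1$, and the vanishing $h^1(\mathscr{I}_C(n)) = 0$ for $n \ne$ the Rao degree handles the rest, producing the column for $\sigma_C$.

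Next I would extract the resolution \eqref{resolDr}. Knowing $M(C) = k$ (in the appropriate single degree) and the full Hilbert function, the shape of the minimal free resolution is determined: by Rao's correspondence and minimality, the "ghost" summands coming from the Rao module and the summands coming from the generators/relations of the saturated ideal are forced in each degree by the first differences of $\gamma_C$ and $\sigma_C$ (the standard recipe, e.g. in \cite{mdp2} or \cite{mig2}, reading Betti numbers off the characters). Concretely: $h^0(\mathscr{I}_C(r-1)) = 1$ gives one generator in degree $r-1$; $h^0(\mathscr{I}_C(r)) = r+1$ minus the $4$ multiples of the degree-$(r-1)$ surface by linear forms gives $r-3$ new generators in degree $r$; then the syzygy module is computed from the next difference of the characters, giving $4$ first syzygies in degree $r+2$, $r-6$ in degree $r+1$, and finally a single last syzygy $\mathcal{O}_{\PPP}(-(r+3))$, matching \eqref{resolDr}. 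I would verify consistency by checking that the alternating sum of the ranks and the twists reproduces the Hilbert polynomial $p_r(t)$ and that the resolution is self-consistent under the biliaison step of Proposition \ref{prop1}, i.e. that mapping-cone-plus-cancellation applied to the degree-$(r+1,1)$ biliaison transforms the resolution of $C \in \mathscr{D}_{r-1}$ into \eqref{resolDr} for $C' \in \mathscr{D}_r$.

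The main obstacle I expect is the last step: biliaison only determines the resolution up to adding and removing trivial summands, so I must argue that the resolution written is genuinely \emph{minimal}. For a generic curve this follows because the Betti numbers are bounded below by those forced by the Hilbert function and the Rao module (which we know exactly), and bounded above by what the mapping cone produces; showing these bounds coincide — i.e. that no further cancellation is possible and, in particular, that the single generator in degree $r-1$ and the $r-3$ generators in degree $r$ really are minimal generators with no hidden linear syzygies among them — is the delicate point, and is where genericity of $C$ in $\mathscr{D}_{r-1}$ (hence smoothness and the precise Rao module from Lemmas \ref{lemacoh}, \ref{lem:cohant}) must be used in an essential way.
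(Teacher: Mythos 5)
Your route to the characters $\gamma_C,\sigma_C$ (direct computation from Corollary \ref{sg}, Lemmas \ref{lemacoh} and \ref{lem:cohant}, Riemann--Roch and Serre duality) is legitimate, though different from the paper, which instead obtains the characters recursively from the biliaison formulas \cite{mdp2}*{III, Prop. 3.4 and 3.6} applied along the elementary biliaison $(r+1,1)$ of Proposition \ref{prop1}, starting from the table for $\ccc$ in Section \ref{sect6,3}. The genuine gap is in the resolution step. You assert that the graded Betti numbers are ``forced'' by the Hilbert function together with $M(C)\cong k$ in a single degree, and then defer exactly the point that matters. That assertion is not true in general: the postulation and speciality characters plus the Rao module give only lower bounds for the Betti numbers; curves with identical $\gamma$, $\sigma$ and Rao module may have strictly larger resolutions (consecutive-cancellation, or ghost, terms), and semicontinuity only says the generic member of an irreducible family attains the minimal Betti table, not what that table is. So ``reading off'' the summands $\mathcal{O}_{\PPP}(-(r+2))^{4}\oplus\mathcal{O}_{\PPP}(-(r+1))^{r-6}$ from the characters is not a proof, and your final paragraph concedes that the minimality/no-cancellation argument is missing rather than supplying it. Note also that the statement is for a curve $C$ in $\mathscr{D}_{r-1}$, whereas your argument retreats to a generic one.

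The paper sidesteps this issue entirely: it proves the resolution by induction on $r$, rewriting (\ref{resolDr}) as a resolution of type $E$ in the sense of \cite{mdp2}*{II, Def. 3.7}, and then invoking \cite{mdp2}*{II, Prop. 4.3}, which describes exactly how a type-$E$ resolution transforms under the elementary biliaison $(r+1,1)$ provided by Proposition \ref{prop1}; splicing the two resulting exact sequences produces precisely the claimed resolution for the curve in $\mathscr{D}_{r}$, with base case $r=3$ taken from Section \ref{sect6,3}. In other words, what you relegate to a ``consistency check'' (the mapping cone along the biliaison) is the actual engine of the paper's proof. To repair your proposal you should either promote that step to the proof itself, tracking the type-$E$ resolutions through the biliaison as the paper does, or else genuinely establish the matching upper and lower bounds on the Betti numbers that you currently only sketch.
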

\begin{proof}
    The first part follows recursively from the character of postulation and speciality of a curve in $\ccc$ and Proposition 3.4 and 3.6 from \cite{mdp2}*{III}.

    For the resolution we prove by induction. The base case ($r=3$) is given in the section \ref{sect6,3}. Suppose that $C_{r}\in{\mathscr{D}_{r-1}}$ has the resolution (\ref{resolDr}), therefore the resolution of type $E$ of $\ii_{C_{r}}$ (cf. \cite{mdp2}*{II,Def 3.7}) is given by:
    \begin{equation} \tag{$\triangle_{d}$} \label{resdr}
        0 \to E_{r} \to \mathcal{O}_{\PPP}(-r)^{r-3}\oplus \mathcal{O}_{\PPP}(-(r-1))\to \ii_{C_{r}} \to 0
    \end{equation}
    with $E_{r}=Ker\,\pi_{r}$ given by the exact sequence:
    $$ 0 \to \mathcal{O}_{\PPP}(-(r+3))\to \mathcal{O}_{\PPP}(-(r+2))^{4}\oplus \mathcal{O}_{\PPP}(-(r+1))^{r-6} \to E_{r}\to 0$$
  Let $C_{r+1}\in{\mathscr{D}_{r}}$, by \ref{prop1} this curve is obtained from a curve $C_{r}\in{\mathscr{D}_{r-1}}$ by elementary biliason (r+1,1), then by \cite{mdp2}*{II,Prop 4.3} the resolution of type $E$ of $C_{r+1}$ are given by:
 {\small \begin{equation*}
        0 \to (E_{r}\oplus \mathcal{O}_{\PPP}(-(r+1)))(-1) \to (\mathcal{O}_{\PPP}(-r)^{r-3}\oplus \mathcal{O}_{\PPP}(-(r-1)))(-1)\oplus \mathcal{O}_{\PPP}(-(r+1))\to \ii_{C_{r+1}} \to 0
    \end{equation*}}
    with $E_{r+1}=(E_{r}\oplus \mathcal{O}_{\PPP}(-(r+1)))(-1)$ defined by the exact sequence:
 {\small   $$ 0 \to \mathcal{O}_{\PPP}(-(r+3))(-1)\to (\mathcal{O}_{\PPP}(-(r+2))^{4}\oplus \mathcal{O}_{\PPP}(-(r+1))^{r-6}\oplus \mathcal{O}_{\PPP}(-(r+1)))(-1) \to E_{r+1}\to 0.$$}
    With the last two sequences we conclude that the resolution of $C_{r+1}$ is the expected resolution given by the statement.\\
\end{proof}

\begin{cor}
    For all $r\geq 3$. Any element of $\mathscr{D}_{r-1}$ are linked to an element of $\mathscr{M}^{\prime}_{r}$.
\end{cor}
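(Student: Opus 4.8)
The plan is to run the same linkage construction used in the proof of Proposition \ref{prop1}, but to keep track of the cohomological data that identifies the residual curve as an element of $\mathscr{M}^{\prime}_{r}$ rather than of $\mathscr{D}_{r}$. So, given $C\in{\mathscr{D}_{r-1}}\subseteq\hh_{r}$, first I would use Lemma \ref{lemidealACM} and Corollary \ref{corresolutionsdr} (or directly $h^{0}(\PPP,\ii_{C}(r))=r+1$) to pick two surfaces $X,Y$ of degree $r$ containing $C$ with no common component, with $X$ smooth by Bertini. Linking $C$ by $X\cap Y$ produces $C'\in{\hh_{r-1}}$; the key point is that this is exactly the setup in which Proposition \ref{star} applies, giving a formula for $h^{0}(\PPP,\ii_{C'}(m))$ and, via the duality $H^{1}(\PPP,\ii_{C'}(n))\cong H^{1}(\PPP,\ii_{C}(2r-4-n))^{\vee}$, control of the Rao module of $C'$.

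Next I would identify $C'$. By Corollary \ref{corresolutionsdr} the Rao function of $C$ is supported in a single degree with value $1$, so $M(C)$ has length one; by the liaison duality the same holds for $C'$, i.e. $\mathrm{lgth}\,M(C')=1$. It then remains to check the other defining condition of $\mathscr{M}_{r-1}$ (suitably indexed), namely $h^{0}(C',K_{C'}-(r-3)H)=1$, equivalently $h^{1}(C',\mathcal O_{C'}(r-3))=1$. This is precisely the computation carried out in the second half of the proof of Theorem \ref{teodes}: one writes the cohomology sequence of $0\to\ii_{C'}(r-3)\to\mathcal O_{\PPP}(r-3)\to\mathcal O_{C'}(r-3)\to0$, uses Riemann--Roch for $\chi\mathcal O_{C'}(r-3)$ together with the value of $h^{0}(\PPP,\ii_{C'}(r-3))$ coming from Proposition \ref{star} (and $h^{0}(\PPP,\ii_{C}(r-1))=1$), and reads off $h^{1}(C',\mathcal O_{C'}(r-3))=1$. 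Combined with $\mathrm{lgth}\,M(C')=1$, Theorem \ref{teodes} (the characterization of $\mathscr{M}^{\prime}$) then gives $C'\in{\mathscr{M}^{\prime}_{r-1}}$; adjusting the parity bookkeeping (Theorem \ref{teodes} treats $\mathscr{M}'$ as the component of $\mathscr{M}$ containing $\mathscr{L}^{\,\cdot}\aaa$ or $\mathscr{L}^{\,\cdot}\ccc$ according to parity), one sees $C'$ lies in the relevant $\mathscr{M}'$.

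Alternatively, and perhaps more cleanly, I would avoid re-deriving the cohomology by invoking Theorem \ref{teodes} directly: it says $\overline{\mathscr{D}_{r-1}}$ equals $\overline{\mathscr{L}^{r-3}\ccc}$ or $\overline{\mathscr{L}^{r-3}\aaa}$ (according to the parity of $r$), and by Definition \ref{deffam} every curve in $\mathscr{L}^{r-3}\ccc$ (resp.\ $\mathscr{L}^{r-3}\aaa$) is by construction linked by a complete intersection of two surfaces of degree $r$ to a curve in $\mathscr{L}^{r-4}\ccc$ (resp.\ $\mathscr{L}^{r-4}\aaa$), which by the other case of Theorem \ref{teodes} is an element of $\mathscr{M}^{\prime}_{r-1}$. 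Passing to the closure and noting that linkage by degree-$r$ complete intersections is a symmetric relation, the statement follows for a dense subset of $\mathscr{D}_{r-1}$; a semicontinuity argument on the vanishing loci $h^{0}(\ii(r-3))=0$, $\mathrm{lgth}\,M=1$ extends it to all of $\mathscr{D}_{r-1}$.

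\textbf{Main obstacle.} The delicate part is not any single computation but the consistent indexing: "$\mathscr{M}^{\prime}_{r}$" in the statement should be read as the component of $\mathscr{M}_{?}$ in the appropriate Hilbert scheme, and one must match parities so that the curve linked to $C\in{\mathscr{D}_{r-1}}$ genuinely lands in the component singled out in Theorem \ref{teodes} rather than merely in the larger, possibly reducible, locus $\mathscr{M}$. A secondary subtlety is ensuring the existence of the auxiliary degree-$r$ surface through $C$ with no common component with the minimal one and with smoothness available, but this is handled exactly as in Proposition \ref{prop1} and Theorem \ref{teodes} via $h^{0}(\PPP,\ii_{C}(r))=r+1>1$ and Bertini.
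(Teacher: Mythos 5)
Your construction links $C\in\mathscr{D}_{r-1}\subseteq\hh_{r}$ by a complete intersection of \emph{two surfaces of degree $r$}, so the residual curve has degree $r^{2}-d_{r}=d_{r-1}$ and lives in $\hh_{r-1}$. Such a curve can never be an element of $\mathscr{M}^{\prime}_{r}$, which by definition sits inside $\mathscr{M}_{r}\subseteq\hh_{r}$ and consists of curves of degree $d_{r}$. You noticed this tension and resolved it by re-reading the statement as ``linked to an element of $\mathscr{M}^{\prime}_{r-1}$'', but that is not the statement the paper proves or uses: the point of this corollary (together with the one that follows it) is that the two divisorial families $\mathscr{D}_{r-1}$ and $\mathscr{M}^{\prime}_{r}$ \emph{inside the same Hilbert scheme} $\hh_{r}$ are directly linked, which is what yields the summary that $\mathscr{L}^{r-3}\aaa$ and $\mathscr{L}^{r-3}\ccc$ are directly linked for all $r$. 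The shifted statement you prove (an element of $\mathscr{D}_{r-1}$ is linked down to an element of $\mathscr{M}^{\prime}_{r-1}$) is essentially Definition \ref{deffam} combined with Theorem \ref{teodes} and does not give this. A secondary weakness, even within your reading: $\mathscr{M}_{r-1}$ is defined as the closure of a locus of \emph{smooth} curves, and Theorem \ref{teodes} identifies only its generic element; so for a possibly singular residual curve, ``$\mathrm{lgth}\,M=1$ and $h^{0}(K-(r-3)H)=1$'' does not by itself place it in $\mathscr{M}^{\prime}$, and your semicontinuity remark would need to be made precise.

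What the paper actually does is reuse the elementary biliaison of Proposition \ref{prop1}: since $h^{0}(\PPP,\ii_{C}(r))=r+1$, choose a surface $Q$ of degree $r+1$ and a surface $S$ of degree $r$ through $C$ with no common component; then $Q\cap S=C\cup C_{1}$ with $C_{1}\in\hh_{r}$ (the residual degree is $r(r+1)-d_{r}=d_{r}$), so $C$ is directly linked to $C_{1}$ by a mixed $(r,r+1)$ complete intersection. Next, $C_{1}$ is linked by two surfaces of degree $r+1$ to the curve $C^{\prime}\in\mathscr{D}_{r}\subseteq\hh_{r+1}$ produced in Proposition \ref{prop1}, and Theorem \ref{teodes} applied at level $r+1$ (with the opposite parity) identifies $\mathscr{D}_{r}$ with the closure of $\mathscr{L}^{(r+1)-3}\ccc$ or $\mathscr{L}^{(r+1)-3}\aaa$, i.e.\ with the family of curves linked by degree-$(r+1)$ complete intersections to $\mathscr{M}^{\prime}_{r}$; this is what forces $C_{1}\in\mathscr{M}^{\prime}_{r}$ and proves the corollary. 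So the missing idea in your proposal is precisely this mixed-degree link $(r,r+1)$ coming from the biliaison, rather than the symmetric $(r,r)$ link you use.
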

\begin{proof}
    With the notation of the proof of the Proposition \ref{prop1}. The curve $C$ is linked to the curve $C_{1}$, and $C_{1}$ is linked by the complete intersection of two surfaces of degree $r+1$ to $C^{\prime}$ that is an element of $\mathscr{D}_{r}$ and by \ref{teodes} this family is equal to $\mathscr{L}\mathscr{M}^{\prime}_{r}$, thus $C_{1}\in{\mathscr{M}^{\prime}_{r}}$.\\
\end{proof}

\begin{prop} \label{prop2}
    For all $r\geq 3$. Given a curve $C$ in $\mathscr{M}^{\prime}_{r}\subseteq \hh_{r}$, then there exists a curve $C^{\prime}$ in $\mathscr{M}^{\prime}_{r+1}\subseteq \hh_{r+1}$ that is obtained from $C$ by an elementary biliason $(r+1,1)$.
\end{prop}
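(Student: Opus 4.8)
The plan is to imitate the proof of Proposition \ref{prop1}, with the family $\mathscr{D}$ replaced by $\mathscr{M}^{\prime}$ and with Corollaries \ref{sg}, \ref{corant} and Lemmas \ref{lemacoh}, \ref{lem:cohant} playing the role that Corollary \ref{corresolutionsdr} played there. Let $C\in\mathscr{M}^{\prime}_{r}\subseteq\hh_{r}$. By Theorem \ref{teodes} we have $\mathscr{M}^{\prime}_{r}=\overline{\mathscr{L}^{r-3}\aaa}$ if $r$ is odd and $\mathscr{M}^{\prime}_{r}=\overline{\mathscr{L}^{r-3}\ccc}$ if $r$ is even, so for a general such $C$ Corollaries \ref{sg}, \ref{corant} together with the monotonicity of $n\mapsto h^{0}(\PPP,\mathscr{I}_{C}(n))$ give $h^{0}(\PPP,\mathscr{I}_{C}(r))=r+1$ and $h^{0}(\PPP,\mathscr{I}_{C}(r-2))=0$, while Lemmas \ref{lemacoh}, \ref{lem:cohant} give $h^{1}(\PPP,\mathscr{I}_{C}(n))=0$ for $n\neq r-2$ and $h^{1}(\PPP,\mathscr{I}_{C}(r-2))=1$. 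In particular $C$ lies on a surface $S$ of degree $r$ (which by Bertini we may take smooth) and on a surface $Q$ of degree $r+1$ with no common component with $S$. As in Proposition \ref{prop1}, link $C$ to a curve $C_{1}$ by the complete intersection $S\cap Q=C\cup C_{1}$ and then link $C_{1}$ to $C^{\prime}$ by a complete intersection $Q\cap S^{\prime}=C_{1}\cup C^{\prime}$ with $S^{\prime}$ a general surface of degree $r+1$ through $C_{1}$. The liaison formulas for degree and genus give $C_{1}\in\hh_{r}$ and $C^{\prime}\in\hh_{r+1}$, and by construction $C^{\prime}$ is obtained from $C$ by an elementary biliaison $(r+1,1)$, realised on the surface $Q$.

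The next step is to verify that $C^{\prime}\in\mathscr{M}_{r+1}$. Since an elementary biliaison changes the Rao module only by a twist (\cite{mdp2}*{II}), $M(C^{\prime})$ has length $1$ and is concentrated in degree $(r-2)+1=(r+1)-2$. Applying \cite{mdp2}*{III,3.4} exactly as in the proof of Proposition \ref{prop1} yields $h^{0}(\PPP,\mathscr{I}_{C^{\prime}}(r-1))=h^{0}(\PPP,\mathscr{I}_{C}(r-2))=0$. Plugging this and $h^{1}(\PPP,\mathscr{I}_{C^{\prime}}(r-1))=1$ into the cohomology of $0\to\mathscr{I}_{C^{\prime}}(r-1)\to\mathcal{O}_{\PPP}(r-1)\to\mathcal{O}_{C^{\prime}}(r-1)\to 0$, together with $H^{1}(\PPP,\mathcal{O}_{\PPP}(r-1))=0$, gives $h^{0}(C^{\prime},\mathcal{O}_{C^{\prime}}(r-1))=\binom{r+2}{3}+1$; since Riemann--Roch gives $\chi(\mathcal{O}_{C^{\prime}}(r-1))=(r-1)d_{r+1}+1-g_{r+1}=\binom{r+2}{3}$, we obtain $h^{1}(C^{\prime},\mathcal{O}_{C^{\prime}}(r-1))=1$, that is $h^{0}(C^{\prime},K_{C^{\prime}}-((r+1)-2)H)=1$ by Serre duality. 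Hence $C^{\prime}\in\mathscr{M}_{r+1}$.

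It remains to show that $C^{\prime}$ lies in the component $\mathscr{M}^{\prime}_{r+1}$. Here I would use that the intermediate curve $C_{1}$ has degree $d_{r}$, genus $g_{r}$, Rao module of length one in degree $r-1$, and (by Proposition \ref{star}, in its version for a complete intersection of surfaces of degrees $r$ and $r+1$) satisfies $h^{0}(\PPP,\mathscr{I}_{C_{1}}(r-1))=1$; hence $C_{1}\in\mathscr{D}_{r-1}$. By Theorem \ref{teodes} and Definition \ref{deffam}, $\mathscr{D}_{r-1}$ is the closure of the relevant family $\mathscr{L}^{r-3}(\cdot)$, and $\mathscr{M}^{\prime}_{r+1}$ is the closure of the family obtained from it by linking through complete intersections of two surfaces of degree $r+1$; since $C^{\prime}$ is exactly such a link of $C_{1}$, we conclude $C^{\prime}\in\mathscr{M}^{\prime}_{r+1}$. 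Equivalently, the construction above produces an irreducible correspondence over the irreducible variety $\mathscr{M}^{\prime}_{r}$ whose image in $\hh_{r+1}$ is irreducible, is contained in $\mathscr{M}_{r+1}$, and meets the dense subfamily $\mathscr{L}^{(r+1)-3}(\cdot)\subseteq\mathscr{M}^{\prime}_{r+1}$, hence is contained in $\mathscr{M}^{\prime}_{r+1}$. I expect the main obstacle to be precisely this last point --- identifying in which irreducible component of the reducible auxiliary locus $\mathscr{M}_{r+1}$ the biliaison lands --- together with the related issue of upgrading the statement from a general $C\in\mathscr{M}^{\prime}_{r}$ to an arbitrary one, which, as in Proposition \ref{prop1}, should follow from the irreducibility of $\mathscr{M}^{\prime}_{r}$ and semicontinuity of the $h^{0}$'s involved.
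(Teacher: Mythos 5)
Your proposal is correct and follows essentially the same route as the paper: perform the biliaison construction of Proposition \ref{prop1} and verify the defining conditions of $\mathscr{M}_{r+1}$ for $C^{\prime}$ via the biliaison comparison results of \cite{mdp2}*{III}, the only real difference being that the paper invokes \cite{mdp2}*{III 3.5} to transfer the speciality directly, giving $h^{0}(C^{\prime},K_{C^{\prime}}-(r-1)H)=h^{2}(\PPP,\mathscr{I}_{C^{\prime}}(r-1))=h^{2}(\PPP,\mathscr{I}_{C}(r-2))=1$, whereas you recover the same number from \cite{mdp2}*{III 3.4} together with the structure sequence, Riemann--Roch and Serre duality. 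Your closing discussion of why $C^{\prime}$ lands in the component $\mathscr{M}^{\prime}_{r+1}$ rather than merely in $\mathscr{M}_{r+1}$ is more explicit than the paper, which settles this point by noting that $C^{\prime}$ remains in the liaison class with Rao module of length one and appealing to the definition.
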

\begin{proof}
    We have the same construction of the proof of the Proposition \ref{prop1}. But in this case we use the Proposition \cite{mdp2}*{III 3.5} and obtain:
  \begin{align*}
   h^{0}(C^{\prime},K_{C^{\prime}}-((r+1)-2)H^{\prime})&=h^{2}(\PPP,\mathscr{I}_{C^{\prime}}(r-1))\\
   &=h^{2}(\PPP,\mathscr{I}_{C}((r-1)-1))+ \binom{(r+1)-(r-1)-1}{2}\\
   & \quad- \binom{-(r-1)-1}{2} \\
   &=h^{2}(\PPP,\mathscr{I}_{C}((r-1)-1))=h^{0}(C, K_{C}-(r-2)H)
   \end{align*}
  but $C\in{\mathscr{M}^{\prime}_{r}}$, then $h^{0}(C, K_{C}-(r-2)H)=1$ and since both elements are in the same linked class and the Rao module of $C$ has length one, then the Rao module of $C^{\prime}$ has length one, thus by definition $C^{\prime}$ is an element of $\mathscr{M}^{\prime}_{r+1}$.\\
\end{proof}

\begin{cor}
For $r\geq 3$, let $C$ a curve in $\mathscr{M}_{r}$ then its character of postulation ($\gamma_{C}$) and speciality ($\sigma_{C}$) are given by:
              \begin{center}
\begin{tabular}{ c| c c }
$n$ &  $\gamma_{C}$ & $\sigma_{C}$ \\ \hline 
0 &-1 &1 \\
1 & -1 & 1  \\
$\vdots $& $\vdots$ & $\vdots$  \\
r-3 & -1 & -1 \\
r-2 &-1 & 2   \\
r-1 &-1 & -2\\
r & r& -(r-3) \\
r+1 &0 &-1 
\end{tabular}
\end{center} 
and the minimal free resolution of $C$ is:
{\small  \begin{equation} \tag{$\# \#$} \label{resolmr}
    \xymatrix{ 0\to \mathcal{O}_{\PPP}(-(r+2)) \to \mathcal{O}_{\PPP}(-(r+2))\oplus \mathcal{O}_{\PPP}(-(r+1))^{r}\to \mathcal{O}_{\PPP}(-r)^{r+1}\ar[r]^{\qquad \qquad \qquad \qquad \qquad \qquad \qquad \pi_{r}}&  \ii_{C} \to 0 }\end{equation}}
\end{cor}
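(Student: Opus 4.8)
The plan is to establish both assertions together by induction on $r$, running the same liaison argument used for Corollary~\ref{corresolutionsdr}. Throughout, $C$ denotes a general element of the component $\mathscr{M}^{\prime}_{r}\subseteq\mathscr{M}_{r}$, which by Theorem~\ref{teodes} equals $\overline{\mathscr{L}^{r-3}\aaa}$ when $r$ is odd and $\overline{\mathscr{L}^{r-3}\ccc}$ when $r$ is even. For the base case $r=3$ the postulation and speciality characters and the minimal free resolution of a general member of $\aaa$ (resp.\ $\ccc$) are recorded in Section~\ref{sect6,3}; the stated table and the resolution $(\ref{resolmr})$ are read off from these, the only caveat being that --- exactly as with $(\ref{resolDr})$ in Corollary~\ref{corresolutionsdr} --- the small values of $r$ must be interpreted up to the cancellations of repeated summands forced by that base case (so that, e.g., $(\ref{resolmr})$ at $r=3$ records the resolution of a general \emph{smooth} member of $\overline{\aaa}$).

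For the inductive step, Proposition~\ref{prop2} produces, from a general $C_{r}\in\mathscr{M}^{\prime}_{r}$, a general curve $C_{r+1}\in\mathscr{M}^{\prime}_{r+1}$ obtained by an elementary biliaison of degree $r+1$ and height $1$ (and, by the recursive linkage description of these families, a general element of $\mathscr{M}^{\prime}_{r+1}$ arises this way). The characters of $C_{r+1}$ follow from those of $C_{r}$ via the transformation rules for the postulation and speciality characters under an elementary biliaison, i.e.\ the formulas \cite{mdp2}*{III, Prop.\ 3.4, 3.5, 3.6} already used in Propositions~\ref{prop1} and~\ref{prop2}: substituting the $r$-th table into these rules gives the $(r+1)$-st table. (Alternatively, the characters can be computed directly from the cohomology of $\ii_{C}$ provided by Corollary~\ref{corant} (resp.\ Corollary~\ref{sg}) and Lemma~\ref{lem:cohant} (resp.\ Lemma~\ref{lemacoh}), together with Riemann--Roch.) For the resolution, I would recast $(\ref{resolmr})$ as a resolution of type $E$ in the sense of \cite{mdp2}*{II, Def.\ 3.7}, with middle term $\mathcal{O}_{\PPP}(-r)^{r+1}$ and kernel module $E_{r}$ defined by $0\to\mathcal{O}_{\PPP}(-(r+2))\to\mathcal{O}_{\PPP}(-(r+2))\oplus\mathcal{O}_{\PPP}(-(r+1))^{r}\to E_{r}\to 0$, then apply \cite{mdp2}*{II, Prop.\ 4.3} to the biliaison: the new type-$E$ resolution has middle term $\bigl(\mathcal{O}_{\PPP}(-r)^{r+1}\bigr)(-1)\oplus\mathcal{O}_{\PPP}(-(r+1))$ and kernel module $\bigl(E_{r}\oplus\mathcal{O}_{\PPP}(-(r+1))\bigr)(-1)$. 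Combining the repeated $\mathcal{O}_{\PPP}(-(r+1))$ summands then turns this into $(\ref{resolmr})$ with $r$ replaced by $r+1$, consistent with the $(r+1)$-st character table.

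Every step here is dictated by Propositions~\ref{prop1}, \ref{prop2} and the cited results of \cite{mdp2}, so I do not anticipate a serious obstacle; the one point needing care is the bookkeeping that certifies minimality of the displayed resolution --- namely, that the output of \cite{mdp2}*{II, Prop.\ 4.3} admits no further cancellation after the twist, and that at small $r$ the superfluous generator--syzygy pair appearing for the generic reducible curve really does cancel for a general smooth one. This is handled by semicontinuity of the graded Betti numbers together with the explicit summand count, just as for $\mathscr{D}_{r-1}$ in Corollary~\ref{corresolutionsdr}.
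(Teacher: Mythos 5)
Your proposal follows essentially the same route as the paper: induction on $r$ with the $r=3$ data from Section \ref{sect6,3} as base case, the characters propagated through the elementary biliaison of Proposition \ref{prop2} via \cite{mdp2}*{III, Prop. 3.4--3.6}, and the resolution propagated by rewriting (\ref{resolmr}) as a type-$E$ resolution and applying \cite{mdp2}*{II, Prop. 4.3}. Your extra remarks on minimality and on the cancellation needed at small $r$ are caveats the paper leaves implicit, but they do not change the argument.
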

\begin{proof}
    The proof is analogous to corollary \ref{corresolutionsdr}, thus we only write the resolutions of type $E$ that correspondent: if $C_{r}\in{\mathscr{M}_{r}}$ has the resolution (\ref{resolmr}), therefore the resolution of type $E$ of $\ii_{C_{r}}$ (cf. \cite{mdp2}*{II,Def 3.7}) is given by:
    \begin{equation} \tag{$\triangle_{m}$} \label{resmr}
        0 \to E_{r} \to \mathcal{O}_{\PPP}(-r)^{r+1}\to \ii_{C_{r}} \to 0
    \end{equation}
    with $E_{r}=Ker\,\pi_{r}$ given by the exact sequence:
    $$ 0 \to \mathcal{O}_{\PPP}(-(r+2))\to \mathcal{O}_{\PPP}(-(r+2))\oplus \mathcal{O}_{\PPP}(-(r+1))^{r} \to E_{r}\to 0$$
  Let $C_{r+1}\in{\mathscr{M}_{r+1}}$, by \ref{prop2} this curve is obtained from a curve $C_{r}\in{\mathscr{M}_{r-1}}$ by elementary biliason (r+1,1), then by \cite{mdp2}*{II,Prop 4.3} the resolution of type $E$ of $C_{r+1}$ are given by:
 {\small \begin{equation*}
        0 \to (E_{r}\oplus \mathcal{O}_{\PPP}(-(r+1)))(-1) \to (\mathcal{O}_{\PPP}(-r)^{r+1})(-1)\oplus \mathcal{O}_{\PPP}(-(r+1))\to \ii_{C_{r+1}} \to 0
    \end{equation*}}
    with $E_{r+1}=(E_{r}\oplus \mathcal{O}_{\PPP}(-(r+1)))(-1)$ defined by the exact sequence:
 {\small   $$ 0 \to \mathcal{O}_{\PPP}(-(r+2))(-1)\to (\mathcal{O}_{\PPP}(-(r+2))\oplus \mathcal{O}_{\PPP}(-(r+1))^{r}\oplus \mathcal{O}_{\PPP}(-(r+1)))(-1) \to E_{r+1}\to 0.$$}
    With the last two sequences we conclude that the resolution of $C_{r+1}$ is the expected resolution given by the statement.\\
\end{proof}

\begin{cor}
   For all $r\geq 3$. Any element of $\mathscr{M}^{\prime}_{r}$ are linked to an element of $\mathscr{D}_{r-1}$. 
\end{cor}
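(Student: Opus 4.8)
This is the ``reverse'' of the preceding Corollary, and the plan is to run the same pair of consecutive links, this time starting from an element of $\mathscr{M}'_r$, and then read off the intermediate curve.

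First I would take $C \in \mathscr{M}'_r \subseteq \hh_r$ and run the construction from the proof of Proposition \ref{prop2} (which is the construction of Proposition \ref{prop1}): choose a surface $Q$ of degree $r+1$ and a surface $S$ of degree $r$, without common component, both containing $C$, and let $C_1$ be residual to $C$ in $Q\cap S$, so that $Q\cap S = C\cup C_1$ and $C_1 \in \hh_r$ is directly linked to $C$; then choose a surface $S'$ of degree $r+1$ containing $C_1$ and without common component with $S$, and let $C'$ be residual to $C_1$ in $S\cap S'$, so that $S\cap S' = C_1\cup C'$ and $C'$ is directly linked to $C_1$ by two surfaces of degree $r+1$. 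Proposition \ref{prop2} says precisely that $C' \in \mathscr{M}'_{r+1}$.

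The second step is to identify $\mathscr{M}'_{r+1}$ with a family linked to $\mathscr{D}_{r-1}$. By Definition \ref{deffam}, the family $\mathscr{L}^{(r+1)-3}\ccc$ (resp.\ $\mathscr{L}^{(r+1)-3}\aaa$) is obtained from $\mathscr{L}^{r-3}\ccc$ (resp.\ $\mathscr{L}^{r-3}\aaa$) by linking along a complete intersection of two surfaces of degree $r+1$; since $r$ and $r+1$ have opposite parity, Theorem \ref{teodes} applied in degrees $r$ and $r+1$ identifies $\mathscr{D}_{r-1}$ with one of these families and $\mathscr{M}'_{r+1}$ with the one linked to it, so that $\mathscr{M}'_{r+1} = \mathscr{L}_{r+1}\mathscr{D}_{r-1}$. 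Since linkage along a complete intersection is symmetric and a double link by two complete intersections of equal degree returns to the same constant-cohomology family (the isomorphism of flag schemes recorded in \cite{mdp2}*{VII}), the curve $C_1$, being directly linked by surfaces of degree $r+1$ to $C' \in \mathscr{M}'_{r+1} = \mathscr{L}_{r+1}\mathscr{D}_{r-1}$, must lie in $\mathscr{D}_{r-1}$. As $C$ is directly linked to $C_1$, the Corollary follows.

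The step I expect to require the most care is this last implication, from ``$C_1$ is directly linked to an element of $\mathscr{M}'_{r+1}$'' to ``$C_1 \in \mathscr{D}_{r-1}$'': it genuinely uses the equality (not merely an inclusion) $\mathscr{M}'_{r+1} = \mathscr{L}_{r+1}\mathscr{D}_{r-1}$ coming from Theorem \ref{teodes}, exactly as in the proof of the preceding Corollary. Everything else — producing the two pairs of surfaces without common components, and checking that $C_1 \in \hh_r$ and $C' \in \hh_{r+1}$ via the degree and genus formulas for linked curves — has already been carried out in Propositions \ref{prop1} and \ref{prop2}, so no new computation is needed.
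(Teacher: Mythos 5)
Your argument is correct and is essentially the paper's intended one: the paper states this corollary without a separate proof, as the mirror image of the preceding corollary, whose proof runs exactly your two-step construction (link $C$ to $C_1$ via the $(r,r+1)$ complete intersection from Propositions \ref{prop1}--\ref{prop2}, identify the second residual $C'$ as an element of $\mathscr{M}^{\prime}_{r+1}=\overline{\mathscr{L}_{r+1}\mathscr{D}_{r-1}}$ via Theorem \ref{teodes} and Definition \ref{deffam}, and conclude $C_1\in\mathscr{D}_{r-1}$). Your closing remark correctly isolates the one step that carries the weight, namely the equality of $\mathscr{M}^{\prime}_{r+1}$ with the linked family of $\mathscr{D}_{r-1}$, which is precisely how the paper argues in the $\mathscr{D}_{r-1}$ case.
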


In summary we have that the families $\mathscr{L}^{r-3}\mathscr{A}$ and $\mathscr{L}^{r-3}\ccc$ are directly linked for all $r\geq 3$.

Now we can prove the Theorem B:

\begin{thm}[Theorem B] \label{corP} For all $r\geq 3$, the families  $\mathscr{L}^{r-3}\aaa$ and $\mathscr{L}^{r-3}\ccc$ are contained in the closure  $\overline{\cc_{r}}$.
\end{thm}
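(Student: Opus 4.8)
The plan is to induct on $r$. The base case $r=3$ is exactly what Section \ref{sect6,3} provides: $\mathscr{L}^{0}\ccc=\ccc$ lies in $\overline{\cc_{3}}$ and $\mathscr{L}^{0}\aaa=\aaa$ lies in $\overline{\cc_{3}}$ (the latter because its curves are smoothable). So fix $r\geq 4$ and assume $\mathscr{L}^{(r-1)-3}\ccc$ and $\mathscr{L}^{(r-1)-3}\aaa$ lie in $\overline{\cc_{r-1}}$. I will carry out the step for $\ccc$; the one for $\aaa$ is word for word the same with Corollary \ref{corant} in place of Corollary \ref{sg}. By Proposition \ref{dimch} the family $\mathscr{L}^{r-3}\ccc$ is irreducible, so it suffices to place a general member $C$ in $\overline{\cc_{r}}$. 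By Definition \ref{deffam} such a $C$ is the residual curve $\mathrm{Res}(D,X\cap X')$ of a general $D\in\mathscr{L}^{(r-1)-3}\ccc$ in a complete intersection $X\cap X'$ of two surfaces of degree $r$ with no common component. Since $D\in\overline{\cc_{r-1}}$ by the inductive hypothesis, the idea is to slide the pair $(X,X')$ along curves of $\cc_{r-1}$ approximating $D$, take residuals, and obtain ACM curves of $\hh_{r}$ degenerating to $C$.

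To make this work I would compare two numbers. On one side, Corollary \ref{sg}(d) applied to $\mathscr{L}^{(r-1)-3}\ccc$ (i.e.\ with $r-1$ in place of $r$ and $a=1$) gives $h^{0}(\PP^{3},\ii_{D}(r))=3(r-1)+4=3r+1$. On the other side, the resolution (\ref{resacm}) of an ACM curve $E\in\cc_{r-1}$ yields $h^{0}(\PP^{3},\ii_{E}(r))=4r-(r-1)=3r+1$ as well, and this value is constant on $\cc_{r-1}$. Now let $\mathcal{W}$ be the closure in $\overline{\cc_{r-1}}\times G(2,\binom{r+3}{3})$ of the locus of triples $(E,S_{1},S_{2})$ with $E\in\cc_{r-1}$ and $S_{1},S_{2}$ surfaces of degree $r$ through $E$ without common component; it is irreducible, projective, and the first projection $p\colon\mathcal{W}\to\overline{\cc_{r-1}}$ is surjective with general fibre an open subset of $G(2,H^{0}(\PP^{3},\ii_{E}(r)))=G(2,3r+1)$, of dimension $6r-2$. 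Hence $\dim p^{-1}(D)\geq\dim\mathcal{W}-\dim\overline{\cc_{r-1}}=6r-2$, while $p^{-1}(D)$ is contained in $\{(S_{1},S_{2}):D\subset S_{1}\cap S_{2}\}=G(2,H^{0}(\PP^{3},\ii_{D}(r)))=G(2,3r+1)$, which is irreducible of the \emph{same} dimension $6r-2$. Therefore $p^{-1}(D)$ equals it, so in particular the specific triple $(D,X,X')$ lies on $\mathcal{W}$; being a point of the irreducible $\mathcal{W}$ whose general point is an ACM triple, it is a specialization of triples $(E,S_{1},S_{2})$ with $E\in\cc_{r-1}$.

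Finally, passing to residual curves is a morphism on the dense open of $\mathcal{W}$ where the complete intersections are genuine curves; restricting it to a general curve of $\mathcal{W}$ through $(D,X,X')$ (which lies in that open, as $X$ and $X'$ have no common component) yields a flat family of curves of degree $d_{r}$ and genus $g_{r}$ in $\hh_{r}$ whose general member $\mathrm{Res}(E,S_{1}\cap S_{2})$ is ACM, because linkage of an ACM curve by a complete intersection is ACM, hence lies in $\cc_{r}$ by Theorem \ref{acm} and Lemma \ref{lemidealACM}, while its special member is $\mathrm{Res}(D,X\cap X')=C$. This gives $C\in\overline{\cc_{r}}$, and since $C$ was general and $\mathscr{L}^{r-3}\ccc$ is irreducible, $\overline{\mathscr{L}^{r-3}\ccc}\subseteq\overline{\cc_{r}}$; the same runs verbatim for $\aaa$, closing the induction.

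I expect the delicate point to be the dimension count forcing $(D,X,X')\in\mathcal{W}$: it uses in an essential way that a general curve of $\mathscr{L}^{(r-1)-3}\ccc$ (resp.\ $\mathscr{L}^{(r-1)-3}\aaa$) lies on \emph{exactly} as many surfaces of degree $r$ as a general curve of $\cc_{r-1}$, i.e.\ the equality $h^{0}(\PP^{3},\ii_{D}(r))=h^{0}(\PP^{3},\ii_{E}(r))=3r+1$ extracted from Corollaries \ref{sg}, \ref{corant} and from (\ref{resacm}); any surplus of degree-$r$ surfaces through $D$ would let $\mathcal{W}$ miss the relevant triples. A routine secondary point is the flatness of the residual construction over the base, which is automatic since $X\cap X'$ has no common component. (Alternatively one may try to deform the type-$E$ resolutions of Corollary \ref{corresolutionsdr} and of its $\mathscr{M}_{r}$ analogue towards the split resolution $0\to\mathcal{O}_{\PP^{3}}(-(r+1))^{r}\to\mathcal{O}_{\PP^{3}}(-r)^{r+1}\to\ii\to 0$, but the uniform liaison-in-families argument above seems the most economical.)
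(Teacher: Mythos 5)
Your route is genuinely different from the paper's. The paper never argues by degenerating the linking data: it first identifies $\overline{\mathscr{L}^{r-3}\ccc}$ and $\overline{\mathscr{L}^{r-3}\aaa}$ with the families $\mathscr{D}_{r-1}$ and $\mathscr{M}_{r}$ according to parity (Theorem \ref{teodes}), computes their postulation/speciality characters and type-$E$ resolutions by elementary biliaison (Propositions \ref{prop1}, \ref{prop2} and the corollaries), and then invokes the Martin-Deschamps--Perrin specialization criteria (\cite{mdp2}, X, Prop.\ 4.1 and 4.3) inside the schemes $H_{\gamma_r}$, $H_{\sigma_r}$ of constant postulation/speciality to place these curves in $\overline{\cc_{r}}$. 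Your induction via the incidence variety $\mathcal{W}$ is attractive and its dimension count is correct: $h^{0}(\PPP,\ii_{E}(r))=3r+1$ for every $E\in\cc_{r-1}$ (so $\mathcal{W}$ is a Grassmannian bundle over $\cc_{r-1}$, irreducible of the right dimension), $h^{0}(\PPP,\ii_{D}(r))=3r+1$ for $D$ general in $\mathscr{L}^{(r-1)-3}\ccc$ by Corollary \ref{sg}(d), the incidence condition $V\subseteq H^{0}(\PPP,\ii_{E}(r))$ is closed, and lower semicontinuity of fiber dimension then forces $p^{-1}(D)$ to be the whole Grassmannian, so $(D,X,X')\in\mathcal{W}$. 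The genericity bookkeeping (a general $C$ is linked to a general $D$, via the irreducibility of the incidence $W^{h}_{r-1}$ of Proposition \ref{dimch}) also goes through.

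The gap is in the step you call routine. That the residual construction yields a flat family over a curve in $\mathcal{W}$ through $(D,X,X')$ whose special fiber is exactly $C$ is not ``automatic since $X\cap X'$ has no common component'': in general the flat limit of linked curves need not be the link of the flat limit, and this is precisely where such arguments break. What makes it true here is that both relevant fibers of the degenerating family of subcurves are locally Cohen--Macaulay (the generic fiber lies in $\cc_{r-1}$ and the special fiber is $D$), so that $\mathcal{E}xt^{1}_{\mathcal{O}_{Z_t}}(\mathcal{O}_{E_t},\mathcal{O}_{Z_t})=0$ on every fiber of the family of complete intersections and the relative $\mathcal{H}om_{\mathcal{O}_{\mathcal{Z}}}(\mathcal{O}_{\mathcal{E}},\mathcal{O}_{\mathcal{Z}})$ is flat and commutes with base change; equivalently one must cite a genuine ``liaison in flat families'' statement. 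Note that the constant-cohomology flag-scheme isomorphism $\mathcal{D}_{\gamma,M,s,s}\cong\mathcal{D}_{\gamma',M',s,s}$ quoted in the paper's remark does not apply directly, since the cohomology of $\ii_{E_t}(r)$ jumps at the special fiber. So your proof is salvageable, but you must supply (or cite) this base-change/flatness result, with the local Cohen--Macaulayness of $D$ as the operative hypothesis; also, the membership of the generic residual in $\cc_{r}$ is cleanest by quoting $\cc_{r}=\mathscr{L}_{r}\cc_{r-1}$ from Remark \ref{incidence} rather than only Theorem \ref{acm}. By contrast, the dimension count you single out as the delicate point is in fact the solid part of your argument.
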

\begin{proof}
  By the Theorem \ref{teodes}, is enough to prove that for all $r\geq 3$, the families $\mathscr{D}_{r-1}$ and $\mathscr{M}_{r}$ are contained in $\overline{\cc_{r}}$. First, we can compute the character of postulation ($\gamma_{r}$) and speciality ($\sigma_{r}=-\gamma_{r}$) of a curve on $\cc_{r}$:
 \begin{center}
\begin{tabular}{ c| c c }
$n$ &  $\gamma_{r}$ & $\sigma_{r}$ \\ \hline 
0 &-1 &1 \\
1 & -1 & 1  \\
$\vdots $& $\vdots$ & $\vdots$  \\
r-1 &-1 & 1\\
r & r& -r \\
\end{tabular}
\end{center} 
Since the character of postulation of a curve in $\mathscr{M}_{r}$ is equal to $\gamma_{r}$, we have that the families $\mathscr{M}_{r}$ and $\cc_{r}$ are contained in the Hilbert scheme $H_{\gamma_{r}}$ (cf. \cite{mdp2}*{VI,Prop 3.9}). Moreover, since the curves $C$ in the family $\mathscr{M}_{r}$ is in the liaison class of the disjoint union of two lines, there exists an integer $l$ such that $\rho_{C}(l)=1$ and is 0 in other cases (with $\rho_{C}$ the Rao function of $C$), in fact in this case, $l=r-2$. Following the notation of \cite{mdp2}*{X} we have from the resolutions \ref{resmr} that $b(l+4)=b(r+2)=1\not= 0$, and by \cite{mdp2}*{X,Prop 4.1} the curves in $\mathscr{M}_{r}$ are contained in the Closure of $\cc_{r}$.

On the other hand, the character of speciality of a curve in $\mathscr{D}_{r-1}$ is equal to $\sigma_{r}$, thus we have that the families $\mathscr{D}_{r-1}$ and $\cc_{r}$ are contained in the Hilbert scheme $H_{\sigma_{r}}$. In this case $l=r-1$ and by the resolution of type $E$ \ref{resdr} and \cite{mdp2}*{X,Prop 1.2} we have that $b^{\prime}(l)=b^{\prime}(r-1)=c(r-1)=1\not= 0$ and by \cite{mdp2}*X,Prop 4.3 we conclude.\\
\end{proof}

\begin{bibdiv}
\begin{biblist}
\bib{am1}{article}{
Author = {Amrane, Samir Ait},
 Title = {Sur le sch{\'e}ma de {Hilbert} des courbes gauches de degr{\'e} {{\(d\)}} et genre {{\(g=(d-3)(d-4)/2\)}}. (On the Hilbert scheme of space curves of degree {{\(d\)}} and genus {{\(g=(d-3)(d-4)/2\)}})},
 Journal = {Ann. Inst. Fourier},
 ISSN = {0373-0956},
 Volume = {50},
 Number = {6},
 Pages = {1671--1708},
 Year = {2000},
 Language = {French},
 DOI = {10.5802/aif.1804},
}

\bib{am}{article}{
Author = {Amrane, Samir Ait},
 Title = {Sur le sch{\'e}ma de {Hilbert} des courbes gauches de degr{\'e} {{\(d\)}} et genre {{\(g=(d-3)(d-4)/2\)}}. (On the Hilbert scheme of space curves of degree {{\(d\)}} and genus {{\(g=(d-3)(d-4)/2\)}})},
 Journal = {Ann. Inst. Fourier},
 ISSN = {0373-0956},
 Volume = {50},
 Number = {6},
 Pages = {1671--1708},
 Year = {2000},
 Language = {French},
 DOI = {10.5802/aif.1804},
}

\bib{apery}{article}{
Author = {Ap{\'e}ry, Roger},
 Title = {Sur certains caract{\`e}res num{\'e}riques d'un id{\'e}al sans composant impropre},
 Journal = {C. R. Acad. Sci., Paris},
 ISSN = {0001-4036},
 Volume = {220},
 Pages = {234--236},
 Year = {1945},
 Language = {French},
}

\bib{babo}{article}{
author = {Ballico, Edoardo},
author = {Bolondi, Giorgio},
 Title = {Deficiency modules and specializations},
 Journal = {Proc. Am. Math. Soc.},
 ISSN = {0002-9939},
 Volume = {108},
 Number = {1},
 Pages = {43--48},
 Year = {1990},
 Language = {English},
 DOI = {10.2307/2047691},
}
\bib{dc}{article}{ 
 Author = {Chen, Dawei},
 Title = {Mori's program for the {Kontsevich} moduli space {{\(\overline{\mathcal M}_{0,0}(\mathbb P^3, 3)\)}}},
 Journal = {Int. Math. Res. Not.},
 ISSN = {1073-7928},
 Volume = {2008},
 Pages = {17},
 Note = {Id/No rnn067},
 Year = {2008},
 Language = {English},
 DOI = {10.1093/imrn/rnn067},
}

\bib{3264}{book}{ 
  author={Eisenbud, D.}, 
  author={Harris, J.},
  Title = {3264 and all that. {A} second course in algebraic geometry},
 ISBN = {978-1-107-60272-4; 978-1-107-01708-5; 978-1-139-06204-6},
 Year = {2016},
 Publisher = {Cambridge: Cambridge University Press},
 Language = {English},
 DOI = {10.1017/CBO9781139062046},
}

\bib{ell}{article}{ 
 Author = {Ellingsrud, Geir},
 Title = {Sur le sch{\'e}ma de {Hilbert} des vari{\'e}t{\'e}s de codimension 2 dans {{\(P^e\)}} {\`a} c{\^o}ne de {Cohen}-{Macaulay}},
 Journal = {Ann. Sci. {\'E}c. Norm. Sup{\'e}r. (4)},
 ISSN = {0012-9593},
 Volume = {8},
 Pages = {423--431},
 Year = {1975},
 Language = {French},
 DOI = {10.24033/asens.1297},
}

\bib{gaeta}{article}{
 Author = {Gaeta, Federico},
 Title = {Sulle curve sghembe algebriche di residuale finito},
 Journal = {Ann. Mat. Pura Appl. (4)},
 ISSN = {0373-3114},
 Volume = {27},
 Pages = {177--241},
 Year = {1948},
 Language = {Italian},
 DOI = {10.1007/BF02415567}
}

\bib{harhir}{article}{ 
  author={Hartshorne, R.},
  author={Hirschowitz, A.},
 Title = {Smoothing algebraic space curves},
 Year = {1985},
 Language = {English},
}

\bib{ct}{article}{ 
  author={Heinrich, K.},  
  author={Stevens, J.},
 Title = {The space of twisted cubics},
 Journal = {{\'E}pijournal de G{\'e}om. Alg{\'e}br., EPIGA},
 ISSN = {2491-6765},
 Volume = {5},
 Pages = {22},
 Note = {Id/No 10},
 Year = {2021},
 Language = {English},
}

\bib{yuke}{article}{ 
  author={Hu, Y.},
  author={Keel, S.},
 Title = {Mori dream spaces and {GIT}.},
 Journal = {Mich. Math. J.},
 ISSN = {0026-2285},
 Volume = {48},
 Pages = {331--348},
 Year = {2000},
 Language = {English},
 DOI = {10.1307/mmj/1030132722},
}

\bib{mig2}{book}{ Author = {Migliore, Juan C.},
Title = {Introduction to liaison theory and deficiency modules},
 Series = {Prog. Math.},
 ISSN = {0743-1643},
 Volume = {165},
 ISBN = {0-8176-4027-4},
 Year = {1998},
 Publisher = {Boston, MA: Birkh{\"a}user},
 Language = {English},
}

\bib{mdp2}{article}{ 
  author={Martin-Deschamps, M.},
  author={Perrin, D.},
 Title = {Sur la classification des courbes gauches(On classification of space curves)},
 Series = {Ast{\'e}risque},
 ISSN = {0303-1179},
 Volume = {184-185},
 Year = {1990},
 Publisher = {Paris: Soci{\'e}t{\'e} Math{\'e}matique de France},
 Language = {French},
 URL = {smf.emath.fr/system/files/filepdf/AST_1990__184-185__1_0.pdf}
}

\bib{mdp4}{article}{ 
  author={Martin-Deschamps, M.},
  author={Perrin, D.},
 Title = {Sur les bornes du module de Rao (On bounds of {Rao} module)},
 Journal = {C. R. Acad. Sci., Paris, S{\'e}r. I},
 ISSN = {0764-4442},
 Volume = {317},
 Number = {12},
 Pages = {1159--1162},
 Year = {1993},
 Language = {French},
}

\bib{mdp1}{article}{ 
  author={Martin-Deschamps, M.},
  author={Perrin, D.},
 Title = {Le sch{\'e}ma de Hilbert des courbes gauches localement Cohen-Macaulay n'est (presque) jamais r{\'e}duit (The {Hilbert} scheme of locally {Cohen}-{Macaulay} space curves is (almost) never reduced)},
 Journal = {Ann. Sci. {\'E}c. Norm. Sup{\'e}r. (4)},
 ISSN = {0012-9593},
 Volume = {29},
 Number = {6},
 Pages = {757--785},
 Year = {1996},
 Language = {French},
 DOI = {10.24033/asens.1753},
}

\bib{oka}{article}{ 
Author = {Okawa, Shinnosuke},
 Title = {On images of {Mori} dream spaces},
 Journal = {Math. Ann.},
 ISSN = {0025-5831},
 Volume = {364},
 Number = {3-4},
 Pages = {1315--1342},
 Year = {2016},
 Language = {English},
 DOI = {10.1007/s00208-015-1245-5},
}

\bib{perr}{thesis}{ 
Author = {Perrin, Daniel},
 Title = {Courbes passant par m points g{\'e}n{\'e}raux de {{\(P^ 3\)}}. (Curves passing through m general points of {{\(P^ 3)\)}}},
 Journal = {M{\'e}m. Soc. Math. Fr., Nouv. S{\'e}r.},
 ISSN = {0249-633X},
 Volume = {28/29},
 Pages = {137},
 Year = {1987},
 Language = {French},
 DOI = {10.24033/msmf.330},
}

\bib{pesk}{article}{ 
  author={Peskine, C.},
  author={Szpiro, L.},
 Title = {Liaison des vari{\'e}t{\'e}s alg{\'e}briques. {I}},
 Journal = {Invent. Math.},
 ISSN = {0020-9910},
 Volume = {26},
 Pages = {271--302},
 Year = {1974},
 Language = {French},
 DOI = {10.1007/BF01425554},
}

\bib{ps}{article}{ 
 title={On the Hilbert scheme compactification of the space of twisted cubics},
  author={Piene, R.},
  author={Schlessinger, M.},
 Journal = {Am. J. Math.},
 ISSN = {0002-9327},
 Volume = {107},
 Pages = {761--774},
 Year = {1985},
 Language = {English},
 DOI = {10.2307/2374355},
 URL = {semanticscholar.org/paper/d48808a9f9c6490c584ba84292ce6ae4929ead63}
}
\end{biblist} 
\end{bibdiv}

\end{document}